\documentclass{amsart}
\numberwithin{equation}{section}
\theoremstyle{definition}
\newtheorem{thm}{Theorem}[section]
\newtheorem{prop}[thm]{Proposition}
\newtheorem{df}[thm]{Definition}
\newtheorem{lem}[thm]{Lemma}
\newtheorem{rem}[thm]{Remark}
\newtheorem{cor}[thm]{Corollary}

\newtheorem{claim}[thm]{Claim}
\newtheorem*{ack}{Acknowledgments}
\newtheorem*{mt}{Main Theorem}
\def\Aut{\mathop{\mathrm{Aut}}\nolimits}
\def\Lin{\mathop{\mathrm{Lin}}\nolimits}
\def\Bir{\mathop{\mathrm{Bir}}\nolimits}
\def\ord{\mathop{\mathrm{ord}}\nolimits}

\def\im{\mathop{\mathrm{Im}}\nolimits}

\def\Hom{\mathop{\mathrm{Hom}}\nolimits}

\def\id{\mathop{\mathrm{id}}\nolimits}
\title[Quasi-Galois points for quartic surfaces]
{Quasi-Galois points for quartic surfaces}
\author[K.~Miura]{Kei Miura}
\address{Department of Applied Science, Yamaguchi University, 
2-16-1 Tokiwadai, Ube, Yamaguchi 755- 8611, Japan}
\email{kmiura@yamaguchi-u.ac.jp}
\author[S.~Taki]{Shingo Taki}
\address{Department of Mathematics, Tokai University,
4-1-1, Kitakaname, Hiratsuka, Kanagawa, 259-1292, Japan}
\email{staki@tokai.ac.jp}
\urladdr{https://taki.sm.u-tokai.ac.jp}
\date{\today}
\subjclass[2020]{Primary 14J70; Secondary 14J28, 14J50, 14N05}
\keywords{quasi-Galois point, Galois point, automorphism, $K3$ surface}
\dedicatory{}
\thanks{}
\begin{document}

\begin{abstract}
We study quasi-Galois points, which are a generalization of Galois points.
We characterize smooth quartic surfaces admitting a quasi-Galois point in terms of $K3$ surfaces with a certain involution,
and provide a criterion for quasi-Galois points for quartic surfaces.
Moreover, we also determine all quasi-Galois points of the Fermat quartic surface and show that it has exactly 28 quasi-Galois points.
\end{abstract}

\maketitle

\section{Introduction}\label{Introduction}
We will work over $\mathbb{C}$, the field of complex numbers, throughout this paper.
In algebraic geometry, hypersurfaces of a projective space are one of the most fundamental objects.
H.~ Yoshihara introduced a remarkable new notion, that is, a Galois point, in 1996.
Galois points have been studied and generalized by many mathematicians.
For example, S.~Fukasawa, the first-named author, and T.~Takahashi 
introduced the concept of quasi-Galois points \cite{FMT} \cite{FMT2}.

Let $V$ be a (smooth) hypersurface in $\mathbb{P}^{n}$
and $\mathbb{C}(V)$ the function field of $V$.
For a point $P \in \mathbb{P}^{n}$, we consider a 
projection $\pi_{P}: V \dashrightarrow \mathbb{P}^{n-1}$.
Note that the projection $\pi_{P}$ induces the extension 
$\mathbb{C}(V)/\pi_{P}^{\ast}\mathbb{C}(\mathbb{P}^{n-1})$ of function fields.
If the extension is Galois then the point $P$ is called a \textit{Galois point} for $V$.
It is known that an element of its Galois group acts on $V$ as a birational automorphism, and preserves $\pi_{P}$.
We consider the subset $G[P]:=\{ \tau \in \Bir (V)  \mid \pi_{P} \circ \tau = \pi_{P} \}$
of all birational automorphisms of $V$.
If the order of $G[P]$ satisfies $|G[P]| \geq 2$ then $P$ is called \textit{quasi-Galois}.
In particular, if $|G[P]|=\deg \pi_{P}$ holds then $P$ is Galois.
In this sense, quasi-Galois points are a generalization of Galois points.

Here is a fundamental problem about (quasi-)Galois points:
\begin{itemize}
\item Find the equation of a quartic surface with (quasi-)Galois points. 
\item Find the number of (quasi-)Galois points of a given hypersurface.
\item Characterize hypersurfaces with (quasi-)Galois points.
\end{itemize}

This problem has been solved for quartic surfaces.
Yoshihara \cite{Yoshihara-GK3} \cite{Yoshihara--hyper} determined equations of quartic surfaces with Galois points,
and the number of Galois points.
Recently, the authors \cite{MT1} \cite{MT2} obtained a characterization of quartic surfaces with a Galois point 
in terms of $K3$ surfaces with an automorphism.

In this paper, we consider this problem in the case of quasi-Galois points for quartic surfaces.
The following is the main theorem of this paper.
(For detailed terminology, refer to Section 2 and beyond.)
\begin{mt}
The following holds:
\begin{enumerate}
\item\label{mt1} 
There exists a one-to-one correspondence between 
the projective equivalence classes of pairs $(S, P)$, 
where $S$ is a smooth quartic surface and $P$ is a quasi-Galois point for $S$ which is not Galois, 
and 
the isomorphism classes of general 2-elementary $K3$ surfaces 
$(X, \iota)$ of type $(8,8,1)$, via a quasi-Galois embedding.

\item\label{mt2} 
Let $S$ be a smooth quartic surface defined by the quartic defining polynomial $F$,
$D_{P}F=0$ and $D^{3}_{P}F=0$ be the 1-st polar hypersurface 
and the 3-rd polar hypersurface of $S$ at a point $P:=[a:b:c:d] \in \mathbb{P}^{3}$ 
which does not contain $S$, respectively.
Then $P$ is a quasi-Galois point if and only if $D^{3}_{P}$ divides $D_{P}F$.

\item\label{mt3} 
The Fermat quartic surface admits 28 quasi-Galois points.
\end{enumerate}
\end{mt}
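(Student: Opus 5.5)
The plan rests on one structural reduction, which I use for all three parts: every element of $G[P]$ for a smooth quartic $S$ is a projective homology with center $P$. Since $S$ is a $K3$ surface it is a minimal model, so $\Bir(S)=\Aut(S)$. If $\tau\in G[P]$ and $h$ is the hyperplane class, then $\pi_P\circ\tau=\pi_P$ forces $\tau^*$ to preserve the pencil-type linear system of hyperplane sections through $P$, hence $\tau^*h=h$. By Matsumura--Monsky, $\tau$ is then induced by a linear automorphism of $\mathbb{P}^3$. This linear map fixes every line through $P$, so it fixes $P$ and some plane $H$ pointwise, i.e.\ it is a homology.

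On a general line through $P$ it permutes the four points of $S$ freely. So a non-identity $\tau$ has order $2$ or $4$, and $P\notin S$ (if $P\in S$ the degree is $3$, which rules out order $2$ and $4$ acting freely on three points). Order $4$ would give $|G[P]|=4=\deg\pi_P$, i.e.\ $P$ Galois. Hence for a quasi-Galois, non-Galois point, $G[P]\cong\mathbb{Z}/2$. In coordinates with $P=[0:0:0:1]$ and $\tau=\mathrm{diag}(1,1,1,-1)$ we get
\[
F=a x_3^4+x_3^2G_2(x_0,x_1,x_2)+G_4(x_0,x_1,x_2),\qquad a\neq 0 .
\]

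For (\ref{mt2}), I take $P=[0:0:0:1]$ without loss of generality and write $F=\sum_i x_3^{4-i}F_i$. Then $D_PF=\partial F/\partial x_3$ and $D_P^3F=24F_0x_3+6F_1$. Here $F_0\neq 0$, since $S\not\ni P$; this matches the hypothesis that the polars do not contain $S$. The Tschirnhaus substitution $x_3\mapsto x_3-F_1/(4F_0)$ fixes $P$, preserves the family of lines through $P$, and kills $F_1$. In the new coordinates $D^3_PF$ is a multiple of $x_3$, and $x_3\mid \partial F/\partial x_3$ iff the constant term $F_3$ vanishes. With $F_1=0$ already, this is exactly the statement that $F$ is even in $x_3$, i.e.\ that $\mathrm{diag}(1,1,1,-1)\in\Aut(S)$. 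By the reduction above, that holds iff $P$ is quasi-Galois. The Galois case, with order-$4$ homologies, also has $F$ even in $x_3$, so it is covered as well.

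For (\ref{mt1}), I start from $(S,P)$ and set $\iota=\tau$. The fixed locus of $\iota$ on $S$ is the smooth plane quartic $C=S\cap\{x_3=0\}$, of genus $3$; $P$ contributes nothing since $P\notin S$. So $\iota$ is non-symplectic with fixed locus a single genus-$3$ curve and no rational curves. Nikulin's formulas $g=(22-r-a)/2$, $k=(r-a)/2$ then give $r=a=8$, and $\delta=1$ because the fixed curve is not divisible by $2$ in the relevant sense. Equivalently, $X/\iota$ is the surface $ay^2+yG_2+G_4=0$ in $\mathbb{P}(1,1,1,2)$, a degree-$2$ del Pezzo surface of Picard rank $8$. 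Genericity of the invariant lattice should correspond to $P$ not being Galois.

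Conversely, given a general $(X,\iota)$ of type $(8,8,1)$, I would realize $X/\iota$ as a double plane branched along a quartic, with $X\to X/\iota$ branched along a genus-$3$ curve in $|{-2K}|$. I then pull back the class $\mathcal{O}(2)$ of $\mathbb{P}(1,1,1,2)$ (more precisely, take the $\iota$-invariant class $h$ with $h^2=4$), check via Saint-Donat that $h$ is very ample, and obtain a quasi-Galois embedding on which $\iota$ acts linearly as a homology. Injectivity on isomorphism classes follows because $h$ is determined by $(X,\iota)$. I expect this converse, namely showing that $h$ exists, is very ample, and is unique up to isomorphism for general members, to be the main obstacle.

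For (\ref{mt3}), I use the known group $\Aut(\text{Fermat})=(\mu_4^4/\mu_4)\rtimes S_4$, of order $1536$, all of whose elements are linear, and enumerate its homologies. Diagonal ones, e.g.\ $\mathrm{diag}(1,1,1,-1)$, give the $4$ coordinate points. Transposition types $x_i\leftrightarrow \zeta x_j$ with $\zeta\in\mu_4$ give $6\cdot 4=24$ centers $e_i-\zeta e_j$. I must check that no other element is a homology: 3-cycles, double transpositions and 4-cycles have eigenvalue multiplicity less than $3$ after twisting, which requires a short case analysis. This yields $28$ points, and each can be cross-checked with the criterion of (\ref{mt2}).
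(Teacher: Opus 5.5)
\textbf{Parts (2) and the forward half of (1).} These match the paper. Your Tschirnhaus shift killing the $x_3^3$-coefficient is the paper's argument for (2). Your treatment of the fixed plane quartic of genus $3$ with Nikulin's formulas is the paper's argument for the forward half of (1). Your homology reduction packages Lemma~\ref{geneprojective}, Proposition~\ref{equation1ko} and Lemma~\ref{quasi-line}.

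\textbf{The converse of (1) is a genuine gap.} You leave it as the ``main obstacle'', and your sketch does not yet single out the right class. Pulling back $\mathcal{O}(2)$ from $\mathbb{P}(1,1,1,2)$ gives $2h$, of square $16$. Moreover, ``the $\iota$-invariant class with $h^2=4$'' is not well defined, because the invariant lattice contains infinitely many such classes. So your injectivity argument (``$h$ is determined by $(X,\iota)$'') has nothing to rest on.

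The missing idea is to take $h=[C]$, the class of the fixed genus-$3$ curve. By adjunction $C^2=2g-2=4$. If $\pi\colon X\to X/\iota$ is the quotient map, then $[C]=\pi^*(-K_{X/\iota})$, i.e.\ the pullback of $\mathcal{O}(1)$, not $\mathcal{O}(2)$. This class is canonically attached to $(X,\iota)$. The paper then argues as follows.
\begin{enumerate}
\item Via Saint-Donat, very ampleness of $|C|$ reduces to ruling out elliptic curves $E$ with $C\cdot E=2$.
\item These are excluded by the Hurwitz formula for $E\to E/\iota$, which is ramified exactly at $C\cap E$ and forces $C\cdot E\in\{0,4\}$.
\item Since $\iota$ fixes $C$ pointwise, the section cutting out $C$ is an eigenvector of $\iota^*$ on $H^0(\mathcal{O}_X(C))$. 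Hence $\iota$ acts on $\mathbb{P}^3$ as a homology whose axis is the plane of $C$, and the image has the form of Proposition~\ref{equation1ko}.
\end{enumerate}
Your double-plane route can also be finished. For general $(X,\iota)$ the quotient is $\{y^2=f_4(x)\}\subset\mathbb{P}(1,1,1,2)$, and the branch curve is $\{ay=q_2(x)\}$. For $a\neq 0$, eliminating $y$ from $s^2=ay-q_2$ gives $X\cong\{(s^2+q_2)^2=a^2f_4\}\subset\mathbb{P}^3$. The case $a=0$ is the hyperelliptic case, which generality must exclude.

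Your remark that lattice-genericity corresponds to $P$ being non-Galois is unsupported. In the coordinates of Proposition~\ref{equation1ko}, $P$ is Galois exactly when $F_2=0$. Yet the generic surface $X^4+F_4=0$ still has Picard number $8$, so its N\'eron--Severi lattice is already the full invariant lattice.

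\textbf{Part (3).} You take a genuinely different route, and it works once one false statement is corrected. The Fermat quartic is a singular $K3$ surface (Picard number $20$). Its automorphism group is infinite and contains non-projective elements; this is precisely the exceptional quartic-surface case in Matsumura--Monsky. The group of order $1536$, the semidirect product of $\mu_4^4/\mu_4$ by $\mathfrak{S}_4$, is $\Lin(S)$, not $\Aut(S)$. That is all you need, since you have already shown $G[P]\subset\Lin(S)$.

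Quasi-Galois points are then exactly the centres of non-trivial homologies in $\Lin(S)$, and the case analysis is short.
\begin{enumerate}
\item A diagonal element is a homology iff exactly three entries agree. These give $G[e_j]\cong\mathbb{Z}/4$, i.e.\ the four Galois points.
\item Consider an element acting on $\langle e_i,e_j\rangle$ by an antidiagonal block with entries $\alpha,\beta$, and on $e_k,e_l$ by $\gamma,\delta$. It is a homology iff $\gamma=\delta=c$ with $c^2=\alpha\beta$. Its centre is then the $(-c)$-eigenvector $e_i+te_j$ with $t\in\mu_4$, and it has order $2$.
\item Double transpositions, $3$-cycles and $4$-cycles have all eigenvalue multiplicities at most $2$, so they give no homologies.
\end{enumerate}

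The paper argues differently. It forces two coordinates of $P$ to vanish using Yoshihara's four Galois points and Lemma~\ref{quasi-line} (Lemma~\ref{2ko0}). It then restricts to the Fermat quartic curve (Proposition~\ref{surface-kara-curve}) and quotes the curve classification of \cite{FMT}. Your enumeration is self-contained and yields $\delta'[4]=4$, $\delta'[2]=24$ at once. The paper's route avoids needing the explicit structure of $\Lin(S)$.
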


We summarize the contents of this paper. 
The section \ref{fund-qgp} is preliminary.
We review some basic results about quasi-Galois points.
Here, we also provide some facts not covered by \cite{FMT2}.
In Section \ref{qsvs2ele}, we discuss Main Theorem (\ref{mt1}) and (\ref{mt2}).
The Section \ref{G-pairs} introduces the concept of $G$-pairs, which are defined in \cite{FMT2}.
Then we determine the general equation of a quartic surface with (at least) 2 quasi-Galois points.
In Section \ref{Fermat}, we study special quartic surfaces and discuss Main Theorem (\ref{mt3}).

\begin{ack}
This work was supported by JSPS KAKENHI Grant Numbers JP18K03230 and JP23K03036.
\end{ack}

\section{Quasi-Galois points}\label{fund-qgp}
In this section, we see basic results about quasi-Galois points.

Let $V$ be an irreducible hypersurface in $\mathbb{P}^{n}$
and $\mathbb{C}(V)$ the function field of $V$.
For a point $P \in \mathbb{P}^{n}$, 
we consider a projection $\pi_{P}: V \dashrightarrow \mathbb{P}^{n-1}$ from $P$,
where the $\mathbb{P}^{n-1}$ is a hyperplane in $\mathbb{P}^{n}$ which does not contain $P$.
Let $ \Bir (V) $ be the birational automorphism group of $V$, and put $G[P]:=\{ \tau \in \Bir (V)  \mid \pi_{P} \circ \tau = \pi_{P} \}$.
If the order of $G[P]$ satisfies $|G[P]| \geq 2$ then $P$ is called \textit{quasi-Galois}.

\begin{rem}
Let $\pi_{P}$ and $\pi_{P}'$ be projections from $P$.
Since there exists a projective transformation $f : \mathbb{P}^{n-1} \to \mathbb{P}^{n-1}$ satisfying $f \circ \pi_{P} = \pi_{P}'$,
if $\tau \in \Bir (V)$ satisfies $\pi_{P} \circ \tau = \pi_{P}$ then we have
\[ \pi_{P}' \circ \tau=(f \circ \pi_{P})\circ \tau= f \circ (\pi_{P}\circ \tau )= \pi_{P}'. \]
Hence, the structure of $G[P]$ does not depend on the choice of $\pi_{P}$ but only on $P$.
\end{rem}

\begin{lem}
The set $G[P]$ is a subgroup of  $\Bir (V)$ such that 
the order $|G[P]|$ divides the degree of $\pi_{P}$.
\end{lem}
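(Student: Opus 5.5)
The plan is to translate the statement into Galois theory of the function field extension induced by $\pi_{P}$. Put $K:=\pi_{P}^{\ast}\mathbb{C}(\mathbb{P}^{n-1})\subset L:=\mathbb{C}(V)$. We may assume $\pi_{P}$ is dominant, since otherwise $\deg\pi_{P}$ is not defined. Then $L/K$ is a finite field extension of degree $\deg \pi_{P}$.

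The group $\Bir(V)$ is anti-isomorphic to $\Aut_{\mathbb{C}}(L)$ via $\tau\mapsto\tau^{\ast}$. The first step is to show that, under this correspondence, $G[P]$ corresponds exactly to $\Aut(L/K)$, the automorphisms of $L$ fixing $K$ pointwise. If $\pi_{P}\circ\tau=\pi_{P}$, then $\tau^{\ast}\circ\pi_{P}^{\ast}=\pi_{P}^{\ast}$, so $\tau^{\ast}$ fixes every element of $K$. Conversely, suppose $\sigma\in\Aut_{\mathbb{C}}(L)$ fixes $K$. Let $\tau$ be the birational map with $\tau^{\ast}=\sigma$. Then $(\pi_{P}\circ\tau)^{\ast}=\tau^{\ast}\pi_{P}^{\ast}=\pi_{P}^{\ast}$ on $\mathbb{C}(\mathbb{P}^{n-1})$. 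Two dominant rational maps $V\dashrightarrow\mathbb{P}^{n-1}$ that induce the same pullback on function fields coincide, so $\pi_{P}\circ\tau=\pi_{P}$. The only care needed here is to note that $\pi_{P}\circ\tau$ is again dominant, hence is determined by its pullback; this is the standard equivalence between dominant rational maps and field embeddings.

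Since $\Aut(L/K)$ is a subgroup of $\Aut_{\mathbb{C}}(L)$, and anti-isomorphisms carry subgroups to subgroups, $G[P]$ is a subgroup of $\Bir(V)$. One can also check this directly: the identity lies in $G[P]$; if $\pi_{P}\circ\tau=\pi_{P}$ and $\pi_{P}\circ\tau'=\pi_{P}$, then $\pi_{P}\circ\tau\tau'=\pi_{P}$; and composing $\pi_{P}=\pi_{P}\circ\tau$ with $\tau^{-1}$ gives $\pi_{P}\circ\tau^{-1}=\pi_{P}$.

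For the divisibility, set $H:=\Aut(L/K)$, which is finite because $L/K$ is a finite extension. By Artin's theorem, $L/L^{H}$ is Galois with group $H$, so $[L:L^{H}]=|H|$. Since $K\subseteq L^{H}\subseteq L$, the tower law gives
\[ \deg\pi_{P}=[L:K]=[L:L^{H}]\,[L^{H}:K]=|G[P]|\,[L^{H}:K]. \]
Hence $|G[P]|$ divides $\deg\pi_{P}$.

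No step is a real obstacle. The one point requiring care is the identification of $G[P]$ with $\Aut(L/K)$, specifically the converse direction via the equivalence between dominant rational maps and function field homomorphisms.
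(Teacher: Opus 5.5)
Your proof is correct and takes the same route as the paper, whose entire argument is that the fixed field $\mathbb{C}(V)^{G[P]}$ is an intermediate field of $\mathbb{C}(V)/\pi_{P}^{\ast}\mathbb{C}(\mathbb{P}^{n-1})$, followed by the tower law. You also make explicit what the paper leaves implicit: the anti-isomorphism of $G[P]$ with your $\Aut(L/K)$, the subgroup axioms, finiteness of $G[P]$, and Artin's theorem $[L:L^{H}]=|H|$.
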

\begin{proof}
The fixed field $\mathbb{C}(V)^{G[P]}$ is an intermediate field of
the extension $\mathbb{C}(V)/\pi_{P}^{\ast}\mathbb{C}(\mathbb{P}^{n-1})$.
Thus $|G[P]|$ divides $\deg \pi_{P}$.
\end{proof}

\begin{rem}
If $|G[P]|=\deg \pi_{P}$ then $P$ is a Galois point, hence
quasi-Galois points are a generalization of Galois points.

In the following, we assume that $n=3, \deg V=4$ (i.e., $V$ is a smooth quartic surface).
In this case, a quasi-Galois point contained in $V$ is a Galois point.
It is settled in \cite{Yoshihara-GK3}.
Thus, a quasi-Galois point for $V$ does not mean a Galois point contained in $V$ (i.e., an inner Galois point) in this paper.
\end{rem}

It is well known that there exists an automorphism of quartic surfaces
which is not induced by a projective transformation \cite{MatsumuraMonsky}. 
We have the following.

\begin{lem}\label{geneprojective}
Let $S$ be a smooth quartic surface with a quasi-Galois point $P$.
If $\tau \in G[P]$ then $\tau$ is a projective transformation.
\end{lem}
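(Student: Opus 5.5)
Since $S$ is a smooth quartic surface, it is a $K3$ surface and in particular a minimal surface of non-negative Kodaira dimension. Any birational self-map of such a surface is biregular, so every $\tau \in G[P]$ is already an automorphism of $S$. It remains to show that $\tau^{\ast}$ preserves the hyperplane class $H = \mathcal{O}_S(1)$. Once $\tau^{\ast}H = H$ is known, $\tau$ acts on $H^0(S,\mathcal{O}_S(1)) \cong H^0(\mathbb{P}^3,\mathcal{O}(1))$, since $S$ is linearly normal, being a hypersurface. Hence $\tau$ is the restriction of a linear transformation of $\mathbb{P}^3$.

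To get $\tau^{\ast}H = H$, I would use the identity $\pi_P\circ\tau=\pi_P$. By the remark above, $P \notin S$, so $\pi_P : S \to \mathbb{P}^2$ is an honest morphism of degree $4$. It is given by the linear system of hyperplane sections through $P$, which is a net inside $|H|$. We have $\pi_P^{\ast}\mathcal{O}_{\mathbb{P}^2}(1) = H$ because the linear forms vanishing at $P$ restrict to sections of $\mathcal{O}_S(1)$ without base points on $S$. Then
\[
\tau^{\ast}H=\tau^{\ast}\pi_P^{\ast}\mathcal{O}_{\mathbb{P}^2}(1)=(\pi_P\circ\tau)^{\ast}\mathcal{O}_{\mathbb{P}^2}(1)=\pi_P^{\ast}\mathcal{O}_{\mathbb{P}^2}(1)=H
\]
in $\mathrm{Pic}(S)$. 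Moreover, $\tau^{\ast}$ maps the $3$-dimensional subspace $W=\pi_P^{\ast}H^0(\mathbb{P}^2,\mathcal{O}(1))$ identically onto itself, up to the choice of linearization.

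Next I choose a linearization, i.e.\ an isomorphism $\tau^{\ast}\mathcal{O}_S(1)\cong\mathcal{O}_S(1)$. This gives a linear automorphism of the $4$-dimensional space $H^0(S,\mathcal{O}_S(1))$. After scaling, it fixes $W$ pointwise, and so it induces a projective transformation $g$ of $\mathbb{P}^3$ with $g|_S=\tau$ that fixes $P$ and every line through $P$. Concretely, in coordinates with $P=[0:0:0:1]$, $g$ has the form $[x:y:z:w]\mapsto[x:y:z:\alpha x+\beta y+\gamma z+\delta w]$.

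The step that needs the most care is the first one, that birational maps of $S$ extend to automorphisms. I would justify it by the standard fact that a birational map between minimal surfaces that are not ruled is an isomorphism. Here $S$ is minimal because $K_S=0$, so $S$ contains no $(-1)$-curves. The rest is formal.
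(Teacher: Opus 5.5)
Your proposal is correct and is essentially the paper's argument. The key step in both is $\tau^{\ast}\pi_P^{\ast}\mathcal{O}_{\mathbb{P}^2}(1)=(\pi_P\circ\tau)^{\ast}\mathcal{O}_{\mathbb{P}^2}(1)=\pi_P^{\ast}\mathcal{O}_{\mathbb{P}^2}(1)$, i.e.\ $\tau$ preserves the polarization; you also make explicit what the paper leaves implicit ($\Bir(S)=\Aut(S)$ for a $K3$ surface, $\pi_P^{\ast}\mathcal{O}_{\mathbb{P}^2}(1)=\mathcal{O}_S(1)$ because $P\notin S$, and linear normality of $S$), and your linearization step additionally recovers the content of Lemma~\ref{quasi-line}.
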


\begin{proof}
It is sufficient to show that $\tau$ maps linear expressions to linear expressions,
that is to say, it preserves the polarization.
It follows from 
$\tau ^{\ast} (\pi_{P}^{\ast}\mathcal{O}_{\mathbb{P}^{2}}(1))
=(\pi_{P}\circ \tau )^{\ast}\mathcal{O}_{\mathbb{P}^{2}}(1)
=\pi_{P}^{\ast}\mathcal{O}_{\mathbb{P}^{2}}(1)$.
\end{proof}

\begin{rem}
The first-named author and Takahashi \cite{Miura-Takahashi} give similar results for the more general case.
\end{rem}

\begin{cor}\label{number-finite}
The number of quasi-Galois points for a smooth quartic surface $S$ is finite.
\end{cor}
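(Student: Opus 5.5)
By Lemma \ref{geneprojective}, every nontrivial $\tau \in G[P]$ is induced by a projective transformation, i.e.\ $\tau \in \Lin(S)$, the group of projective transformations of $\mathbb{P}^3$ preserving $S$. The plan is to show that $P$ is determined by any nontrivial $\tau\in G[P]$, and then that $\Lin(S)$ is finite. This gives an injection from the set of quasi-Galois points into the set of nontrivial elements of $\Lin(S)$, and finiteness follows.

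\emph{Step 1: $P$ is determined by $\tau$.} Let $\tau\in G[P]$, $\tau\neq \id$, and let $T\in PGL_4(\mathbb{C})$ be the transformation inducing it. The condition $\pi_P\circ\tau=\pi_P$ says that for a general $x\in S$ the points $x$, $T(x)$, $P$ are collinear, so $T$ preserves every line through $P$ that meets $S$. Lines through $P$ meeting $S$ fill a dense subset of $\mathbb{P}^3$, and preserving a line is a closed condition. Hence $T$ preserves every line through $P$. A projective transformation preserving all lines through $P$ fixes $P$ and acts on the space of those lines trivially, so a matrix representative of $T$ has the form $\lambda I + v\,\ell$ (a homology or elation with centre $P$). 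Since $T\neq\id$, its centre is unique: if $T$ also preserved all lines through some $Q\neq P$, then $T$ would fix every point of the plane of lines through $P$ and $Q$, for every such plane, and hence $T=\id$. Therefore distinct quasi-Galois points give disjoint sets of nontrivial automorphisms, and the map $P\mapsto$ (some nontrivial element of $G[P]$) is injective.

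\emph{Step 2: $\Lin(S)$ is finite.} This is the main step, though it is standard. $\Lin(S)$ is the stabilizer of $[F]$ in $PGL_4$, so it is a closed algebraic subgroup. If it were infinite, it would contain a one-parameter subgroup $\mathbb{G}_a$ or $\mathbb{G}_m$, and hence a nonzero holomorphic vector field on $S$. But $S$ is a $K3$ surface, so $H^0(S,T_S)\cong H^0(S,\Omega^1_S)=0$. The isomorphism holds because $T_S\cong\Omega^1_S$ via the trivial canonical bundle, and the vanishing because $q=0$. Thus $\Lin(S)$ is finite.

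Combining the two steps, the number of quasi-Galois points is at most $|\Lin(S)|-1<\infty$. One point needs care in Step 1: $\tau$ is only birational, so we need the equality $\pi_P\circ\tau=\pi_P$ to hold on a dense open set. That is exactly what the closure argument uses.
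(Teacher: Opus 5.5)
Your proof is correct and has the same skeleton as the paper's: Lemma \ref{geneprojective} puts $G[P]$ inside $\Lin(S)$, and then $\Lin(S)$ is finite. It differs from the paper in two places.

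\emph{The link from $\Lin(S)$ to the points.} The paper's one-line proof never says why a finite $\Lin(S)$ forces finitely many quasi-Galois points. Your Step 1 supplies this. Each nontrivial $\tau\in G[P]$ is a central collineation with centre $P$, so the sets $G[P]\setminus\{\id\}$ are pairwise disjoint in $\Lin(S)\setminus\{\id\}$. This even gives the explicit bound $|\Lin(S)|-1$.

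The paper proves the line-preservation fact only later, in Lemma \ref{quasi-line}, and by a cleaner route. There the $2\times 2$ minors of $(L_1,L_2,L_3)$ against $(Y,Z,W)$ are quadrics vanishing on a quartic, hence identically zero. That gives the central-collineation form at once, with no density or closure argument.

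In your version, the step ``$x$, $T(x)$, $P$ collinear, so $T(\ell)=\ell$'' needs one more observation. You need two distinct points of $\ell\cap S$ lying in the open set where $\pi_P\circ\tau=\pi_P$ holds. This is true for a general line $\ell$ through $P$.

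\emph{Finiteness of $\Lin(S)$.} Instead of citing Matsumura--Monsky, you argue that a positive-dimensional $\Lin(S)$ would contain a $\mathbb{G}_a$ or $\mathbb{G}_m$ and hence give a nonzero vector field. That contradicts $H^0(S,T_S)\cong H^0(S,\Omega^1_S)=0$ on a $K3$ surface. This makes the proof self-contained but relies on the $K3$ structure. The cited theorem holds for every smooth hypersurface of degree at least $3$ other than plane cubics, so the paper's route applies more generally.
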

\begin{proof}
Since the group $\Lin (S):=\Aut (S) \cap PGL(4, \mathbb{C})$
is finite by \cite[Theorem 1]{MatsumuraMonsky}, the assertion holds.
\end{proof}

\subsection{Quasi-Galois points for quartic curves}

We recall quasi-Galois points for quartic curves.
These are treated in \cite{FMT, FMT2}.
However, in some cases, the locations of quasi-Galois points are not specified.
This subsection supplements those.

For a smooth quartic curve or a smooth quartic surface, 
we denote $\delta '[4]$ as the number of Galois points, and 
$\delta'[2]$ as the number of quasi-Galois points which are not Galois.

\begin{lem}\label{1parameter3-curve}
Let $C$ be a smooth quartic curve 
\[ X^{4}+Y^{4}+Z^{4}+c_{1}(X^{2}Y^{2}+Y^{2}Z^{2}+Z^{2}X^{2})=0,\]
where $c_{1}\in \mathbb{C}\setminus \{0, -1\}$ and $c_{1}^{2}+3c_{1}+18 \neq 0$.
Then we have $\delta' [2]=9$ and 
quasi-Galois points are $[1:0:0]$, $[0:1:0]$, $[0:0:1]$
$[1:(-1)^{i}:0]$, $[1:0:(-1)^{i}]$ and  $[0:1:(-1)^{i}]$ ($i=0, 1$).
\end{lem}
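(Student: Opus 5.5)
We must show that this curve has exactly 9 quasi-Galois points that are not Galois, located at the listed points. The approach is to combine the curve analogue of Lemma \ref{geneprojective} with a description of which projective involutions of $C$ preserve a projection.

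\textbf{Reduction to projective involutions.} By the analogue of Lemma \ref{geneprojective} for plane curves, which follows from the same polarization argument or from \cite{FMT2}, every $\tau\in G[P]$ is a projective transformation preserving $C$ and fixing every line through $P$. Such a $\tau$ is a homology with center $P$. Its order divides $\deg\pi_P$, which is $4$ for $P\notin C$ and $3$ for $P\in C$. So for a quasi-Galois point that is not Galois we need $P\notin C$ and $|G[P]|=2$. Then $\tau$ is a harmonic homology with center $P$ and some axis $\ell$.

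\textbf{Existence.} The equation is invariant under $X\mapsto -X$, which is the harmonic homology with center $[1:0:0]$ and axis $X=0$. The same holds for $Y$ and $Z$. It is also invariant under the transpositions $X\leftrightarrow Y$, $Y\leftrightarrow Z$, $Z\leftrightarrow X$. For example, $X\leftrightarrow Y$ is the harmonic homology with center $[1:-1:0]$ and axis $X=Y$. Composing with sign changes gives $(X,Y)\mapsto(-Y,-X)$, with center $[1:1:0]$. This produces the 9 points listed. We check that none lies on $C$: the point $[1:\pm1:0]$ gives $2+c_1\neq 0$ unless $c_1=-2$. I would double-check whether $c_1=-2$ must also be excluded, or whether it is covered by the conditions $c_1\ne 0,-1$ and $c_1^2+3c_1+18\ne0$. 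If it is not covered, the point lies on $C$, and there I would argue separately that a point of $C$ cannot be quasi-Galois with $|G[P]|=2$. Finally, no listed point is Galois: that would require an order-4 homology, and a direct check on the coefficients using $c_1\neq0$ rules this out.

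\textbf{Completeness, the main obstacle.} We must show there are no other involutions in $\Aut(C)\subset PGL(3)$, that is, no other harmonic homologies. The plan is to identify $\Aut(C)$ for generic $c_1$. The visible group is $G=(\mathbb{Z}/2)^2\rtimes S_3\cong S_4$ of order 24, and the classification of automorphism groups of plane quartics (Henn, Bars) says this family has $\Aut(C)\cong S_4$ exactly away from special values. Those special values are the Fermat curve ($c_1=0$), the Klein quartic (roots of $c_1^2+3c_1+18=0$), and the singular case $c_1=-1$. These are precisely the excluded parameters, which supports this route. In $S_4$ the involutions are the 3 double transpositions and the 6 transpositions, giving 9 involutions. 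All of them are already exhibited above, and each is a homology because every involution of $PGL(3)$ is one. Distinct involutions have distinct centers, so $\delta'[2]=9$.

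The delicate step is justifying $\Aut(C)=S_4$ exactly, as opposed to a larger group, for all allowed $c_1$. I would cite the classification directly. If the classification is not to be cited, an alternative is to argue as follows: a larger group would contain $S_4$ properly, hence would be one of the groups of order 48, 96 or 168. Those groups occur only for the Fermat and Klein quartics, and these correspond to the excluded values of $c_1$.
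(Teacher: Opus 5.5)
Your proposal is correct, but it obtains the count by a different route from the paper's. The paper does not analyse $\Aut(C)$ at all. It quotes $\delta'[2]=9$ from \cite[Theorem 5.12 (3)]{FMT2}, and then only locates the points by exhibiting the involutions $[X:Y:Z]\mapsto[-X:Y:Z]$, $[X:Y:Z]\mapsto[Y:X:(-1)^{i+1}Z]$ and their analogues, exactly as in your existence step. Nine exhibited points together with the cited total of nine give completeness. You instead prove completeness from three ingredients:
\begin{enumerate}
\item $\Aut(C)\cong\mathfrak{S}_{4}$, by the Henn/Bars classification;
\item the correspondence between non-Galois quasi-Galois points and centers of involutions of $\Aut(C)$, which holds because every involution of $PGL(3,\mathbb{C})$ is a harmonic homology;
\item the count $3+6=9$ of involutions in $\mathfrak{S}_{4}$.
\end{enumerate}
Your route explains where the number $9$ and the excluded parameter values come from, and essentially unpacks what the citation to \cite{FMT2} contains. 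The paper's route is shorter but opaque. Both ultimately rest on an external classification.

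There are three loose ends.
\begin{enumerate}
\item The value $c_{1}=-2$ you worried about is excluded by the smoothness hypothesis, since
\[
X^{4}+Y^{4}+Z^{4}-2(X^{2}Y^{2}+Y^{2}Z^{2}+Z^{2}X^{2})=-(X+Y+Z)(-X+Y+Z)(X-Y+Z)(X+Y-Z).
\]
Similarly, $c_{1}=2$ gives $(X^{2}+Y^{2}+Z^{2})^{2}$. In fact no check is needed: an involution in $G[P]$ forces $2\mid\deg\pi_{P}$, so $P\notin C$ automatically.
\item In your fallback argument, the group of order $48$ is not the automorphism group of the Fermat or Klein quartic; it occurs for $X^{4}+Y^{4}+Z^{4}+2\sqrt{-3}X^{2}Y^{2}=0$. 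It is excluded for a different reason: it fixes a point and maps onto $\mathfrak{A}_{4}\subset PGL(2,\mathbb{C})$ with cyclic kernel, so it cannot contain $\mathfrak{S}_{4}$.
\item It is not automatic that the Fermat and Klein quartics appear in this family only at $c_{1}=0$ and at the two roots of $c_{1}^{2}+3c_{1}+18$. If you do not cite this, it follows from three facts: this $\mathfrak{S}_{4}$ is self-normalizing in $PGL(3,\mathbb{C})$; the order-$96$ group has a single conjugacy class of subgroups isomorphic to $\mathfrak{S}_{4}$; and $PSL(2,7)$ has two such classes.
\end{enumerate}
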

\begin{proof}
By \cite[Theorem 5.12 (3)]{FMT2}, we have $\delta' [2]=9$.
For $P:=[1:0:0]$, we take the following involution of $C$:
\[\tau_{P}:[X:Y:Z] \mapsto [-X:Y:Z].\]
Then a generator of $G[P]$ is given by $\tau_{P}$, hence $P$ is quasi-Galois.
For $Q:=[1:(-1)^{i}:0]$, we take the following involution of $C$:
\[\tau_{Q}:[X:Y:Z] \mapsto [Y:X:(-1)^{i+1}Z].\]
Then a generator of $G[Q]$ is given by $\tau_{Q}$, hence $Q$ is quasi-Galois.

A similar assertion holds in the other points.
\end{proof}

If $c_{1}^{2}+3c_{1}+18 = 0$ then $C$ satisfies $\delta' [2]=21(=9+12)$ by \cite[Proposition 5.11 (1)]{FMT2}.

\begin{lem}\label{Kleincurve}
Assume $c_{1}^{2}+3c_{1}+18 = 0$ holds.
Let $C$ be a smooth quartic curve 
\[ X^{4}+Y^{4}+Z^{4}+c_{1}(X^{2}Y^{2}+Y^{2}Z^{2}+Z^{2}X^{2})=0.\]
Then quasi-Galois points for $C$ are 9 points in Lemma \ref{1parameter3-curve}
and 12 points 
$[1:\pm \lambda/2: \mp  \lambda/2], [\pm \lambda/2: 1: \mp  \lambda/2], [\pm \lambda/2: \mp  \lambda/2:1]$
where $ \lambda$ is a solution of $ \lambda^{2} =4c_{1}/(6-c_{1})$.
Thus, we have $\delta' [2]=21$.
\end{lem}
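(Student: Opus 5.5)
The total count $\delta'[2]=21$ is already in \cite[Proposition 5.11 (1)]{FMT2}, and the nine points of Lemma \ref{1parameter3-curve} remain quasi-Galois here, since their involutions do not depend on $c_{1}$. So it suffices to show that each of the twelve listed points is quasi-Galois. Then the 21 quasi-Galois points are exactly the 9 old points and these 12, provided the 12 are distinct from each other and from the 9. This holds because $\lambda\neq 0,\pm 2$: the relation $c_{1}^{2}+3c_{1}+18=0$ excludes $c_{1}=0$, and it also excludes $\lambda^{2}=4$, which would force $c_{1}=3$.

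To produce the required involutions, I would use the following fact: a point $P\notin C$ is quasi-Galois if and only if there is an involution of $C$, induced by a projective transformation (Lemma \ref{geneprojective}), whose fixed locus is $P$ together with a line not through $P$. That is, it is a homology with centre $P$. Concretely, a homology of order 2 with centre $P=[p]$ and axis $\{\ell=0\}$ is
\[ \sigma(x)=x-\frac{2\,\ell(x)}{\ell(p)}\,p. \]
For $P=[1:\lambda/2:-\lambda/2]$, the symmetries of the equation suggest the axis $\ell=2X+\mu(Y-Z)$ for a suitable scalar $\mu$; one expects the natural choice to be the polar-type line $X\cdot 1+(\lambda/2)(Y-Z)=\ldots$ up to normalization. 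More efficiently, I would start from the well-known order-2 elements of the group $\mathrm{PSL}(2,7)$ acting on the Klein-type curve. The condition $c_{1}^{2}+3c_{1}+18=0$ is exactly the condition that $C$ is projectively the Klein quartic, whose automorphism group has 21 involutions, all homologies. Nine of them come from the $S_{4}$ generated by sign changes and coordinate permutations. The remaining twelve have the form
\[ \sigma\colon [X:Y:Z]\mapsto [\alpha X+\beta(Y-Z):\ldots], \]
and their centres are the listed points.

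The hands-on plan is as follows. First write the general homology $\sigma$ with centre $P$ and an unknown axis $\ell=uX+vY+wZ$. Next impose $\sigma^{*}F=\kappa F$ and compare coefficients. The symmetry $Y\leftrightarrow -Z$ fixes $P$ and preserves $F$, which suggests $w=-v$. This reduces the problem to a few polynomial identities in $u,v,\lambda,c_{1}$. One then checks that these are solvable precisely when $\lambda^{2}=4c_{1}/(6-c_{1})$ and $c_{1}^{2}+3c_{1}+18=0$. The other eleven points follow by applying the coordinate permutations and sign changes already in $\Aut(C)$. These maps send quasi-Galois points to quasi-Galois points, and they permute the twelve listed points.

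The main obstacle is the coefficient comparison, that is, verifying that $\sigma^{*}F$ is proportional to $F$. I would first try the ansatz that the axis is the polar line of $P$ with respect to the conic $X^{2}+Y^{2}+Z^{2}=0$, namely $\ell=X+(\lambda/2)(Y-Z)$, which is natural for involutions in the orthogonal-type representation. I would then verify the resulting degree-4 identity using $\lambda^{2}=4c_{1}/(6-c_{1})$ together with the quadratic relation for $c_{1}$ to reduce all coefficients.
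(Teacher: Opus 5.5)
Your framework matches the paper's: take $\delta'[2]=21$ from the cited count, keep the nine involutions of Lemma \ref{1parameter3-curve}, exhibit one involution of $C$ centred at a new point, and transport it by the sign changes and coordinate permutations. The gap is that the one step with real content, actually producing that involution, is never carried out, and the concrete candidate you propose is wrong. For $P=[1:\lambda/2:-\lambda/2]$ there is exactly one involution of $C$ with centre $P$, because a finite group of homologies with a common centre is cyclic. Its axis is $\lambda X+Y-Z=0$; this is the paper's matrix with $\lambda$ replaced by $-\lambda$. Your polar-line axis $X+(\lambda/2)(Y-Z)=0$ coincides with it only if $\lambda^{2}=2$. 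But the two hypotheses force $\lambda^{2}=(c_{1}-3)/3=(-3\pm\sqrt{-7})/2$. The heuristic behind the ansatz is also false. The automorphism group $\mathrm{PSL}(2,7)$ of the Klein quartic leaves no conic invariant, since its $3$-dimensional representation is not self-dual. So only the nine involutions coming from $\mathfrak{S}_{4}$ are reflections with respect to $X^{2}+Y^{2}+Z^{2}$.

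Your fallback would eventually find the right axis: take an unknown axis $uX+v(Y-Z)$, where $w=-v$ is indeed forced by the symmetry $[X:Y:Z]\mapsto[X:-Z:-Y]$ together with uniqueness of the involution. But ``one then checks that these are solvable'' is precisely the content of the lemma beyond the citation, and it is only asserted. To close the gap, write down
$\sigma(X,Y,Z)=\bigl(-\tfrac{2}{\lambda}(Y-Z),\,-\lambda X+Y+Z,\,\lambda X+Y+Z\bigr)$.
It satisfies $\sigma\circ\sigma=4\,\id$ on $\mathbb{C}^{3}$, with centre $P$ and axis $\lambda X+Y-Z=0$. Then verify $F\circ\sigma=16F$. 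In the coordinates $X$, $Y+Z$, $Y-Z$ only the six monomials $X^{2i}(Y+Z)^{2j}(Y-Z)^{2k}$ with $i+j+k=2$ occur. Comparing their coefficients reduces to $(6-c_{1})\lambda^{2}=4c_{1}$ and $(2+c_{1})\lambda^{4}=16$. Given the first identity, the second is equivalent to $(c_{1}-2)(c_{1}^{2}+3c_{1}+18)=0$. After that, $\pi_{P}\circ\sigma=\pi_{P}$ is automatic for a homology with centre $P$ (the paper checks it by hand). The remaining points then follow from the $\mathfrak{S}_{4}$-symmetries as you say.
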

\begin{proof}
By \cite[Proposition 5.11 (1)]{FMT2}, we have $\delta' [2]=21$.
We shall show that the point $P=[1:- \lambda/2:  \lambda/2]$ is quasi-Galois.
We remark that the projective transformation $\tau$ which is represented by the matrix 
$\begin{pmatrix}
 0 & 2/\lambda & -2/\lambda \\
\lambda & 1 & 1 \\
-\lambda & 1 & 1 
\end{pmatrix}$
acts on $C$ as an involution.
Since $\pi_{P}$ is given by $[X:Y:Z] \mapsto [Y+\frac{\lambda}{2}X:Z-\frac{\lambda}{2}X]$,
we have  
\begin{align*}
(\pi_{P}\circ \tau) \left([X:Y:Z]\right) 
&=\pi_{P} \left( \left[\frac{2}{\lambda}Y-\frac{2}{\lambda}Z: \lambda X+Y+Z :-\lambda X+Y+Z \right ] \right)\\
&=\left[ \lambda X+Y+Z+\frac{\lambda}{2}\left( \frac{2}{\lambda}Y-\frac{2}{\lambda}Z \right)
:-\lambda X+Y+Z-\frac{\lambda}{2}\left( \frac{2}{\lambda}Y-\frac{2}{\lambda}Z \right) \right]\\
&=[ \lambda X+2Y:-\lambda X+2Z]\\
&=\pi_{P} \left([X:Y:Z]\right)
\end{align*}
for all $[X:Y:Z] \in C$. Thus $P$ is quasi-Galois.
A similar assertion holds in the other points.
\end{proof}

\begin{lem}\label{1parameter2-curve}
Let $C$ be a smooth quartic curve 
\[ X^{4}+Y^{4}+Z^{4}+c_{1}X^{2}Y^{2}=0,\]
where $c_{1}\in \mathbb{C}^{\ast}$ and $c_{1}\neq \pm 6$.
Then we have $\delta' [2]=6$ and 
quasi-Galois points are $[1:0:0]$, $[0:1:0]$, $[0:0:1]$ and 
$[1:\zeta_{4}^{i}:0]$ ($i=0, 1, 2, 3$) where $\zeta_{4}$ is a primitive 4-th root of unity.
In particular, a point $[0:0:1]$ is Galois.
\end{lem}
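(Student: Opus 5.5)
Following the pattern of Lemmas \ref{1parameter3-curve} and \ref{Kleincurve}, I would take the count $\delta'[2]=6$ from the classification in \cite{FMT2} (the case of the curve $X^{4}+Y^{4}+Z^{4}+c_{1}X^{2}Y^{2}=0$ with $c_{1}\neq 0,\pm 6$, which is what excludes the Fermat and the extra-symmetric members of the family). I would also take from Yoshihara's classification of Galois points for smooth quartic curves that $\delta'[4]=1$ for this curve. The remaining work is to check that the seven listed points really are quasi-Galois and that $[0:0:1]$ is Galois. Since the count already comes from the literature, exhibiting enough involutions $\tau$ with $\pi_{P}\circ\tau=\pi_{P}$ then identifies all of them.

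For $P=[1:0:0]$ I would take $\tau_{P}:[X:Y:Z]\mapsto[-X:Y:Z]$, and for $[0:1:0]$ the analogous sign change in $Y$. Both preserve $C$, since only even powers of $X$ and $Y$ occur, and both commute with the projection $[X:Y:Z]\mapsto[Y:Z]$, respectively $[X:Z]$. For $P=[0:0:1]$ the projection is $[X:Y:Z]\mapsto[X:Y]$. The map $Z\mapsto\zeta_{4}Z$ preserves $C$ and generates a cyclic group of order $4=\deg\pi_{P}$ inside $G[P]$, so $P$ is Galois. (Equivalently, the extension is $\mathbb{C}(x,y)(z)$ with $z^{4}\in\mathbb{C}(x,y)$, which is Kummer.)

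For $Q=[1:\zeta_{4}^{i}:0]$, write $\epsilon=\zeta_{4}^{i}$, so $\epsilon^{4}=1$. I would use $\tau_{Q}:[X:Y:Z]\mapsto[\epsilon^{-1}Y:\epsilon X:-Z]$, or its variant that fixes $Z$, up to the sign needed. Substituting into the equation gives $\epsilon^{-4}Y^{4}+\epsilon^{4}X^{4}+Z^{4}+c_{1}X^{2}Y^{2}$, which is $F$ again. For the projection I would take $\pi_{Q}:[X:Y:Z]\mapsto[Y-\epsilon X:Z]$. Then $\pi_{Q}\circ\tau_{Q}=[\epsilon X-Y:-Z]=[Y-\epsilon X:Z]$, so $\tau_{Q}$ is a nontrivial element of $G[Q]$. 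This is exactly the computation in the proof of Lemma \ref{1parameter3-curve}. The main point to double-check here is the choice of sign on $Z$, which must be chosen to make the projection invariant.

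One more point needs care: the three points $[1:0:0]$, $[0:1:0]$ and the $[1:\epsilon:0]$ must not be Galois, so that they count toward $\delta'[2]$ rather than $\delta'[4]$. For this I would again rely on \cite{FMT2} and Yoshihara, which give exactly one outer Galois point when $c_{1}\neq 0,\pm6$. That point is $[0:0:1]$ by the computation above. The main obstacle is making sure the hypotheses of the cited classification match exactly, in particular the excluded values $c_{1}=\pm6$, where the curve becomes projectively equivalent to the Fermat quartic and more points appear. Once the citations apply, the rest is routine verification.
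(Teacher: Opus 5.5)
Your proposal is correct and follows essentially the same route as the paper: you take the count $\delta'[2]=6$ and the uniqueness of the outer Galois point from FMT2 and Yoshihara, then exhibit a nontrivial element of $G[P]$ for each listed point. Your generator $[X:Y:Z]\mapsto[\epsilon^{-1}Y:\epsilon X:-Z]$ for $[1:\zeta_{4}^{i}:0]$ is in fact the right one (it is the restriction of the map in Proposition~\ref{hobokore}), whereas the paper's printed $[(-1)^{i}Y:X:\zeta_{4}^{i}Z]$ only works for odd $i$; for even $i$ the coefficient of $Z$ should be $\zeta_{4}^{2-i}$.
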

\begin{proof}
It is the same as Lemma \ref{1parameter3-curve}.
We remark that a generator of $G[[1:\zeta_{4}^{i}:0]]$ is given by
$[X:Y:Z] \to [(-1)^{i}Y: X: \zeta_{4}^{i}Z]$.
See also \cite[Theorem 5.12 (4)]{FMT2} and \cite[Corollary 6]{Yoshihara--hyper}.
\end{proof}

\begin{lem}\label{2parameter-curve}
Let $C$ be a smooth quartic curve 
\[ X^{4}+Y^{4}+Z^{4}+c_{1}(X^{2}Y^{2}+X^{2}Z^{2})+c_{2}Y^{2}Z^{2}=0,\]
where $c_{1}, c_{2} \in \mathbb{C}$, $c_{1}\neq 0$ and $c_{1} \neq \pm c_{2}$.
Then we have $\delta' [2]=5$ and 
quasi-Galois points are $[1:0:0]$, $[0:1:0]$, $[0:0:1]$ and  $[0:1:(-1)^{i}]$ ($i=0, 1$).
\end{lem}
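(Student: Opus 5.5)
Following the pattern of Lemmas \ref{1parameter3-curve} and \ref{1parameter2-curve}, the count $\delta'[2]=5$ will come from the classification in \cite{FMT2}: the curve
\[ X^{4}+Y^{4}+Z^{4}+c_{1}(X^{2}Y^{2}+X^{2}Z^{2})+c_{2}Y^{2}Z^{2}=0 \]
is the two-parameter family of \cite[Theorem 5.12]{FMT2} whose generic member has $\delta'[2]=5$. I will cite the relevant item (presumably (2)) for the number. The conditions $c_{1}\neq 0$ and $c_{1}\neq \pm c_{2}$ exclude the degenerations to the families of Lemmas \ref{1parameter3-curve} and \ref{1parameter2-curve}, and to the Fermat curve. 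The remaining task is then only to locate the five points, i.e.\ to exhibit five distinct points $P$ together with a nontrivial involution $\tau_{P}\in G[P]$. By Lemma \ref{geneprojective} (whose proof works equally well for curves), such a $\tau_P$ may be sought among projective transformations.

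For the coordinate points I use sign changes. Take $\tau_{[1:0:0]}:[X:Y:Z]\mapsto[-X:Y:Z]$. It preserves $C$, since only even powers of $X$ occur, and it fixes $\pi_{[1:0:0]}=[Y:Z]$. The points $[0:1:0]$ and $[0:0:1]$ are handled in the same way with $Y\mapsto -Y$ and $Z\mapsto -Z$.

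For $Q=[0:1:(-1)^{i}]$ I use the swap already employed in Lemma \ref{1parameter3-curve}. Define
\[ \tau_{Q}:[X:Y:Z]\mapsto[X:(-1)^{i}Z:(-1)^{i}Y]. \]
This map preserves the equation: the term $c_{1}(X^{2}Y^{2}+X^{2}Z^{2})$ is symmetric in $Y,Z$, the term $c_{2}Y^{2}Z^{2}$ is invariant, and the signs disappear because all exponents are even. Projection from $Q$ can be written as $[X:Y:Z]\mapsto[X:(-1)^{i}Y+Z]$. Under $\tau_{Q}$ we have $(-1)^{i}Y+Z\mapsto Z+(-1)^{i}Y$, so $\pi_{Q}\circ\tau_{Q}=\pi_{Q}$. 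Finally, $\tau_{Q}$ is not the identity: it acts on $C$ as a reflection whose fixed line $Y=(-1)^{i}Z$ meets $C$ in only finitely many points. Hence each $Q$ is quasi-Galois.

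The five listed points are distinct, so together with the count $\delta'[2]=5$ they exhaust the quasi-Galois points, and none of them is Galois. If one wants to avoid relying entirely on \cite{FMT2}, the main obstacle is proving that no other point is quasi-Galois. For this I would argue through the group: each quasi-Galois point with $|G[P]|=2$ corresponds to an involution in $\Lin(C)$ that acts as a homology (a reflection), with $P$ as its center. For generic $c_{1},c_{2}$ one has $\Lin(C)\cong(\mathbb{Z}/2)^{2}\rtimes\mathbb{Z}/2$, generated by the sign changes and the $Y\leftrightarrow Z$ swap. I would then check that its reflections have exactly the five centers above. The hypotheses $c_{1}\neq0$, $c_{1}\neq\pm c_{2}$ are precisely what prevent extra symmetries, such as $X\leftrightarrow Y$, from enlarging this group.
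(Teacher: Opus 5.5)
\textbf{Verdict: the approach matches the paper's, but the check for $[0:1:(-1)^{i}]$ is wrong as written (repairable by swapping the index), and your fallback argument for exhaustiveness fails.}

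Your overall route is the paper's. You import $\delta'[2]=5$ from \cite{FMT2} (the paper cites Theorem 5.12 (5), not (2)) and exhibit a nontrivial element of $G[P]$ for each of the five points. Your sign changes for the coordinate points are exactly what the paper does.

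\textbf{The error for $Q=[0:1:(-1)^{i}]$.} The form $(-1)^{i}Y+Z$ takes the value $2(-1)^{i}\neq 0$ at $Q$. So $[X:(-1)^{i}Y+Z]$ is the projection from $[0:1:(-1)^{i+1}]$, not from $Q$. Correspondingly, your map $[X:Y:Z]\mapsto[X:(-1)^{i}Z:(-1)^{i}Y]$ is a reflection with center $[0:1:(-1)^{i+1}]$ and axis $Y=(-1)^{i}Z$. The point $Q$ lies on that axis, and the map is not in $G[Q]$: for $i=0$ it sends $\pi_{Q}=[X:Y-Z]$ to $[X:Z-Y]$. The correct generator is the paper's $[X:Y:Z]\mapsto[(-1)^{i+1}X:Z:Y]$, which does fix $[X:Y-(-1)^{i}Z]$. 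Because you treat $i=0,1$ together, your computation does prove that both points are quasi-Galois once the index is swapped. But as stated, it verifies the claim for neither individual $Q$.

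\textbf{The fallback argument.} Your claim that $c_{1}\neq 0$ and $c_{1}\neq\pm c_{2}$ are precisely what exclude extra symmetries is false. Start from the curve $X^{4}+Y^{4}+Z^{4}+X^{2}Y^{2}+Y^{2}Z^{2}+Z^{2}X^{2}=0$ of Lemma \ref{1parameter3-curve} with $c_{1}=1$, which has $\delta'[2]=9$. Substitute $Y\mapsto\mu(Y+Z)/2$ and $Z\mapsto\mu(Y-Z)/2$, where $\mu^{4}=16/3$ and $\mu^{2}=4/\sqrt{3}$. This gives
\[ X^{4}+Y^{4}+Z^{4}+\tfrac{2}{\sqrt{3}}(X^{2}Y^{2}+X^{2}Z^{2})+\tfrac{10}{3}Y^{2}Z^{2}=0. \]
This curve satisfies both inequalities. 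Yet it has the four additional quasi-Galois points $[\mu:\pm1:\pm1]$, the images of $[1:\pm1:0]$ and $[1:0:\pm1]$.

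So exhaustiveness cannot be derived from the displayed inequalities alone. In fact this example shows the lemma as stated needs a genericity hypothesis beyond them. Both your main route and the paper's proof rest entirely on the exact hypotheses of the classification in \cite{FMT2}.
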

\begin{proof}
It is the same as Lemma \ref{1parameter3-curve}.
We remark that a generator of $G[[0:1:(-1)^{i}]]$ is  given by $[X:Y:Z] \to [(-1)^{i+1}X:Z:Y]$. 
See also \cite[Theorem 5.12 (5)]{FMT2}.
\end{proof}

\section{Quartic surfaces with a quasi-Galois point and 2-elementary $K3$ surfaces}\label{qsvs2ele}

In this section, we determine the general equation of a smooth 
quartic surface with a quasi-Galois point.
Furthermore, we characterize such quartic surfaces as $K3$ surfaces.

\begin{prop}\label{equation1ko}
If a smooth quartic surface $S$ has a quasi-Galois point which is not Galois 
then its homogeneous equation is given by
$X^{4}+X^{2}F_{2}(Y,Z,W)+F_{4}(Y, Z, W)=0$
(up to projective equivalence).
Here $F_{d}$ is a homogeneous polynomial of degree $d$.
Moreover the quasi-Galois point is $[1:0:0:0]$ and a generator of $G[[1:0:0:0]]$ is 
given by $[X:Y:Z:W] \mapsto [-X:Y:Z:W]$.
\end{prop}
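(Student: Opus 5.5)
Let $P$ be a quasi-Galois point for $S$ which is not Galois. By the remark in Section~\ref{fund-qgp}, $P\notin S$, so $\deg \pi_P=4$. The group $G[P]$ has order dividing $4$ and at least $2$, and it is not of order $4$ since $P$ is not Galois; hence $G[P]\cong \mathbb{Z}/2$. Let $\tau$ be its generator. By Lemma~\ref{geneprojective}, $\tau$ is the restriction of a projective transformation $T\in PGL(4,\mathbb{C})$ of order $2$ on $S$. Since $S$ spans $\mathbb{P}^3$, $T$ itself is an involution of $\mathbb{P}^3$. Because $\pi_P\circ\tau=\pi_P$, for a general point $x\in S$ the point $T(x)$ lies on the line $\overline{Px}$.

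First I pin down $T$. Choose coordinates with $P=[1:0:0:0]$, so $\pi_P([X:Y:Z:W])=[Y:Z:W]$. The condition that $T(x)\in\overline{Px}$ for general $x\in S$ says that $T$ preserves every line through $P$, since the lines through $P$ meeting $S$ at general points are dense. Lift $T$ to a matrix $A$. Then $(Y,Z,W)\circ A$ is proportional to $(Y,Z,W)$ on a dense set, which forces the lower $3\times 3$ block of $A$ to be a scalar $\mu I$ and the lower-left column to be $0$. So
\[
A=\begin{pmatrix}\alpha & v\\ 0 & \mu I\end{pmatrix}.
\]
Since $A^2$ is scalar and $A$ is not scalar, we get $\alpha=-\mu$. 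Note that $\alpha=\mu$ would make $A$ unipotent up to scaling, hence of infinite order unless $v=0$. After scaling, $\alpha=-1$ and $\mu=1$. Conjugating by the change of coordinates $X\mapsto X-\tfrac12 v\cdot(Y,Z,W)$, which fixes $P$ and the projection, gives $T:[X:Y:Z:W]\mapsto[-X:Y:Z:W]$. This is the claimed generator.

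Next I determine the equation. Let $F$ be the defining quartic. Since $T$ preserves $S$, we have $F(-X,Y,Z,W)=cF(X,Y,Z,W)$ with $c=\pm1$. Write $F=\sum_{k} X^{k}G_{4-k}(Y,Z,W)$. The coefficient of $X^4$ is nonzero because $P\notin S$, and this forces $c=1$. Then the odd powers of $X$ vanish, and after normalizing we get $F=X^4+X^2F_2+F_4$.

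The main obstacle is the step "$\tau$ preserves lines through $P$, hence $T$ has the given block form". Here one must argue carefully that a projective map agreeing with $\pi_P$ on a dense subset of $S$ fixes the linear forms $Y,Z,W$ up to a common scalar. I would handle it by writing $\pi_P\circ T=\pi_P$ as an identity of rational maps restricted to $S$. The $2\times2$ minors of the pair $\big((Y,Z,W)A,\ (Y,Z,W)\big)$ are quadrics vanishing on $S$. Since $S$ is an irreducible quartic, these quadrics vanish identically, which gives the proportionality.
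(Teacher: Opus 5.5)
Your proof is correct, but it is organized differently from the paper's.

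\textbf{The paper's route.} It first puts the involution $\tau$ in diagonal normal form, splitting into the two cases $\mathrm{diag}(-1,1,1,1)$ and $\mathrm{diag}(-1,-1,1,1)$. Writing $P=[1:a:b:c]$, it then compares the coordinate vectors of $\pi_P\circ\tau$ and $\pi_P$. In the second case the resulting linear system has no solution; in the first it forces $P=[1:0:0:0]$.

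\textbf{Your route.} You normalize $P=[1:0:0:0]$ first. You then use the identity $\pi_P\circ T=\pi_P$, upgraded from $S$ to all of $\mathbb{P}^3$ because the $2\times 2$ minors are quadrics vanishing on the quartic $S$. This shows $T$ is a central collineation with center $P$, i.e.\ of block form $\begin{pmatrix}\alpha & v\\ 0&\mu I\end{pmatrix}$. The involution condition and the conjugation $X\mapsto X-\tfrac12 v\cdot(Y,Z,W)$ then give the normal form with no case split. This minors argument is exactly the one the paper only uses later, in Lemma \ref{quasi-line}.

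\textbf{What each buys.} Your route gains rigor and uniformity. The paper's step ``there exists $k\in\mathbb{C}^{\ast}$ with $(\ldots)=k(\ldots)$'' tacitly treats an equality of maps on $S$ as a polynomial identity, which your quadric argument justifies. You also never need the normalization $P=[1:a:b:c]$ to be compatible with the diagonal form of $\tau$. Two things come out automatically for you: the involution with eigenvalues $(-1,-1,1,1)$ is excluded, and $P$ is identified as the isolated fixed point of $T$. The paper's route is more computational, but it shows explicitly which type of involution of $\mathbb{P}^3$ cannot arise.
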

\begin{proof}
Let $P=[1:a:b:c] \in \mathbb{P}^{3}$ be a quasi-Galois point which is not Galois for $S$.
Since $\deg \pi_{P} \leq \deg S =4$ holds,  
a generator $\tau$ of $G[P]$ is of order 2.
Hence, we may assume that $\tau$ is
\[
\begin{pmatrix} 
-1 & 0 & 0 & 0 \\
0 & 1 & 0 & 0 \\
0 & 0 & 1 & 0 \\
0 & 0 & 0 & 1 
 \end{pmatrix}
\ \text{or} \ 
\begin{pmatrix} 
-1 & 0 & 0 & 0 \\
0 & -1 & 0 & 0 \\
0 & 0 & 1 & 0 \\
0 & 0 & 0 & 1 
 \end{pmatrix}
\]
by replacing coordinates of $\mathbb{P}^{3}$.

Suppose an automorphism $\tau$ of $S$ is given by the latter, hence
\[ \tau : [X:Y:Z:W] \mapsto [-X:-Y:Z:W].\]
Since $\pi_{P}$ is given by $[X:Y:Z:W] \mapsto [Y-aX:Z-bX:W-cX]$, 
we have \[\pi_{P} \circ \tau : [X:Y:Z:W] \mapsto [-Y+aX:Z+bX:W+cX]. \]
Since $P$ is quasi-Galois, we have \[[-Y+aX:Z+bX:W+cX]= [Y-aX:Z-bX:W-cX].\]
This implies that there exists a non-zero constant $k \in \mathbb{C}^{\ast}$ such that
\[\begin{pmatrix} -Y+aX \\ Z+bX \\ W+cX \\  \end{pmatrix}
=k\begin{pmatrix} Y-aX \\ Z-bX \\ W-cX \\  \end{pmatrix},\]
hence we have
\[\begin{pmatrix} 
a(1+k) & -1-k & 0 & 0 \\
b(1+k) & 0 & 1-k & 0 \\
c(1+k) & 0 & 0 & 1-k 
 \end{pmatrix}
\begin{pmatrix} X \\ Y \\ Z \\ W \\  \end{pmatrix}
=\begin{pmatrix} 0 \\ 0 \\ 0 \\  \end{pmatrix}.\]
But the coefficient matrix of the system of linear equations is not zero
for any $a$, $b$, $c$, and $k$.
This is a contradiction.

If $\tau$ is given by 
\[ \tau : [X:Y:Z:W] \mapsto [-X:Y:Z:W]\]
then we have \[[Y+aX:Z+bX:W+cX]= [Y-aX:Z-bX:W-cX].\]
Thus we have $P=[1:a:b:c]=[1:0:0:0]$.
Moreover, since $\tau$  is an automorphism of a smooth quartic surface $S$,
its general defining equation is
$X^{4}+X^{2}F_{2}(Y,Z,W)+F_{4}(Y, Z, W)=0$.
\end{proof}

\begin{rem}
If a smooth quartic surface $S$ has an outer Galois point  
then it is of the form: $X^{4}+F_{4}(Y, Z, W)=0$ (up to projective equivalence)
by \cite[Corollary 6]{Yoshihara--hyper}.
\end{rem}

\begin{cor}\label{genefixqg}
Let $S$ be a smooth quartic surface with a quasi-Galois point $P$ which is not Galois.
If $\tau \in G[P]$ then $P\not \in S$ is a fixed point of $\tau$ as an automorphism of $\mathbb{P}^{3}$.
\end{cor}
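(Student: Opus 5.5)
The plan is to read the statement off Proposition \ref{equation1ko} and its proof, using that everything there is invariant under projective equivalence. Let $\tau \in G[P]$. By Lemma \ref{geneprojective}, $\tau$ is a projective transformation, so it is an automorphism of $\mathbb{P}^{3}$ preserving $S$. Since $P$ is quasi-Galois but not Galois, $|G[P]|$ is a proper divisor of $\deg \pi_{P}$. Because $P\notin S$ by the earlier remark, $\deg\pi_P = 4$, and so $|G[P]|=2$. Hence $\tau$ is either the identity, which trivially fixes $P$, or the unique involution generating $G[P]$.

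First I would show $P\notin S$. The remark preceding Lemma \ref{geneprojective} says a quasi-Galois point lying on a smooth quartic surface is Galois. Since $P$ is not Galois, $P\notin S$.

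Next I would choose coordinates as in the proof of Proposition \ref{equation1ko}, via a projective transformation $g$ of $\mathbb{P}^{3}$. In those coordinates $P=[1:0:0:0]$ and the generator of $G[P]$ is $\tau_0:[X:Y:Z:W]\mapsto[-X:Y:Z:W]$. Conjugation by $g$ carries $\pi_{P}$ to the projection from $g(P)$, up to a projective transformation of the target, which is harmless by the first remark. So the statement transported to the new coordinates is that $\tau_0$ fixes $[1:0:0:0]$, and this holds since $\tau_0([1:0:0:0])=[-1:0:0:0]=[1:0:0:0]$.

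For a coordinate-free check, one can also argue directly. If $\tau(P)\neq P$, pick a general point $x\in S$; then $\tau(x)$ lies on the line $\overline{Px}$. Consider the images of lines through $P$ under $\tau$. Each line $\ell\ni P$ meets $S$ in a fibre of $\pi_P$, which $\tau$ preserves, so $\tau(\ell)\cap S=\ell\cap S$ for general $\ell$. These four points span $\ell$, so $\tau(\ell)=\ell$. Then $\tau(P)$ lies on every line through $P$, forcing $\tau(P)=P$.

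The only mild subtlety is ensuring the coordinate change in Proposition \ref{equation1ko} is compatible with the definition of $G[P]$, i.e.\ that $G[g(P)]=gG[P]g^{-1}$ for the transformed surface. This is immediate from the definition.
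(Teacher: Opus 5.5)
Your proposal is correct and follows the paper's own route. The paper states this corollary without a separate proof, reading it directly off Proposition \ref{equation1ko}: there $P=[1:0:0:0]$ and $G[P]$ is generated by $[X:Y:Z:W]\mapsto[-X:Y:Z:W]$, and $P\notin S$ because a quasi-Galois point on $S$ is Galois. That is exactly your main argument.

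Your additional coordinate-free check is also valid. Since $\tau$ permutes each fibre $\ell\cap S$ of $\pi_P$, it fixes every general line $\ell$ through $P$, and hence fixes $P$. This is essentially the content of the paper's later Lemma \ref{quasi-line}, and it has the advantage of not relying on the case analysis in the proof of Proposition \ref{equation1ko}.
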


As above, a quasi-Galois point induces a special automorphism, and vice versa.

\begin{prop}\label{hobokore}
Let $S$ be a smooth quartic surface.
If  the projective transformation
\[\tau : [X:Y:Z:W] \mapsto [(-1)^{i}Y:X:\zeta_{4}^{2-i}Z:\zeta_{4}^{2-i}W]\]
of order 2 acts on $S$ as an involution then points
$P_{i}:=[1:\zeta_{4}^{i}:0:0]$ ($i=0, 1,2, 3$) are quasi-Galois points for $S$.
\end{prop}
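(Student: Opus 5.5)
The plan is to verify the definition directly: for each $i$, I will show that $\tau$ lies in $G[P_{i}]$ and is not the identity, so that $|G[P_{i}]|\geq 2$. Throughout, $\tau$ denotes the transformation attached to the same index $i$ as $P_{i}$. By the Remark in Section \ref{fund-qgp}, $G[P]$ does not depend on the choice of the projection, so I may fix a convenient one. Since the first two coordinates of $P_{i}$ are $1$ and $\zeta_{4}^{i}$ and the last two vanish, I take
\[\pi_{P_{i}}:[X:Y:Z:W]\mapsto [Y-\zeta_{4}^{i}X:Z:W].\]
Its three linear forms vanish simultaneously exactly at $P_{i}$, so this is a legitimate projection from $P_{i}$. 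As a sanity check, consistent with Corollary \ref{genefixqg}, $\tau$ fixes $P_{i}$: indeed $\tau(P_{i})=[(-1)^{i}\zeta_{4}^{i}:1:0:0]$, and multiplying by $(-1)^{i}\zeta_{4}^{i}$ gives $P_{i}$, because $(-1)^{i}\zeta_{4}^{2i}=1$.

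The key step is to compute $\pi_{P_{i}}\circ\tau$ and show that it differs from $\pi_{P_{i}}$ by a scalar. Substituting $\tau$ gives
\[\pi_{P_{i}}\circ\tau([X:Y:Z:W])=[X-\zeta_{4}^{i}(-1)^{i}Y:\zeta_{4}^{2-i}Z:\zeta_{4}^{2-i}W].\]
The natural candidate scalar is $k=\zeta_{4}^{2-i}$, read off from the last two entries. It then remains to check the identity
\[X-(-1)^{i}\zeta_{4}^{i}Y=\zeta_{4}^{2-i}(Y-\zeta_{4}^{i}X).\]
The right-hand side equals $-\zeta_{4}^{-i}Y+X$, and $(-1)^{i}\zeta_{4}^{i}=\zeta_{4}^{3i}=\zeta_{4}^{-i}$, so the two sides agree. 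Hence $\pi_{P_{i}}\circ\tau=\pi_{P_{i}}$ on all of $\mathbb{P}^{3}$, and in particular on $S$.

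Since $\tau$ acts on $S$ as an involution by hypothesis, it is a nontrivial automorphism of $S$ and so gives an element of $\Bir(S)$ other than the identity. Therefore $\tau\in G[P_{i}]\setminus\{\id\}$, which gives $|G[P_{i}]|\geq 2$, and $P_{i}$ is quasi-Galois.

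The only real obstacle is the root-of-unity bookkeeping in the key identity, that is, checking $-(-1)^{i}\zeta_{4}^{i}=\zeta_{4}^{2-i}$ uniformly in $i$; I would also cross-check it case by case for $i=0,1,2,3$. Note that the proposition only asserts that $P_{i}$ is quasi-Galois. It does not claim $P_{i}$ is non-Galois, so nothing further about $S$ (such as excluding Galois points) needs to be shown.
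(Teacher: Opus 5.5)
Your proposal is correct and follows essentially the same route as the paper: you use the same projection $\pi_{P_{i}}:[X:Y:Z:W]\mapsto[Y-\zeta_{4}^{i}X:Z:W]$ and verify directly that $\pi_{P_{i}}\circ\tau=\pi_{P_{i}}$. Your scalar $\zeta_{4}^{2-i}$ is just the inverse of the factor $-\zeta_{4}^{i}$ that the paper multiplies through by, and your extra remarks (that $\tau$ fixes $P_{i}$ and that $\tau|_{S}\neq\id$) are harmless additions.
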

\begin{proof}
We consider a projection
\[\pi_{P_{i}}: [X:Y:Z:W] \mapsto [Y-\zeta_{4}^{i}X:Z:W].\]
For all points $[X:Y:Z:W] \in S$, we have 
\begin{align*}
(\pi_{P_{i}}\circ \tau )\left([X:Y:Z:W]\right) &=\pi_{P_{i}} \left( [(-1)^{i}Y:X:\zeta_{4}^{2-i}Z:\zeta_{4}^{2-i}W] \right)\\
&=[X-(-1)^{i}\zeta_{4}^{i}Y:\zeta_{4}^{2-i}Z:\zeta_{4}^{2-i}W]\\
&=[-\zeta_{4}^{i}X+(-1)^{i}\zeta_{4}^{2i}Y:-\zeta_{4}^{2}Z:-\zeta_{4}^{2}W]\\
&=[-\zeta_{4}^{i}X+(-1)^{i}(-1)^{i}Y:Z:W]\\
&=\pi_{P_{i}} \left([X:Y:Z:W]\right).
\end{align*}
Thus $P_{i}$ is quasi-Galois for $S$.
\end{proof}

Whether a given point is quasi-Galois can be determined as follows.
\begin{prop}\label{polar-hyper}
Let $S$ be a smooth quartic surface defined by the quartic defining polynomial $F$,
$D_{P}F=0$ and $D^{3}_{P}F=0$ be the 1-st polar hypersurface 
and the 3-rd polar hypersurface of $S$ at $P:=[a:b:c:d] \in \mathbb{P}^{3} \setminus S$, respectively.
Hence
\begin{align*}
D_{P}F &=a\frac{\partial F}{\partial X}+b\frac{\partial F}{\partial Y}+c\frac{\partial F}{\partial Z}+d\frac{\partial F}{\partial W},\\
D^{3}_{P}F &=\sum_{i+j+k+l=3}
\frac{3!}{i! j! k! l!}
a^{i} b^{j} c^{k} d^{l}
\frac{\partial^{3} F}{\partial X^{i} \partial Y^{j} \partial Z^{k} \partial W^{l}}.
\end{align*}
Then $P$ is a quasi-Galois point if and only if there exists a homogeneous polynomial $Q_{2}(X, Y, Z, W)$ of degree 2 such that 
$D_{P}F=D^{3}_{P}F\cdot Q_{2}(X, Y, Z, W)$ holds.
\end{prop}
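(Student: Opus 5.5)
We will reduce to a normal form at $P$ and compare both conditions with the shape given by Proposition \ref{equation1ko}. Both conditions are invariant under projective changes of coordinates. The polar operators $D_{P}$ and $D_{P}^{3}$ are covariant: if $A\in GL(4,\mathbb{C})$ and $F^{A}=F\circ A$, then $D_{A^{-1}P}F^{A}=(D_{P}F)\circ A$, and likewise for $D^{3}$. They also change only by a nonzero scalar if the representative of $P$ is rescaled, and divisibility is unaffected by these changes. We may therefore assume $P=[1:0:0:0]$. Since $P\notin S$, we can write
\[F=a_{0}X^{4}+X^{3}G_{1}+X^{2}G_{2}+XG_{3}+G_{4},\qquad a_{0}\neq 0,\]
with $G_{d}\in\mathbb{C}[Y,Z,W]$ homogeneous of degree $d$. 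Then $D_{P}F=\partial F/\partial X$, and $D^{3}_{P}F=\partial^{3}F/\partial X^{3}=24a_{0}X+6G_{1}$ is a linear form.

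Next we make the substitution $X=X'-G_{1}/(4a_{0})$, keeping $Y,Z,W$ unchanged. This fixes $P$ and satisfies $\partial/\partial X'=\partial/\partial X$. After it, $F=a_{0}X'^{4}+X'^{2}H_{2}+X'H_{3}+H_{4}$, and $D^{3}_{P}F$ becomes $24a_{0}X'$. The divisibility condition therefore says that $X'$ divides $4a_{0}X'^{3}+2X'H_{2}+H_{3}$, which is equivalent to $H_{3}=0$, i.e.\ to $F$ being even in $X'$.

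It remains to show that $P$ is quasi-Galois if and only if $F$ is even in $X'$ in these coordinates.

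If $F$ is even in $X'$, the involution $\tau\colon X'\mapsto -X'$, fixing $Y,Z,W$, preserves $S$. The projection from $P$ is $[X':Y:Z:W]\mapsto[Y:Z:W]$, so $\pi_{P}\circ\tau=\pi_{P}$. Hence $|G[P]|\ge 2$.

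Conversely, suppose $P$ is quasi-Galois. If $P$ is Galois, the remark after Proposition \ref{equation1ko} (Yoshihara) gives coordinates in which $F=X^{4}+F_{4}(Y,Z,W)$. If $P$ is not Galois, Proposition \ref{equation1ko} gives coordinates in which $F=X^{4}+X^{2}F_{2}+F_{4}$, with $P=[1:0:0:0]$. In both cases the coordinate change fixes $P$. By Lemma \ref{geneprojective} and Corollary \ref{genefixqg}, the involution $X\mapsto -X$ in those coordinates is a linear map $\tau$ with $\tau(P)=P$ and $\pi_{P}\circ\tau=\pi_{P}$.

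The main obstacle is to relate those coordinates to our $X'$. The condition $\pi_{P}\circ\tau=\pi_{P}$ forces $\tau$ to act as a scalar on $\mathbb{C}^{4}/\langle P\rangle$. Since $\tau$ also has the eigenvector $P$ with the other eigenvalue, its other eigenspace is a hyperplane $\{X+\ell(Y,Z,W)=0\}$ not through $P$. So $F$ is even in $X''=c(X+\ell)$ for some $c\neq 0$, with $Y,Z,W$ possibly replaced by an invertible linear recombination, which does not affect parity.

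Finally, two such coordinates differ by a shift $X''=X'+m(Y,Z,W)$ up to scalar. Expanding $F$ in $X'$, the $X'^{3}$-coefficient is $4a_{0}m$, which must vanish because $F$ has no $X'^{3}$ term. Hence $m=0$, $F$ is even in $X'$, and so $H_{3}=0$. This establishes the equivalence.
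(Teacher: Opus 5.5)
Your proof is correct and follows essentially the same route as the paper: move $P$ to $[1:0:0:0]$, shift $X$ to kill the $X^{3}$-term so that $D^{3}_{P}F$ becomes a multiple of $X'$, and read divisibility as the vanishing of the $X'$-linear coefficient, i.e.\ evenness of $F$ in $X'$, which is then matched with Proposition \ref{equation1ko}. Your converse argument is more careful than the paper's one-line ``the same applies in reverse'': you show the shift is unique because the $X'^{3}$-coefficient $4a_{0}m$ must vanish, and you treat the Galois case separately.
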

\begin{proof}
By replacing coordinates, we may put $P=[1:0:0:0]$.
Let $[X_{0}: X_{1}: X_{2}: X_{3}]$ be the new coordinate.
Then $S$ is of the form
\[ S:X_{0}^{4}+A_{1}(X_{1},X_{2}, X_{3})X_{0}^{3}+A_{2}(X_{1},X_{2}, X_{3})X_{0}^{2}+A_{3}(X_{1},X_{2}, X_{3})X_{0}+A_{4}(X_{1},X_{2}, X_{3})=0\]
where $A_{d}(X_{1},X_{2}, X_{3})$ is a homogeneous polynomial of degree $d$.
Moreover we have 
\begin{align*}
D_{P}F &=4X_{0}^{3}+3A_{1}X_{0}^{2}+2A_{2}X_{0}+A_{3},\\
D^{3}_{P}F &=24X_{0}+6A_{1}.
\end{align*}
We remark that $D_{P}F=D^{3}_{P}F \cdot Q_{2}$ holds if and only if $D_{P}F(-A_{1}/4)=0$  holds %$A_{3}=A_{1}A_{2}/2-A_{1}^{3}/8$ 
by the factor theorem.

Put $Y_{0}:=X_{0}+A_{1}/4$. 
Note that $P$ is represented by $[1:0:0:0]$ in the new coordinate system $[Y_{0}:X_{1}:X_{2}:X_{3}]$ as well.
Then $S$ is given by 
\[ S:Y_{0}^{4}+B_{1}(X_{1},X_{2}, X_{3})Y_{0}^{3}+B_{2}(X_{1},X_{2}, X_{3})Y_{0}^{2}+B_{3}(X_{1},X_{2}, X_{3})Y_{0}+B_{4}(X_{1},X_{2}, X_{3})=0\]
where $B_{d}(X_{1},X_{2}, X_{3})$ is a homogeneous polynomial of degree $d$.
By the Taylor expansion, we have
\begin{align*}
B_{1}&=\left . \frac{1}{3!}\frac{\partial^{3} F}{\partial X_{0}^{3}} \right |_{X_{0}=-A_{1}/4}\\
&=\left .(4X_{0}+A_{1}) \right |_{X_{0}=-A_{1}/4}\\
&=0, \\
B_{3}&=\left . \frac{\partial F}{\partial X_{0}} \right |_{X_{0}=-A_{1}/4}\\
&=D_{P}F(-A_{1}/4).
\end{align*}

Since if $D_{P}F=D^{3}_{P}F\cdot Q_{2}(X, Y, Z, W)$ holds then $D_{P}F(-A_{1}/4)=0$,
$S$ is of the form $S: Y_{0}^{4}+B_{2}Y_{0}^{2}+B_{4}=0$.
By Proposition \ref{equation1ko}, $P$ is a quasi-Galois point.
The same applies in reverse.
\end{proof}

\subsection{2-elementary $K3$ surfaces}

We recall basic properties of 2-elementary $K3$ surfaces. See also \cite{Ni3} for details.

A \textit{lattice} $L$ is a free abelian group of finite rank $r$ equipped with 
a non-degenerate symmetric bilinear form, which will be denoted by $\langle \ , \ \rangle $.
The bilinear form $\langle \ , \ \rangle $ determines a 
canonical embedding $L\subset L^{\ast }=\Hom (L,\mathbb{Z})$. 
We denote by $A_{L}$ the factor group $L^{\ast }/L$ which is a finite abelian group.
A lattice $L$ is called \textit{2-elementary} if $A_{L}\simeq (\mathbb{Z}/2\mathbb{Z})^{\oplus a}$,
where $a$ is the minimal number of generator of $A_{L}$.
In the following, we treat only even indefinite lattices. 
Hence for any $x \in L$, the value of $\langle x , x \rangle$ is even and 
the signature of $\langle \ , \ \rangle$ is indefinite.

\begin{df}
Let $L:=(L, \langle \ , \ \rangle )$ be an even 2-elementary lattice and 
$(L^{\ast}, \langle \ , \ \rangle _{L^{\ast}})$ its dual lattice.
These induce the discriminant form 
$q : A_{L} \to \mathbb{Q}/2\mathbb{Z}, \  q(x+L) = \langle x, x \rangle _{L^{\ast}}+2\mathbb{Z}$.
Then we put
\begin{equation*}
\delta _{L}=
\begin{cases}
0 & \text{if} \ q(x+L)=0,  \forall x \in L^{\ast }, \\
1 & \text{otherwise}.
\end{cases}
\end{equation*}
\end{df}

\begin{prop}(\cite[Theorem 3.6.2]{Ni1})\label{cla-2el}
An even indefinite 2-elementary lattice $L$ is determined by 
the invariants $(t_{+},t_{-}, a, \delta _{L}, )$ where the pair $(t_{+},t_{-})$ is the signature of $L$.
\end{prop}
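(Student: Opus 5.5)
This is Nikulin's classification of even indefinite 2-elementary lattices. The plan is to show that two such lattices with the same invariants $(t_{+},t_{-},a,\delta_{L})$ are isometric. I would reduce this to two standard facts:
\begin{enumerate}
\item the discriminant form $q_{L}$ of a 2-elementary lattice is determined up to isomorphism by $(t_{+}-t_{-}, a, \delta_{L})$;
\item an even indefinite lattice is unique in its genus under a mild rank condition, and the genus is determined by the signature and $q_{L}$.
\end{enumerate}

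\textbf{Step 1: the discriminant form.} Since $A_{L}\simeq(\mathbb{Z}/2\mathbb{Z})^{\oplus a}$, the form $q_{L}$ is a nondegenerate quadratic form on an $\mathbb{F}_{2}$-vector space, with values in $\mathbb{Z}/2\mathbb{Z}\cdot\frac12$ modulo $2$. I would decompose $q_{L}$ as an orthogonal sum of the elementary pieces $\langle \pm\tfrac12\rangle$, $u_{+}^{(2)}(2)$ and $v_{+}^{(2)}(2)$, i.e. the discriminant forms of $\langle\pm 2\rangle$, $U(2)$ and $D_{4}$. The relations among these pieces are, for example, $3\langle\tfrac12\rangle\simeq\langle -\tfrac12\rangle\oplus u(2)$ and $2v\simeq 2u$. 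Using them, one gets normal forms:
\begin{itemize}
\item if $\delta_{L}=0$, then $q_{L}$ is a sum of copies of $u$ and $v$ (all values are integral);
\item if $\delta_{L}=1$, then $q_{L}$ contains a $\langle\pm\tfrac12\rangle$ summand.
\end{itemize}
Within each case, the isomorphism class is pinned down by $a$ together with the signature of $q_{L}$ modulo $8$, which the Milgram/van der Blij formula identifies with $t_{+}-t_{-}$. This shows that $(t_{+},t_{-},a,\delta_{L})$ determines the genus of $L$.

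\textbf{Step 2: uniqueness in the genus.} I would apply Nikulin's criterion: an even indefinite lattice with $\operatorname{rank} L\ge l(A_{L})+2$ is unique in its genus. It is safer, though, to use the finer 2-adic statement: uniqueness holds provided the 2-adic component either splits off $u(2)$ or $v(2)$, or has rank greater than its length. A 2-elementary lattice always satisfies $a\le r$.
\begin{itemize}
\item When $a\le r-2$, the rank criterion applies directly.
\item When $a=r$ or $a=r-1$, I would argue via the 2-adic completion. For $a\ge 3$, the form $q_{L}$ splits off a $u$ or $v$ summand, so the 2-adic criterion still applies. For $a\le 2$ with $a\ge r-1$, the rank is small, and I would check the few indefinite lattices directly, such as $U$, $U(2)$, $\langle 2\rangle\oplus\langle -2\rangle$ and $\langle 2\rangle\oplus U$. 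An alternative is to use the map $L\mapsto L^{\ast}(2)$, which keeps $L$ 2-elementary and exchanges $a$ with $r-a$. This reduces the large-$a$ cases to small-$a$ ones, where the rank criterion applies.
\end{itemize}

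\textbf{Expected obstacle.} The hard step is this uniqueness-in-genus argument in the boundary cases $a\in\{r-1,r\}$. There one has to control the surjectivity $O(L)\to O(q_{L})$ and the 2-adic spinor norms by hand. I expect the duality trick $L\mapsto L^{\ast}(2)$ to be the cleanest route. Since the paper simply cites \cite[Theorem 3.6.2]{Ni1}, in context it suffices to invoke that result. The sketch above is how I would reconstruct it.
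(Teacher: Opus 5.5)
Your outline is correct and is essentially Nikulin's own proof of Theorem 3.6.2; the paper gives no argument and only cites that theorem. The three ingredients are right:
\begin{enumerate}
\item The genus is fixed by the signature and $q_L$.
\item A 2-elementary form is classified by $a$, $\delta_L$ and $t_{+}-t_{-} \bmod 8$.
\item Uniqueness in the genus follows from Nikulin's criteria: either $\operatorname{rk}L\ge l(A_L)+2$, or $q_L$ splits off $u_{+}^{(2)}(2)$ or $v_{+}^{(2)}(2)$ and $\operatorname{rk}L\ge 3$.
\end{enumerate}
Several details need correcting, however.

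\textbf{The sample relation is false.} You wrote $3\langle\tfrac12\rangle\cong\langle-\tfrac12\rangle\oplus u_{+}^{(2)}(2)$. The left side has signature $3$ and no nonzero isotropic element; the right side has signature $7 \bmod 8$ and contains isotropic vectors. The correct identities are $3\langle\tfrac12\rangle\cong\langle-\tfrac12\rangle\oplus v_{+}^{(2)}(2)$ and $2\langle\tfrac12\rangle\oplus\langle-\tfrac12\rangle\cong\langle\tfrac12\rangle\oplus u_{+}^{(2)}(2)$. With these, your claim that $q_L$ splits off $u_{+}^{(2)}(2)$ or $v_{+}^{(2)}(2)$ once $a\ge 3$ is true.

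\textbf{The boundary cases are fewer than you list.} The case $a=r-1$ never occurs. The unimodular Jordan component of $L\otimes\mathbb{Z}_2$ is even, hence of even rank, so $r\equiv a \pmod 2$. Likewise $U$ and $\langle 2\rangle\oplus U$ are not boundary cases; they are already covered by $\operatorname{rk}L\ge l(A_L)+2$. So $a=r$ is the only boundary case.

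\textbf{The duality trick fails when $\delta_L=1$.} This is the route you expected to be cleanest. The lattice $L^{\ast}(2)$ is even if and only if $q_L$ is integer-valued, that is, if and only if $\delta_L=0$. For the type $(8,8,1)$ used in this paper, $L\cong\langle 2\rangle\oplus\langle -2\rangle^{\oplus 7}$. Then $L^{\ast}(2)\cong\langle 1\rangle\oplus\langle -1\rangle^{\oplus 7}$ is odd, so a uniqueness criterion for even lattices says nothing about it.

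The case $a=r$ can be handled directly. Then $L=2L^{\ast}$, so $L=N(2)$ with $N$ unimodular. Moreover $N$ is even exactly when $\delta_L=0$. The classification of indefinite unimodular lattices by rank, signature and parity then gives uniqueness. Your 2-adic splitting argument for $a=r\ge 3$ also works, so your main line survives; only the suggested alternative is flawed.
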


If $L$ is an even hyperbolic 2-elementary lattice, hence its signature is $(1, r-1)$, then 
$L$ is called a \textit{2-elementary lattice of type $(r, a, \delta)$}.

Let $S$ be a $K3$ surface, $\omega _{S}$ a nowhere vanishing holomorphic 2-form on $S$
and $\iota$ a non-symplectic involution on $S$.
Hence $\iota$ is of order 2 and satisfies $\sigma^{\ast} \omega _{S}=-\omega _{S}$.
It is well known that the invariant lattice
\[L:=\{x \in H^{2}(S, \mathbb{Z}) \mid \iota^{\ast}(x)=x \} \]
is an even 2-elementary lattice of signature $(1, r-1)$.
If $L$ is of type $(r, a, \delta)$ then 
a pair $(S, \iota)$ is called a \textit{2-elementary $K3$ surface of type $(r, a, \delta)$}.

\begin{prop}\label{fixedlocus}(\cite[Theorem 4.2.2]{Ni3})
Let $(S, \iota)$ be a 2-elementary $K3$ surface of type $(r, a, \delta)$.
Then the fixed locus $S^{\iota}=\{x \in S \mid \iota(x)=x\}$ of $\iota$ is of the form 
\begin{equation*}
S^{\iota}=
\begin{cases}
\phi  & \text{if $(r, a, \delta)=(10, 10, 0)$}, \\
C^{(1)}\amalg C^{(1)} & \text{if $(r, a, \delta)=(10, 8, 0)$}, \\
C^{(g)} \amalg \mathbb{P}^{1} \amalg \dots \amalg \mathbb{P}^{1} & \text{otherwise},
\end{cases}
\end{equation*}
where $C^{(g)}$ is a genus $g$ curve with $g=(22-r-a)/2$.
Moreover the number of $\mathbb{P}^{1}$ is given by $(r-a)/2$.
\end{prop}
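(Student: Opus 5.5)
The plan is to combine three ingredients: the local structure of the fixed locus, two topological counts (Lefschetz and Smith theory), and the Hodge index theorem. As in the paper, write $\iota$ for the non-symplectic involution; the definition's "$\sigma^{\ast}$" should read $\iota^{\ast}$.

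\textbf{Step 1: the fixed locus is a disjoint union of smooth curves.} At a fixed point $x$, linearize $\iota$ on $T_xS$. Its eigenvalues lie in $\{\pm1\}$ and their product equals the action on $\omega_S$, which is $-1$. So the eigenvalues are $(1,-1)$. Hence $S^{\iota}$ has no isolated points and is a smooth curve, possibly empty. Write $S^{\iota}=C_1\amalg\dots\amalg C_k$ with $g_i=g(C_i)$.

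\textbf{Step 2: two topological counts.} The involution acts on $H^2(S,\mathbb{Z})$ with $+1$-eigenspace $L$ of rank $r$ and $-1$-eigenspace of rank $22-r$. The topological Lefschetz formula gives
\[ \chi(S^{\iota})=2+r-(22-r)=2r-20. \]
Next, Smith theory for involutions, in Nikulin's refined form using that $L$ is $2$-elementary with $A_L\simeq(\mathbb{Z}/2\mathbb{Z})^{a}$, should give
\[ \dim_{\mathbb{F}_2} H^{\ast}(S^{\iota},\mathbb{F}_2)=24-2a \quad\text{whenever } S^{\iota}\neq\emptyset. \]
This uses that the cohomology of $\langle\iota\rangle$ with coefficients in $H^2(S,\mathbb{Z})$ is controlled by $a$. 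Writing $\chi=\sum(2-2g_i)$ and $\sum b_i=\sum(2+2g_i)$ and solving the two equations gives
\[ k=\frac{r-a}{2}+1, \qquad \sum_i g_i=\frac{22-r-a}{2}. \]

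\textbf{Step 3: at most one curve of positive genus.} Each $C_i$ is $\iota$-invariant, so its class lies in $L$, and $C_i^2=2g_i-2$ by adjunction. Take two disjoint fixed curves of positive genus. They are orthogonal classes with nonnegative square, so by the Hodge index theorem both have square $0$ and are proportional. Hence both are elliptic curves and fibers of the same elliptic fibration. In that case $k=2$ and $\sum g_i=2$, which forces $r-a=2$ and $r+a=18$, i.e.\ $(r,a)=(10,8)$. Otherwise the curves are one curve $C^{(g)}$ with $g=(22-r-a)/2$ together with $(r-a)/2$ smooth rational curves, as stated.

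\textbf{Step 4: the empty case and the invariant $\delta$.} If $S^{\iota}=\emptyset$, Lefschetz gives $r=10$. Then $S/\iota$ is an Enriques surface, so $L\simeq U(2)\oplus E_8(2)$, which has $(a,\delta)=(10,0)$. To match the exceptional cases with $\delta$, use the criterion $\delta=0$ iff the class $[S^{\iota}]$ is divisible by $2$ in $L$, or equivalently iff $q$ vanishes on $A_L$. In the two-elliptic-curve case, $[C_1]+[C_2]=2[F]$ with $F$ the fibre class, so $\delta=0$. Conversely, $(10,8,0)$ forces two fixed curves of total genus $2$. A single genus-$2$ curve plus one rational curve would give a class not divisible by $2$, hence $\delta=1$; so that case is excluded.

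\textbf{Main obstacle.} I expect the hardest step to be the Smith-theory equality $\dim H^{\ast}(S^{\iota},\mathbb{F}_2)=24-2a$, which needs the exact link between group cohomology and the discriminant group of $L$. I would first try to import it from Nikulin's analysis \cite{Ni3}, via the equivariant cohomology spectral sequence, rather than reprove it. The second delicate point is the divisibility criterion for $\delta$.
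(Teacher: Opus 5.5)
The paper does not prove this statement; it quotes Nikulin's Theorem 4.2.2. Your skeleton (local linearization, Lefschetz plus Smith theory, Hodge index, and the $2$-divisibility criterion for $\delta$) is the standard one. Step 1, the Lefschetz count, and the Hodge-index argument in Step 3 are sound.

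The two facts you import carry essentially all of Nikulin's content. These are the identity $\dim H^{\ast}(S^{\iota},\mathbb{F}_2)=24-2a$ for $S^{\iota}\neq\emptyset$, and the criterion ``$\delta=0$ iff $[S^{\iota}]\in 2L$'', which Step 4 needs in \emph{both} directions. So the proposal is at best a reduction to his paper, which is what the authors achieve by citing it. A small correction: $\delta=0$ means $q(A_L)\subset\mathbb{Z}/2\mathbb{Z}$, not $q\equiv 0$. In $U(2)$ one has $q((e+f)/2)=1$, so under your literal ``$q$ vanishes'' the lattice $U(2)\oplus E_8(2)$ would get $\delta=1$. The paper's definition has the same slip.

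Beyond the imported inputs, there are three genuine gaps.

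First, in Step 3 you assert $k=2$ in the two-elliptic case. Nothing you say excludes further fixed curves inside other fibres, and the counts of Step 2 cannot: one extra rational curve would give $(r,a)=(11,7)$, a type that occurs. You need that $\iota$ acts nontrivially on the base of the fibration $f\colon S\to\mathbb{P}^{1}$ given by $|F|$, which $\iota$ preserves because $[C_1]=F$ is invariant. At $x\in C_1$ the $(-1)$-eigenline is transverse to the smooth fibre $C_1$. Hence the induced involution of $\mathbb{P}^{1}$ has derivative $-1$ at $f(C_1)$, so its only fixed points are $f(C_1)$ and $f(C_2)$. Any other fixed curve lies in an $\iota$-stable fibre, hence in $C_1\cup C_2$, a contradiction.

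Second, Step 4 only shows $S^{\iota}=\emptyset\Rightarrow(r,a,\delta)=(10,10,0)$. For the converse you must exclude, for type $(10,10,0)$, the nonempty alternative that Steps 2--3 produce, namely a single elliptic curve $E$. This follows from the criterion once you note that $[E]$ is primitive: $h^{0}(\mathcal{O}_S(E))=2$, while $E=2D$ with $D$ nef isotropic would force $h^{0}\geq 3$.

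Third, the claim that a genus-$2$ curve plus a rational curve gives a class not divisible by $2$ is not formal. The class $D=(C+E)/2$ would have $D^2=0$, $D\cdot C=1$, $D\cdot E=-1$, which is numerically consistent, so an argument is needed. One option: $D$ is effective with $h^{0}(D)\geq 2$. Its mobile part $M$ is nef with $1\leq M\cdot C\leq D\cdot C=1$, so Hodge index gives $M^2=0$, and $M$ is an elliptic fibre with $M\cdot C=1$. By Saint-Donat, $|C|$ then has a fixed component, which is impossible for an irreducible $C$ with $C^2=2$.

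For the paper's own uses (type $(8,8,1)$ and a single genus-$3$ fixed curve), only Steps 1--2 are needed. There $r\neq 10$ makes $S^{\iota}$ nonempty, and the two counts give $k=1$, $g=3$, so none of these gaps affect the paper.
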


\begin{prop}\label{mt1-1}
Let $S$ be a smooth quartic surface with a quasi-Galois point $P$ 
which is not Galois 
and $\iota$ be a generator of $G[P]$.
Then a pair $(S, \iota)$ is a 2-elementary $K3$ surface of type $(8,8,1)$.
\end{prop}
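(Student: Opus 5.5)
By Proposition \ref{equation1ko}, after a projective change of coordinates we may take $S$ to be
\[ S: X^{4}+X^{2}F_{2}(Y,Z,W)+F_{4}(Y,Z,W)=0, \]
with $P=[1:0:0:0]$ and $\iota:[X:Y:Z:W]\mapsto[-X:Y:Z:W]$. The plan has three steps: show $\iota$ is non-symplectic, compute its fixed locus, and read off $(r,a,\delta)$ from Proposition \ref{fixedlocus}.

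\textbf{Non-symplecticity.} Use the residue description of the $2$-form. On the affine chart $W=1$, $\omega_S$ is the residue of $dX\wedge dY\wedge dZ/F$, for instance
\[ \omega_{S}=\frac{dY\wedge dZ}{\partial F/\partial X}. \]
Since $F$ is even in $X$, $\iota^{*}F=F$, while $\partial F/\partial X$ is odd in $X$. Hence $\iota^{*}\omega_S=-\omega_S$. So $\iota$ is a non-symplectic involution and $(S,\iota)$ is a $2$-elementary $K3$ surface of some type $(r,a,\delta)$.

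\textbf{Fixed locus.} The fixed points of $\iota$ in $\mathbb{P}^{3}$ are the point $P$ and the plane $\{X=0\}$.
\begin{itemize}
\item $P\notin S$, because the coefficient of $X^{4}$ is $1$.
\item $S^{\iota}$ is therefore the plane quartic curve $C=\{X=0,\ F_{4}(Y,Z,W)=0\}$.
\item $C$ is smooth. Indeed, at a point of $S$ with $X=0$ the gradient of $F$ is $(0,\nabla F_{4})$, because the $X$-derivative $4X^{3}+2XF_{2}$ vanishes there. Smoothness of $S$ therefore forces $\nabla F_{4}\neq 0$.
\end{itemize}
So $S^{\iota}$ is a single smooth curve of genus $3$ with no rational components. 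This excludes the first two cases of Proposition \ref{fixedlocus}. In the remaining case we need
\[ g=\frac{22-r-a}{2}=3 \quad\text{and}\quad \frac{r-a}{2}=0, \]
which give $r+a=16$ and $r=a$, hence $r=a=8$.

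\textbf{Determining $\delta$.} This is the main obstacle, since the fixed locus determines only $r$ and $a$. Nikulin's theory gives $\delta=0$ exactly when the fixed curve class is divisible by $2$ in $H^{2}(S,\mathbb{Z})$, equivalently when $[S^{\iota}]\in 2H^{2}(S,\mathbb{Z})$. Here $[S^{\iota}]=[C]=H$, the hyperplane class, since $C$ is the intersection of $S$ with the plane $X=0$. Because $H^{2}=4$ and $H$ is primitive on a smooth quartic (being very ample of minimal degree), $H$ is not divisible by $2$, so $\delta=1$.

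As a cross-check, the lattice $\langle 2\rangle\oplus A_{1}^{\oplus 7}$ is $2$-elementary of type $(8,8,1)$. Moreover, a $(8,8,0)$ lattice would have to be $U(2)\oplus E_8(2)$-like with signature $(1,7)$, and the parity condition $\delta=0\Rightarrow r\equiv 2 \pmod 4$ rules this out, since $8\not\equiv 2 \pmod 4$. Hence the type is $(8,8,1)$.
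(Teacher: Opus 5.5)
Your proof is correct and follows the paper's route: the normal form of Proposition~\ref{equation1ko}, the fixed locus $S^{\iota}=S\cap\{X=0\}$ being a smooth plane quartic, and Proposition~\ref{fixedlocus} forcing $r=a=8$; your checks of non-symplecticity and of the smoothness of $C$ fill in points the paper leaves implicit. For $\delta$, the paper just cites the non-existence of type $(8,8,0)$, which is exactly your parity cross-check ($\delta=0$ would force $2-r\equiv 0 \pmod 4$). Your primary argument via $[S^{\iota}]=H$ is also valid, but the reason $H\notin 2H^{2}(S,\mathbb{Z})$ is that $H=2D$ would give $D^{2}=1$ in an even lattice, not that $H$ is ``very ample of minimal degree''; and the $U(2)\oplus E_{8}(2)$ aside is off, since that lattice has rank $10$.
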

\begin{proof}
We may assume that $S$ is given by $X^{4}+F_{2}(Y, Z, W)X^{2}+F_{4}(Y, Z, W)=0$
and a generator of $G[P]$ is given by 
$\iota :[X:Y:Z:W]\mapsto [-X:Y:Z:W]$ by Proposition \ref{equation1ko}.
Then we have
\begin{align*}
S^{\iota}&=\{x \in S \mid \iota(x)=x\}\\
&=S \cap (\{X=0\}\amalg \{Y=Z=W=0\})\\
&=\{F_{4}(Y, Z, W)=0\}\\
&=C^{(3)},
\end{align*}
This implies that $r=a=8$ by Proposition \ref{fixedlocus}.
It is known that there do not exist even hyperbolic 2-elementary lattices of type $(8,8,0)$. See also \cite[page 1434]{Ni3}.
Thus $(S, \iota)$ is a 2-elementary $K3$ surface of type $(8,8,1)$.
\end{proof}

\subsection{Quasi-Galois embedding}

\begin{prop}\label{mt1-2}
Let $(S, \iota )$ be a general 2-elementary $K3$ surface of type $(8, 8, 1)$.
Then it gives a smooth quartic surface with a quasi-Galois point.
\end{prop}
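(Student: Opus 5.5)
By Proposition \ref{fixedlocus}, the fixed locus of $\iota$ consists of a single smooth curve $C$ of genus $g=(22-8-8)/2=3$ and no rational curves, since $(r-a)/2=0$. The quotient $Y:=S/\iota$ is a smooth rational surface, and the branch curve $B\subset Y$ is the image of $C$, so $B\simeq C$ has genus $3$. Because $\iota$ is non-symplectic, $K_{Y}$ is determined by $2K_{Y}+B\sim 0$, i.e.\ $B\in|-2K_{Y}|$. The invariant lattice $L=H^{2}(S,\mathbb{Z})^{\iota}$ has rank $8$, and pulling back gives $H^{2}(Y,\mathbb{Z})(2)\simeq L$, so $\rho(Y)=8$.

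I expect this to force $Y$ to be a del Pezzo surface of degree $2$, that is, $\mathbb{P}^{2}$ blown up at $7$ points. Indeed $K_{Y}^{2}=10-\rho(Y)=2$, and for a general $(S,\iota)$ in the $(20-r)=12$-dimensional moduli there are no $(-2)$-curves on $Y$ beyond what the lattice forces. Since $L$ is the type $(8,8,1)$ lattice $\langle 2\rangle\oplus\langle -2\rangle^{7}$, which is $H^{2}(Y)(2)$ for $Y=\mathbb{P}^{2}\# 7\overline{\mathbb{P}^{2}}$, generality means no extra $(-2)$-vectors become effective. The anticanonical map $|-K_{Y}|:Y\to\mathbb{P}^{2}$ is then a double cover branched along a smooth plane quartic $\Gamma=\{F_{4}(Y,Z,W)=0\}$. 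This is the classical picture, and I would cite it rather than reprove it.

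Next, set $H:=\phi^{\ast}(-K_{Y})$, where $\phi:S\to Y$ is the quotient map. Then $H^{2}=2K_{Y}^{2}=4$ and $H$ is $\iota$-invariant. I would show that $|H|$ embeds $S$ as a quartic in $\mathbb{P}^{3}$. We have $h^{0}(S,H)=4$ by Riemann--Roch, since $H$ is nef and big. Decomposing $H^{0}(S,H)=H^{0}(Y,-K_{Y})\oplus H^{0}(Y,-K_{Y}-\tfrac{1}{2}B)\cdot s$, where $s$ is a section vanishing on $C$, gives eigenspaces of dimensions $3$ and $1$, because $-K_{Y}-\tfrac{1}{2}B=-K_{Y}+K_{Y}=0$. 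Choose coordinates $Y,Z,W$ spanning the invariant part and $X$ spanning the anti-invariant part. Then $X^{2}$ is an invariant section of $2H$, so it equals a quadratic expression in $Y,Z,W$ plus a multiple of the square of the branch equation. This yields a quartic relation $X^{4}+X^{2}F_{2}+F_{4}=0$. More directly, $S\to\mathbb{P}^{2}$ factors through $Y$ as a degree-$4$ map composed of two double covers, and the anti-invariant coordinate $X$ satisfies an equation that is even in $X$.

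The main obstacle is showing that $|H|$ is very ample: it must have no base points, and it must not be hyperelliptic, i.e.\ there is no elliptic $E$ with $H\cdot E=2$ and no $(-2)$-curve $R$ with $H\cdot R=0$. I would reduce each bad class to a vector in $L$ or in its orthogonal complement and use generality. For the orthogonal complement, $(S,\iota)$ is general, so $\mathrm{NS}(S)=L$. Inside $L=\langle 2\rangle\oplus\langle -2\rangle^{7}$, with $H$ corresponding to $2(3h-\sum e_{i})$ pulled back, I would check by a finite computation with this explicit form that no vector $E$ with $E^{2}=0$ and $E\cdot H=2$ and no $(-2)$-vector orthogonal to $H$ exists. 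The same computation gives Saint-Donat's criterion.

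Once the embedding is in hand, $\iota$ acts as $X\mapsto -X$ and $S$ has the form $X^{4}+X^{2}F_{2}+F_{4}=0$. Proposition \ref{polar-hyper}, or the computation in Proposition \ref{equation1ko} read in reverse, then shows that $\pi_{[1:0:0:0]}\circ\iota=\pi_{[1:0:0:0]}$. Hence $[1:0:0:0]$ is a quasi-Galois point. It is not Galois for general $F_{2}\neq 0$, by the remark citing \cite[Corollary 6]{Yoshihara--hyper}.
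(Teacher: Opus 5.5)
Your argument is correct, but it takes a different route from the paper's. The paper works directly on $S$. It maps by $|C^{(3)}|$, invokes Saint-Donat, and excludes $C^{(3)}\cdot E=2$ by Riemann--Hurwitz for $E\to E/\iota$. Then $\iota$ fixes the hyperplane cutting out $C^{(3)}$, so it acts as $\mathrm{diag}(-1,1,1,1)$.

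You instead pass to the quotient $Y$ and use $H=\pi^{\ast}(-K_{Y})$. Since $\pi^{\ast}B=2C$ and $\mathrm{Pic}(S)$ is torsion-free, $H\sim C$, so this is the same linear system. The difference is that your $3+1$ eigenspace splitting of $H^{0}(S,H)$, and your check of Saint-Donat's conditions inside $\mathrm{NS}(S)=L=\pi^{\ast}H^{2}(Y,\mathbb{Z})$, make the hypothesis ``general'' do actual work. The equality $L=\pi^{\ast}H^{2}(Y,\mathbb{Z})$ is where $r=a$ enters, since the index is $2^{(r-a)/2}$.

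What your route buys is completeness. The paper's Hurwitz step only applies to $\iota$-stable elliptic curves; for the others there is no map $E\to E/\iota$. It also does not address base points, or $(-2)$-curves orthogonal to $C$, which $|C|$ would contract. Your lattice check handles all of these, and it is a one-line parity argument rather than a finite search:
\begin{itemize}
\item For $y\in H^{2}(Y,\mathbb{Z})$ one has $y^{2}\equiv y\cdot K_{Y}\pmod 2$, together with $(\pi^{\ast}y)^{2}=2y^{2}$ and $\pi^{\ast}y\cdot H=-2\,y\cdot K_{Y}$.
\item A class with $E^{2}=0$ and $E\cdot H=2$ would need $y^{2}=0$ and $y\cdot K_{Y}=-1$, which violates parity.
\item A class with $R^{2}=-2$ and $R\cdot H=0$ would need $y^{2}=-1$ and $y\cdot K_{Y}=0$, which also violates parity.
\item $E\cdot H$ is always even, so there are no base points.
\end{itemize}
The paper's route is shorter and never identifies $Y$.

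Some slips to fix:
\begin{itemize}
\item $H$ corresponds to $3h-\sum e_{i}$ in $H^{2}(Y,\mathbb{Z})(2)$, not $2(3h-\sum e_{i})$.
\item $X^{2}$ equals a quadric in $Y,Z,W$ plus a multiple of the extra section $w\in H^{0}(Y,-2K_{Y})$, where $w^{2}$ is the branch quartic, not its square.
\item That branch quartic is $F_{2}^{2}-4F_{4}$. The curve $\{F_{4}=0\}$ is the image of $C$, not the branch curve.
\end{itemize}

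The del Pezzo step is not needed at all. One has $h^{0}(Y,-K_{Y})=3$ for nef and big $-K_{Y}$ on a rational surface. Once $|H|$ is an embedding, $\iota$-invariance forces the quartic to be even in $X$, since an odd one is divisible by $X$. Finally, the coefficient of $X^{4}$ is nonzero, since otherwise $S$ would be singular at $[1:0:0:0]$.
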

\begin{proof}
Note that $S^{\iota}$ consists of $C^{(3)}$ by Proposition \ref{fixedlocus}.
Then there exists the rational map $\phi: S \dashrightarrow \mathbb{P}^{3}$ 
associated to the linear system $|C^{(3)}|$.
For $\phi$ to be an embedding, it is sufficient that
the intersection number of $C^{(3)}$ and any elliptic curve on $S$
is not 2 by \cite[Theorem 5.2 and Theorem 6.1]{SD}.

Let $E$ be an elliptic curve on $S$ and $m$ the intersection number of $C^{(3)}$ and $E$.
We apply the Hurwitz formula for a morphism $E\to E/\iota$ of degree 2.
Since the ramification locus is the intersections of $C^{(3)}$ and $E$, 
we have
\[2\cdot g(E)-2=2(2\cdot g(E/ \iota )-2)+m.\]
This implies that $m$ is 0 or 4.
Thus $\phi$ is an embedding.

Since $C^{(3)}$ is a fixed curve of $\iota$, it preserves $\phi$.
Thus $\iota$ induces a projective transformation $\tilde{\iota}$
which fixes the hyperplane $H$ such that $\phi^{-1}(H)=C^{(3)}$.
By replacing coordinates of $\mathbb{P}^{3}$ such that $H=\{X=0\}$, 
we may assume that 
$\tilde{\iota}$ satisfies $\tilde{\iota}([X:Y:Z:W])=[-X:Y:Z:W]$.
Since $\im \phi$ is invariant for the action of $\tilde{\iota}$,
we find that its equation is of the form in Proposition \ref{equation1ko}.
\end{proof}

The embedding $\phi$ is called a \textit{quasi-Galois embedding},
after the notion of a Galois embedding \cite{Yoshihara-embedding}. 

\begin{rem}
There exists a one-to-one correspondence between
smooth quartic surfaces with an inner (resp. outer) Galois point
and $K3$ surfaces with a certain automorphism of order 3 (resp. 4).
See also \cite{MT1, MT2}.
\end{rem}

\section{$G$-pairs}\label{G-pairs}

In this section, we determine the general equation of a quartic surface with (at least) 2 quasi-Galois points.

\begin{lem}\label{quasi-line}
Let $S$ be a smooth quartic surface, $P$ a quasi-Galois point for $S$.
Then the projective transformation induced by $\tau \in  G[P]$ preserves every line through $P$.
 In particular, if $Q\in\mathbb{P}^{3}$ and $Q\neq P$, then 
 the line $\overline{PQ}$ passing through $P$ and $Q$ satisfies  $\tau(\overline{PQ})=\overline{PQ}$.
\end{lem}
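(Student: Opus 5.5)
The plan is to use Lemma \ref{geneprojective} to view $\tau$ as an element of $PGL(4,\mathbb{C})$. The defining relation $\pi_{P}\circ\tau=\pi_{P}$ on $S$ then says that $\tau$ moves each point of $S$ along the line joining it to $P$. From this we show that $\tau$ is a homology (or the identity) with center $P$, which fixes every line through $P$ setwise.

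\emph{Step 1: $\tau$ fixes $P$.} If $P$ is Galois, then $P\notin S$ by the Remark in Section \ref{fund-qgp}, since the inner case is settled separately (and is handled by the same argument, because $\pi_{P}$ is still defined on $S\setminus\{P\}$). If $P$ is not Galois, $\tau(P)=P$ is Corollary \ref{genefixqg}. In general, the argument below yields $\tau(P)=P$ directly.

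\emph{Step 2: the line through $P$ and a general point of $S$ is preserved.} For a general point $x\in S$ (with $x\neq P$ and $\tau(x)\neq P$) we have $\pi_{P}(\tau(x))=\pi_{P}(x)$. This means $\tau(x)$ lies on the line $\overline{Px}$. The line $\overline{Px}$ meets $S$ in finitely many points, so $\tau(\overline{Px})$ is a line through $\tau(x)$, and we need it to equal $\overline{Px}$. To see this, write $\tau$ as a matrix $M$ and $P$ as a vector $p$. The condition that $Mx$, $x$ and $p$ are linearly dependent holds for all $x$ in the cone over $S$. This cone is Zariski dense in $\mathbb{C}^{4}$, because the condition is polynomial and $S$ is a degree-4 hypersurface. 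Since the locus where such a polynomial condition holds is Zariski closed, the condition therefore holds for all $x\in\mathbb{C}^{4}$, not just on $S$. So every $x$ satisfies $Mx\in\mathrm{span}(x,p)$. Equivalently, the induced linear map $\bar M$ on $\mathbb{C}^{4}/\mathbb{C}p$ sends every vector to a multiple of itself, which forces $\bar M$ to be a scalar $\lambda$. The same reasoning applied to the line $\overline{Px}$ as a whole shows that it maps into $\mathrm{span}(x,p)$ once $\tau(p)\in\mathbb{C}p$ is known.

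\emph{Step 3: conclude.} Since $\bar M=\lambda$, we can write $M=\lambda I+p\,\ell^{T}$ for some linear form $\ell$. Then $Mp=(\lambda+\ell(p))p$, so $\tau(P)=P$, and $M$ maps $\mathrm{span}(p,q)$ into itself for every $q$. Hence $\tau(\overline{PQ})=\overline{PQ}$ for every $Q\neq P$, because $M$ is invertible and so maps the 2-plane onto itself.

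The main obstacle is the density step: passing from the relation on $S$ to the identity on all of $\mathbb{C}^{4}$. The relation is a determinantal condition, namely the vanishing of the $3\times3$ minors of $[Mx\mid x\mid p]$, and this is polynomial in $x$. Each minor vanishes on the cone over $S$, so it is divisible by $F$. Since $\deg F=4>2$ while each minor has degree at most 2 in $x$, each minor vanishes identically, which settles the step.
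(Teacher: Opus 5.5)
Your proof is correct and is essentially the paper's argument written without coordinates: the paper puts $P=[1:0:0:0]$, so your $3\times 3$ minors of $[Mx\mid x\mid p]$ become exactly its quadrics $L_{1}Z-L_{2}Y$, $L_{1}W-L_{3}Y$, $L_{2}W-L_{3}Z$, which vanish identically because no nonzero quadric vanishes on the quartic $S$, and the resulting $(L_{1},L_{2},L_{3})=c(Y,Z,W)$ is your $M=\lambda I+p\,\ell^{T}$. One correction: the sentence in Step 2 claiming the cone over $S$ is Zariski dense in $\mathbb{C}^{4}$ is false (it is a hypersurface), but the divisibility-by-$F$ degree argument in your last paragraph is the right justification and is all that is needed.
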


\begin{proof}
We remark that $\tau$ is a projective transformation of $\mathbb{P}^3$ by Lemma \ref{geneprojective}. 
We may choose coordinates such that $P=[1:0:0:0]$ and $\pi_P([X:Y:Z:W])=[Y:Z:W]$.
Write
\[ \tau([X:Y:Z:W])=[L_{0}:L_{1}:L_{2}:L_{3}],\]
where the $L_{i}:=L_{i}(X,Y,Z,W)$ are linear forms. 
Since $\pi_{P}\circ \tau=\pi_{P}$ holds on $S$, we have $[L_{1}:L_{2}:L_{3}]=[Y:Z:W]$ on $S$. 
Note that no nonzero quadratic polynomial vanishes identically on $S$ because it is a hypersurface of degree $4$.
Since the equations $L_{1}Z-L_{2}Y=L_{1}W-L_{3}Y=L_{2}W-L_{3}Z=0$ hold on $S$, 
these quadratic polynomials vanish identically on $\mathbb{P}^{3}$. 
Hence there exists a nonzero constant $c$ such that $(L_{1}, L_{2}, L_{3})=c(Y, Z, W)$.
Thus $\pi_{P}\circ\tau=\pi_{P}$ holds on $\mathbb{P}^{3}$ wherever $\pi_{P}$ is defined. 
Therefore $\tau$ preserves every fiber of $\pi_{P}$, and these fibers are exactly the lines through $P$. 
Hence $\tau(\overline{PQ})=\overline{PQ}$ for every $Q\neq P$.
\end{proof}

The following is defined for curves in \cite{FMT2}.

\begin{df}
Let $S$ be a smooth quartic surface with distinct quasi-Galois points $P_{1}$ and $P_{2}$.
We call the pair $(P_{1}, P_{2})$ a \textit{$G$-pair} if
$\tau_{1}(P_{2})=P_{2}$ and $\tau_{2}(P_{1})=P_{1}$ hold
for generators $\tau_{i} \in G[P_{i}]$.
\end{df}

\begin{lem}\label{qgp-mapsto-qgp}
Let $P$ and $Q$ be quasi-Galois points
for a smooth quartic surface $S$, and $\tau \in G[P]$ be an involution.
The following holds.
\begin{enumerate}
\item The point $\tau (Q)$ is also quasi-Galois.
\item The point $\tau (Q)$ lies on the line passing through $P$ and $Q$.
\end{enumerate}
\end{lem}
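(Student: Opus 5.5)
For part (1) I will conjugate the group $G[Q]$ by $\tau$. By Lemma \ref{geneprojective}, $\tau$ is a projective transformation of $\mathbb{P}^{3}$ with $\tau(S)=S$, so $\tau$ restricts to an automorphism of $S$. Let $\sigma$ be a generator of $G[Q]$ and put $\sigma':=\tau\circ\sigma\circ\tau^{-1}\in\Aut(S)$. It has the same order as $\sigma$, so in particular $\sigma'\neq\id$.

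The aim is to show $\pi_{\tau(Q)}\circ\sigma'=\pi_{\tau(Q)}$. The key observation is that projection is equivariant under a projective transformation. The map $\tau$ sends lines through $Q$ to lines through $\tau(Q)$. So there is a projective linear isomorphism $g$ between the target planes with
\[\pi_{\tau(Q)}\circ\tau=g\circ\pi_{Q}.\]
Concretely, choose coordinates with $Q=[1:0:0:0]$ and compose with the inverse of the matrix of $\tau$; the remark after the definition of $G[P]$ shows that the choice of target hyperplane does not matter. Then
\[\pi_{\tau(Q)}\circ\sigma' = \pi_{\tau(Q)}\circ\tau\circ\sigma\circ\tau^{-1}=g\circ\pi_Q\circ\sigma\circ\tau^{-1}=g\circ\pi_Q\circ\tau^{-1}=\pi_{\tau(Q)}.\]
Hence $|G[\tau(Q)]|\ge 2$. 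I must also check that $\tau(Q)$ is not an inner point, given the paper's convention that quasi-Galois points here are not points of $S$. This is immediate: $\tau(S)=S$, so $Q\notin S$ implies $\tau(Q)\notin S$.

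For part (2) I will use Lemma \ref{quasi-line} directly. If $Q=P$, then $\tau(P)=P$ by Corollary \ref{genefixqg}, and the claim is trivial. The Galois case also works, since every $\tau\in G[P]$ fixes $P$: it preserves all lines through $P$, and $P$ is their intersection. If $Q\neq P$, Lemma \ref{quasi-line} gives $\tau(\overline{PQ})=\overline{PQ}$, so $\tau(Q)\in\overline{PQ}$.

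The only delicate point is making the equivariance identity $\pi_{\tau(Q)}\circ\tau=g\circ\pi_Q$ precise, given that projections depend on a choice of target hyperplane. I would handle this by writing $\pi_Q$ as the quotient map to the space of lines through $Q$ and noting that $\tau$ induces a linear map between these spaces.
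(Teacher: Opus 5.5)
Your proof is correct and follows essentially the paper's argument: the paper also conjugates an element $\tau'\in G[Q]$ by $\tau$. Its $\sigma=\tau^{-1}\circ\tau'\circ\tau$ coincides with your $\sigma'$ because $\tau$ is an involution, and it uses $\pi_{\tau(Q)}=\pi_Q\circ\tau$, which holds exactly in the normalized coordinates $P=[1:0:0:0]$, $\tau=\mathrm{diag}(-1,1,1,1)$ from Proposition \ref{equation1ko}; part (2) is likewise taken directly from Lemma \ref{quasi-line}. Your coordinate-free identity $\pi_{\tau(Q)}\circ\tau=g\circ\pi_Q$ simply replaces that normalization, avoids the case split according to whether $Q$ lies on $\{X=0\}$, and works for any element of $\Lin(S)$.
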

\begin{proof}
(1) We may put $P=[1:0:0:0]$ and 
$\tau=\begin{pmatrix} 
-1 & 0 & 0 & 0 \\
0 & 1 & 0 & 0 \\
0 & 0 & 1 & 0 \\
0 & 0 & 0 & 1 
\end{pmatrix}$
by Proposition \ref{equation1ko}.
Since if $Q=[0:a:b:c]$ then $\tau(Q)=Q$ holds, the assertions are immediate.
For $Q=[1:a:b:c]$, we have $\tau (Q)=[1:-a:-b:-c]$,
\[ \pi_{Q}:[X:Y:Z:W] \mapsto [Y-aX : Z-bX : W-cX] \]
and 
\[ \pi_{\tau(Q)}:[X:Y:Z:W] \mapsto [Y+aX : Z+bX : W+cX]. \]
We remark that $\pi_{\tau(Q)}=\pi_{Q}\circ \tau$, and 
there exsits $\tau' \in G[Q]$ such that $\pi_{Q}\circ \tau '=\pi_{Q}$.
If we put $\sigma:=\tau^{-1} \circ \tau ' \circ \tau$ then it is easy to see that
\begin{align*}
\pi_{\tau(Q)}\circ \sigma&=(\pi_{Q}\circ \tau) \circ (\tau^{-1} \circ \tau ' \circ \tau) \\
&=\pi_{Q}\circ  \tau ' \circ \tau\\
&= \pi_{Q}\circ  \tau\\
&=\pi_{\tau(Q)}
\end{align*}
and $\sigma$ is of order 2.
Thus $\tau (Q)$ is a quasi-Galois point.

(2) It follows from Lemma \ref{quasi-line}.
\end{proof}

Let $P, P_{1}$ be distinct quasi-Galois points for $S$ and 
$L:=\overline{PP_{1}}\simeq \mathbb{P}^{1}$ be the line passing through $P$ and $P_{1}$.
Note that involutions $\tau \in G[P]$ and $\tau_{1} \in G[P_{1}]$ preserve $L$.
Hence their restrictions
$\overline{\tau}:=\tau|_{L}$ and $\overline{\tau_{1}}:=\tau_{1}|_{L}$
are automorphisms of $L$. 
We consider their action on the effective divisor 
$D = S \cap L=R_{1}+R_{2}+R_{3}+R_{4}$ of degree 4 on $L$. 
We focus on configurations of $D$:

\begin{lem}\label{4tsuhakasanaru-c1}
If $\operatorname{Supp}D=\{ R_{1}, R_{2}, R_{3}, R_{4} \}$ then
there exists a $G$-pair.
\end{lem}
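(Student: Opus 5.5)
The plan is to work entirely on the line $L=\overline{PP_{1}}$. By Lemma \ref{quasi-line}, every $\sigma\in G[Q]$ for a quasi-Galois point $Q\in L$ preserves $L$. By Proposition \ref{equation1ko}, in suitable coordinates $\sigma=\mathrm{diag}(-1,1,1,1)$ with $Q=[1:0:0:0]$. So $\overline{\sigma}:=\sigma|_{L}$ is a nontrivial involution of $L\simeq\mathbb{P}^{1}$. Its fixed points are exactly $Q$ and the point $L\cap H_{Q}$, where $H_{Q}$ is the hyperplane fixed by $\sigma$.

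First I would reformulate the $G$-pair condition on $L$. For distinct quasi-Galois points $Q_{1},Q_{2}\in L$ with involutions $\sigma_{1},\sigma_{2}$, the condition $\sigma_{1}(Q_{2})=Q_{2}$ says $Q_{2}$ is a fixed point of $\overline{\sigma_{1}}$, hence $Q_{2}=L\cap H_{Q_{1}}$. Symmetrically $Q_{1}=L\cap H_{Q_{2}}$. It therefore suffices to produce a quasi-Galois point $Q\in L$ whose restricted involution also fixes some other quasi-Galois point $Q'$ of $L$.

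Next I would use the hypothesis on $D$. On $L$, take an affine parameter $t$ with $P$ at $t=\infty$ and $L\cap H_{P}$ at $t=0$. Then, by the form $X^{4}+X^{2}F_{2}+F_{4}$, the divisor $D$ is cut out by $t^{4}+\alpha t^{2}+\beta$. If $L\cap H_{P}$ lay on $S$, then $\beta=0$ and $D$ would have a double point. Hence, under the assumption $\operatorname{Supp}D=\{R_{1},\dots,R_{4}\}$, each restricted involution acts on $\operatorname{Supp}D$ without fixed points, as a product of two transpositions.

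Now let $\Gamma=\langle\overline{\tau},\overline{\tau_{1}}\rangle\subset\Aut(L)$ and $r:=\overline{\tau}\,\overline{\tau_{1}}$. Since $\Gamma$ preserves four distinct points of $\mathbb{P}^{1}$, it is finite, so $r$ has finite order $n$ and $\Gamma$ is dihedral. By Lemma \ref{qgp-mapsto-qgp} and its proof, applied iteratively, each point $g(P_{1})$ with $g\in\Gamma$ is quasi-Galois, and $g\overline{\tau_{1}}g^{-1}$ is its restricted involution. I would then split into cases on $n$.
\begin{itemize}
\item If $n=1$, then $\overline{\tau}=\overline{\tau_{1}}$, so $(P,P_{1})$ is already a $G$-pair.
\item If $n$ is even, then $r^{n/2}$ is a central involution commuting with $\overline{\tau_{1}}$ and distinct from it. Hence $r^{n/2}$ swaps the two fixed points of $\overline{\tau_{1}}$, so $Q:=r^{n/2}(P_{1})$ is the other fixed point of $\overline{\tau_{1}}$. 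It is quasi-Galois, with restricted involution $r^{n/2}\overline{\tau_{1}}r^{-n/2}=\overline{\tau_{1}}$. Thus $(P_{1},Q)$ is a $G$-pair.
\end{itemize}

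The main obstacle is excluding odd $n\geq3$, and this is where the distinctness of the $R_{i}$ is really used. An element of order $n\geq3$ in $PGL(2,\mathbb{C})$ has two fixed points, and all its other orbits have size $n$. For it to preserve four points we need $n=3$, with exactly one point $R\in\operatorname{Supp}D$ fixed by $r$ and the other three forming an orbit. A reflection $\overline{\tau}$ normalizes $\langle r\rangle$, so it preserves the fixed-point set of $r$ as well as $\operatorname{Supp}D$. Therefore $\overline{\tau}$ fixes $R$, contradicting the fixed-point-freeness shown above. This rules out odd $n$ and completes the plan.
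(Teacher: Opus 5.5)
Your proof is correct, and its skeleton matches the paper's. Both show that $\overline{\tau}$ and $\overline{\tau_{1}}$ are nontrivial involutions of $L$ with no fixed point on $\operatorname{Supp}D$, deduce that they commute, and conjugate $\tau_{1}$ to get the second point of the $G$-pair. The routes differ in two places.

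First, for fixed-point-freeness you use the normal form $t^{4}+\alpha t^{2}+\beta$ to see that $L\cap H_{P}\notin S$. The paper argues combinatorially: an involution of a four-element set with one fixed point has a second one, and a M\"obius involution fixing three points is trivial.

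Second, and more substantially, the paper observes that fixed-point-free involutions of a four-element set are double transpositions, which lie in the abelian group $V_{4}\subset\mathfrak{S}_{4}$. Since $\Gamma$ acts faithfully on $\operatorname{Supp}D$, this gives $\overline{\tau}\,\overline{\tau_{1}}=\overline{\tau_{1}}\,\overline{\tau}$, i.e.\ $n\le 2$, immediately. Your dihedral case analysis reaches the same place more slowly: the odd case and $n=4$ never occur. In the only nontrivial surviving case $n=2$, your point $Q=r(P_{1})=\overline{\tau}(P_{1})$ is exactly the paper's $P_{2}=\tau(P_{1})$, with involution $\tau\tau_{1}\tau$.

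Your organization absorbs into the case split the situation $\tau(P_{1})=P_{1}$, which the paper handles separately. Your central-involution argument is also not tied to the number four. The paper's version is simply shorter.

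One wording point: your sentence ``it suffices to produce $Q$ whose restricted involution also fixes some other quasi-Galois point $Q'$'' states only one of the two $G$-pair conditions. Your even-$n$ construction does verify both, since the restricted involution at $Q$ is $\overline{\tau_{1}}$, which fixes $P_{1}$. So nothing is lost.
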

\begin{proof}
Let $\tau \in G[P]$ be an involution. We set $P_{2}:=\tau (P_{1})$.
We remark that it is a quasi-Galois point for $S$ and 
lies on the line $L$ by Lemma \ref{qgp-mapsto-qgp}.

Assume that $P_{2} \neq P_{1}$. 
Then $\overline{\tau}$ and $\overline{\tau_1}$ are non-trivial. 
Indeed, if $\overline{\tau_{1}}=\id_L$, then $L$ would be pointwise fixed by $\tau_{1}$.
 By Proposition \ref{equation1ko}, after a projective change of coordinates, 
 $\tau_{1}$ is represented by $ [X:Y:Z:W]\mapsto[-X:Y:Z:W]$,
whose fixed locus in $\mathbb{P}^{3}$ is the union of the point $P_{1}$ and 
a plane not containing $P_{1}$.
Since $L$ contains $P_{1}$, it cannot be pointwise fixed by $\tau_{1}$, a contradiction.

We claim that neither $\overline{\tau}$ nor $\overline{\tau_{1}}$ fixes a point of the four-point set 
$\operatorname{Supp}D$.
Suppose, for example, that $\bar{\tau}(R_{i})=R_{i}$ for some $i$. 
Since $\overline{\tau}$ is an involution preserving $\operatorname{Supp}D$, 
its induced permutation on $\operatorname{Supp}D$ has another fixed point $R_{j} \neq R_{i}$. 
On the other hand, $\overline{\tau}$ fixes $P$. 
Thus $\overline{\tau}$ fixes the three distinct points $P, R_{i},R_{j}$. 
Hence $\overline{\tau}=\id_{L}$, which is a contradiction. 
Thus $\overline{\tau}$ has no fixed point on $\operatorname{Supp}D$. 
By the same argument, since $\overline{\tau_{1}}$ fixes $P_{1}$, 
the involution $\overline{\tau_{1}}$ also has no fixed point on $\operatorname{Supp}D$.

The permutations induced by $\overline{\tau}$ and $\overline{\tau_{1}}$ on
$\operatorname{Supp}D$ are both of type $(2,2)$ (i.e., the product of two disjoint transpositions)
 in the symmetric group $\mathfrak{S}_{4}$.
The three permutations of type $(2,2)$, together with the identity, form the Klein four-group $V_{4}$. 
In particular, the induced permutations commute. Hence we have
$\overline{\tau}\circ \overline{\tau_{1}}=\overline{\tau_{1}}\circ \overline{\tau}$ on $\operatorname{Supp}D$.
Since these are projective transformations of $L \simeq \mathbb{P}^{1}$ and
agree on at least three distinct points, they agree on all of $L$.
 Thus $\overline{\tau}\circ \overline{\tau_{1}}=\overline{\tau_{1}}\circ \overline{\tau}$.
Since $P_{2}=\tau(P_{1})$ and $\tau_{1}(P_{1})=P_{1}$, we obtain
\[ \tau_{1}(P_{2})=\tau_{1}(\tau (P_{1}))=\tau(\tau_{1}(P_{1}))=\tau(P_{1})=P_{2}.\]

Let $\overline{P_{2}R}$ be the line passing through $P_{2}$ and a point $R \in \mathbb{P}^{3}$.
Since $\tau(\overline{P_{2}R})=\overline{P_{1}\tau(R)}$ holds 
and $\tau_{1}$ preserve a line passing through $P_{1}$, 
the point $\tau_{1}(\tau(R))$ lies on the same line $\overline{P_{1}\tau(R)}$.
Since $\tau (\overline{P_{1}\tau(R)})=\overline{P_{2}R}$ holds, the point
$\tau(\tau_{1}(\tau(R)))$ lies on the first line $\overline{P_{2}R}$.
This implies that an automorphism $\tau \circ \tau_{1} \circ \tau \in \Aut (S)$ preserve $\pi_{P_{2}}$,
hence we have $\tau \circ \tau_{1} \circ \tau=\tau_{2}\in G[P_{2}]$.
Thus $\tau_{2}(P_{1})=P_{1}$ holds, and  we have a $G$-pair $(P_{1}, P_{2})$.

If $P_{2}=P_{1}$, then it is easy to see the pair $(P, P_{1})$ is a $G$-pair by the same argument as above.
\end{proof}

\begin{lem}\label{4tsuhakasanaru-c2}
The case where $D$ has a unique point of highest multiplicity does not occur.
\end{lem}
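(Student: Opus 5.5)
The plan is to show that if $D$ has a unique point $R$ of highest multiplicity, then the involutions $\overline{\tau}$ and $\overline{\tau_{1}}$ generate an infinite group of automorphisms of $L$. This would give infinitely many quasi-Galois points on $L$, contradicting Corollary \ref{number-finite}. The argument will not split into cases according to the multiplicity type of $D$.

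\textbf{Step 1: both restricted involutions fix $R$ and are non-trivial.} Both $\tau$ and $\tau_{1}$ preserve $S$, and they preserve $L$ by Lemma \ref{quasi-line}. Hence $\overline{\tau}$ and $\overline{\tau_{1}}$ preserve the divisor $D$ together with its multiplicities, so each must fix the unique point $R$ of highest multiplicity. As in the proof of Lemma \ref{4tsuhakasanaru-c1}, neither $\overline{\tau}$ nor $\overline{\tau_{1}}$ is the identity. By Proposition \ref{equation1ko}, the fixed locus of $\tau$ in $\mathbb{P}^{3}$ is $\{P\}$ together with a plane not containing $P$, so the line $L\ni P$ is not pointwise fixed. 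The same holds for $\tau_{1}$.

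\textbf{Step 2: identify the fixed points and normalize.} A non-trivial involution of $L\simeq\mathbb{P}^{1}$ has exactly two fixed points. In this paper quasi-Galois points are outer, so $P,P_{1}\notin S$ and therefore $R\neq P$ and $R\neq P_{1}$. It follows that the fixed point set of $\overline{\tau}$ is $\{P,R\}$ and that of $\overline{\tau_{1}}$ is $\{P_{1},R\}$. Choose an affine coordinate $x$ on $L$ with $R=\infty$, $P=p$ and $P_{1}=p_{1}$, where $p\neq p_{1}$. An involution of $\mathbb{P}^{1}$ fixing $\infty$ and $p$ is $x\mapsto 2p-x$. Hence $\overline{\tau}(x)=2p-x$ and $\overline{\tau_{1}}(x)=2p_{1}-x$, and their composition $\overline{\tau}\circ\overline{\tau_{1}}$ is the translation $x\mapsto x+2(p-p_{1})$, which has infinite order.

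\textbf{Step 3: produce infinitely many quasi-Galois points.} By Lemma \ref{qgp-mapsto-qgp}, applying $\tau$ or $\tau_{1}$ to a quasi-Galois point yields a quasi-Galois point lying on $L$. Iterating, every point $(\tau\circ\tau_{1})^{n}(P)$ with $n\in\mathbb{Z}$ is a quasi-Galois point on $L$; its coordinate is $p+2n(p-p_{1})$. These points are pairwise distinct, so $S$ would have infinitely many quasi-Galois points, contradicting Corollary \ref{number-finite}.

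The main thing to check is that Lemma \ref{qgp-mapsto-qgp} applies at every stage of the iteration. It is stated for an arbitrary pair of quasi-Galois points, so it suffices to apply it successively with $\tau\in G[P]$ and $\tau_{1}\in G[P_{1}]$, each time to the quasi-Galois point produced at the previous step. The remaining point is to confirm that $P,P_{1}\notin S$, which follows from the standing convention that quasi-Galois points are not inner Galois points.
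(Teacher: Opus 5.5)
Your proof is correct and follows the paper's argument. Both restricted involutions fix $R$; setting $R=\infty$ turns them into $x\mapsto 2p-x$ and $x\mapsto 2p_{1}-x$, whose composition is a translation of infinite order. The differences are minor: you rule out $\overline{\tau}=\overline{\tau_{1}}$ by comparing the fixed-point sets $\{P,R\}\neq\{P_{1},R\}$ instead of the paper's three-fixed-points case, and you get the contradiction from the infinite orbit $(\tau\circ\tau_{1})^{n}(P)$ of quasi-Galois points and Corollary \ref{number-finite}, whereas the paper uses directly that $\langle\overline{\tau},\overline{\tau_{1}}\rangle$ is finite because $\Lin(S)$ is — the same underlying finiteness.
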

\begin{proof}
Put $\overline{\tau}:=\tau |_{L}$ and  $\overline{\tau_{1}}:=\tau_{1} |_{L}$.
These are non-trivial involutions of $L\simeq \mathbb{P}^{1}$. 
Since $\Lin (S)$ is finite by \cite[Theorem 1]{MatsumuraMonsky},  
the subgroup
$\langle \overline{\tau}, \overline{\tau_{1}} \rangle \subset PGL(2,\mathbb{C})$
is also finite.
We first claim that $\overline{\tau}$ and $\overline{\tau_{1}}$ have no common fixed point in $\operatorname{Supp}D$. 
Suppose that a point $R \in\operatorname{Supp}D$ is fixed by both $\overline{\tau}$ and $\overline{\tau_{1}}$.

First, we assume $\overline{\tau} \neq \overline{\tau_{1}}$. 
Taking an affine coordinate $z$ on $L$ such that $R=\infty$, we may write
\[ \overline{\tau}(z)=-z+c,\qquad \overline{\tau_{1}}(z)=-z+d \]
for some $c,d \in \mathbb{C}$. 
Since $\overline{\tau} \neq \overline{\tau_{1}}$, we have $c\neq d$. 
Hence their composition is a non-trivial translation:
$\overline{\tau} \circ \overline{\tau_{1}}(z)=z+(c-d)$,
which has infinite order. This contradicts the finiteness of
$\langle \overline{\tau}, \overline{\tau_{1}} \rangle$.

Next, we assume $\overline{\tau} = \overline{\tau_{1}}$. 
Since $\tau\in G[P]$ and $\tau_{1}\in G[P_{1}]$, these involutions fix $P$ and $P_{1}$, respectively. 
It also fixes $R$ by assumption. Since $P$ and $P_{1}$ are not contained in $S$, the three points
$P$, $P_{1}$, $R$ are distinct. 
Thus $\overline{\tau} = \overline{\tau_{1}}$ fixes three distinct points of $L$, hence it is the identity.
This contradicts the fact that $\overline{\tau}$  and $\overline{\tau_{1}}$ are non-trivial.

Therefore $\overline{\tau}$ and $\overline{\tau_{1}}$ have no common fixed point in $\operatorname{Supp}D$.

Now we suppose that $D$ has a unique point $R_{1}$ of highest multiplicity.
Since both $\overline{\tau}$ and $\overline{\tau_{1}}$ preserve the divisor $D$, they must fix $R_{1}$.
 This contradicts the claim above. 
Hence the configurations
\[ D = 2R_{1} + R_{2} + R_{3}, \quad 3R_{1} + R_{2}, \quad 4R_{1} \] 
do not occur.
\end{proof}

\begin{lem}\label{4tsuhakasanaru-c3}
If $D = 2R_{1} + 2R_{2}$ then one of the following holds:
\begin{enumerate}
\item The pair $(P,P_1)$ is a $G$-pair.
\item The group $\langle \tau,\tau_{1}\rangle$ is isomorphic to the symmetric group $\mathfrak{S}_{3}$. 
In this case, after a change of coordinates, $L=\{Y=Z=0\}$, and $S$ is defined by
\[  (X^{2}-W^{2})^{2}+(X^{2}-W^{2})F_{2}(Y,Z)+W(W^{2}+3X^{2})F_{1}(Y, Z)+F_{4}(Y, Z)=0, \]
where $F_{i}$ is homogeneous of degree $i$ and $F_{1}\neq 0$.
\end{enumerate}
\end{lem}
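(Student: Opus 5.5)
Let $\overline{\tau}:=\tau|_{L}$ and $\overline{\tau_{1}}:=\tau_{1}|_{L}$. As in the proof of Lemma \ref{4tsuhakasanaru-c1}, both are non-trivial involutions of $L\simeq\mathbb{P}^{1}$: by Proposition \ref{equation1ko} the fixed locus of $\tau$ in $\mathbb{P}^{3}$ is $P$ together with a plane not containing $P$. By the argument in the proof of Lemma \ref{4tsuhakasanaru-c2}, the group $\langle\overline{\tau},\overline{\tau_{1}}\rangle\subset PGL(2,\mathbb{C})$ is finite. Moreover, $\overline{\tau}$ and $\overline{\tau_{1}}$ have no common fixed point in $\operatorname{Supp}D=\{R_{1},R_{2}\}$. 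Each of them preserves $\{R_{1},R_{2}\}$. If $\overline{\tau}$ fixed both $R_{i}$, it would also fix $P$, which gives three distinct fixed points, impossible. Hence $\overline{\tau}$ swaps $R_{1},R_{2}$, and likewise $\overline{\tau_{1}}$ swaps them. Choose a coordinate $z$ on $L$ with $R_{1}=0$, $R_{2}=\infty$. Then $\overline{\tau}(z)=\alpha/z$ and $\overline{\tau_{1}}(z)=\beta/z$, so $P=\pm\sqrt{\alpha}$ and $P_{1}=\pm\sqrt{\beta}$. The composition $\overline{\tau}\circ\overline{\tau_{1}}(z)=(\alpha/\beta)z$ has finite order $m$ by finiteness.

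Split according to $\tau(P_{1})$. If $\overline{\tau}(P_{1})=P_{1}$, then $\overline{\tau}$ fixes $P_{1}$, so $\overline{\tau}$ and $\overline{\tau_{1}}$ have the same fixed set on $L$ and hence coincide. Thus $\overline{\tau_{1}}(P)=P$, and we are in case (1). This is also the case $m=1$, and $m=2$ similarly gives commuting involutions, so $\overline{\tau_{1}}(P)=\overline{\tau_{1}}\overline{\tau}(P)=\overline{\tau}\overline{\tau_{1}}(P)$. One must check directly that $\tau_{1}(P)=P$ there, using the fixed points $\pm\sqrt{\alpha}$ and $\pm\sqrt{-\alpha}$. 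Now suppose $\overline{\tau}(P_{1})\neq P_{1}$. Then $P_{2}:=\tau(P_{1})$ is a third quasi-Galois point on $L$ by Lemma \ref{qgp-mapsto-qgp}, with involution $\tau\tau_{1}\tau$. Iterating produces the whole dihedral orbit of $P_{1}$, and all these points are quasi-Galois on $L$. I would bound $m$ by showing $m=3$: the quasi-Galois points on $L$ lie off $S$. The key restriction is that each $\tau_{Q}$ acts on $\mathbb{P}^{3}$ with an isolated fixed point $Q$ and a fixed plane $H_{Q}$, and the quartic $S$ must be invariant under the whole group $\langle\tau,\tau_{1}\rangle$. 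Its image in $PGL(2)$ on $L$ is dihedral of order $2m$. Restricting to the pencil of planes through $L$, or simply using the finiteness bound on $\Lin(S)$ for orbits of quasi-Galois points, should force $m\le 3$. I expect this bounding step to be the main obstacle. My first attempt would be to compute that $\langle\tau,\tau_{1}\rangle$ acts faithfully on $L$, since an element acting trivially on $L$ and commuting appropriately must be trivial. Then the element $\tau\tau_{1}$ of order $m$ acts on the fibers $S\cap L$ and on the normal directions, and I would show that invariance of $F$ restricted to the $2$-dimensional normal data gives $m\mid 6$ with $m\neq 6$, or $m\in\{1,2,3\}$.

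For $m=3$, normalize coordinates so that $L=\{Y=Z=0\}$ with coordinates $[X:W]$, $R_{1},R_{2}=[1:\pm1]$, and the three quasi-Galois points form an $\mathfrak{S}_{3}$-orbit. Take $\tau=\mathrm{diag}(1,1,1,-1)$ up to the $(Y,Z)$-part, with $\tau_{1}$ its conjugate by the order-$3$ rotation acting on $(X,W)$ and trivially on $(Y,Z)$. Write $F=\sum_{k}G_{4-k}(X,W)F_{k}(Y,Z)$ and impose invariance under $\mathfrak{S}_{3}$ acting on binary forms in $(X,W)$. The invariant binary forms of degree $4,2,3$ vanishing appropriately at $R_{1},R_{2}$ are $(X^{2}-W^{2})^{2}$, $X^{2}-W^{2}$ and $W(W^{2}+3X^{2})$, and the degree-$1$ coefficient must vanish. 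The condition $F_{1}\neq0$ distinguishes this case from case (1), since with $F_{1}=0$ the group enlarges and the points commute.
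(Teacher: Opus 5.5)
There is a genuine gap. Your opening is fine: both restrictions swap $R_1,R_2$, so $\overline{\tau}(z)=\alpha/z$ and $\overline{\tau_1}(z)=\beta/z$, and $\overline{\tau}(P_1)=P_1$ forces $\alpha=\beta$, which is case (1). The proposal then breaks down in three places.

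\emph{The case $m=2$ is mishandled.} If $\beta=-\alpha$, then $\overline{\tau_1}(\sqrt{\alpha})=-\alpha/\sqrt{\alpha}=-\sqrt{\alpha}$ and $\overline{\tau}(\sqrt{-\alpha})=-\sqrt{-\alpha}$. So $\tau_1(P)\neq P$ and $\tau(P_1)\neq P_1$, and the ``direct check'' you postpone would fail. Thus $m=2$ does not fall under case (1); it has to be shown impossible, and nothing in your plan does this.

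\emph{The bound $m\le 3$ is not proved.} You rightly call this the main obstacle, but finiteness of $\Lin(S)$, or of the set of quasi-Galois points, only gives $m<\infty$. Your suggested first step, faithfulness of $\langle\tau,\tau_1\rangle$ on $L$, cannot be obtained as sketched. When $\tau\circ\tau_1$ has even order $2m$ on the lift of $L$, the element $(\tau\circ\tau_1)^{m}$ is a nontrivial projective transformation restricting to the identity on $L$. Excluding it is equivalent to the bound you want.

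\emph{The normalization for $m=3$ is unjustified.} You assume that $\tau_1$ acts trivially on the same complementary coordinates $(Y,Z)$ as $\tau$. That simultaneous normalization is exactly what needs an argument.

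The idea you are missing, which handles all three points, is to use the fixed planes $H,H_1$ of $\tau,\tau_1$.
\begin{itemize}
\item A point of $H\cap H_1$ on $L$ would be a common fixed point of $\overline{\tau}\neq\overline{\tau_1}$, which is impossible by the argument of Lemma \ref{4tsuhakasanaru-c2}. So $K:=H\cap H_1$ is a line disjoint from $L$.
\item In coordinates with $L=\{Y=Z=0\}$ and $K=\{X=W=0\}$, both involutions act trivially on $(Y,Z)$ and as linear reflections on $(X,W)$.
\item They generate a dihedral group $I_2(n)$ with $n=\ord(\tau\circ\tau_1)$ on the $(X,W)$-plane. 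Hence $F\in\mathbb{C}[uv,u^n+v^n,Y,Z]$, with $\{uv=0\}=\{R_1,R_2\}$ on $L$.
\item For $n\ge 5$, and for $n=4$ once the hypothesis $D=2R_1+2R_2$ kills the $u^4+v^4$ term, $F$ depends on $(u,v)$ only through $uv$. Then $(u,v)\mapsto(\lambda u,\lambda^{-1}v)$ embeds $\mathbb{C}^{\ast}$ into $\Lin(S)$, a contradiction.
\item For $n\le 2$ one gets $\overline{\tau}=\overline{\tau_1}$.
\end{itemize}
Hence $n=3$, and the stated normal form follows from the invariants $X^2-W^2$ and $W(W^2+3X^2)$ of $I_2(3)$.

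Your projective order on $L$ is $m=n$ for odd $n$ and $m=n/2$ for even $n$. So your problematic case $m=2$ is precisely the paper's $n=4$. The same torus argument is also what forces $F_1\neq 0$, not an enlargement of the group that makes the points commute.
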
 

\begin{proof}
We remark that $\overline{\tau}:=\tau |_{L}$ and $\overline{\tau_{1}}:=\tau_{1} |_{L}$ 
preserve the set $\{ R_{1}, R_{2} \}$.
We first observe that each of them exchanges $R_{1}$ and $R_{2}$.
Indeed, if $\overline{\tau}$ fixed both $R_{1}$ and $R_{2}$, then
$\overline{\tau}$ would fix the three distinct points $P$, $R_{1}$ and $R_{2}$
of $L\simeq\mathbb P^{1}$, and hence $\overline{\tau}=\id_{L}$, a contradiction.
The same argument applies to $\overline{\tau_{1}}$.

If $\overline{\tau}=\overline{\tau_{1}}$ holds then we have 
$\overline{\tau}(P_{1})=P_{1}$ and $\overline{\tau_{1}}(P)=P$. 
Hence the pair $(P,P_1)$ is a $G$-pair. 
Thus we may assume $\overline{\tau} \neq \overline{\tau_{1}}$.

Let $H$ and $H_{1}$ be the fixed planes of $\tau$ and $\tau_{1}$, respectively.
We claim that $K:=H\cap H_{1}$ is disjoint from $L$. 
Suppose otherwise. 
Then a point of $K\cap L$ is a common fixed point of $\overline{\tau}$ and $\overline{\tau_{1}}$. 
Since two distinct involutions of $\mathbb{P}^{1}$ having a common fixed point 
generate an infinite subgroup of $PGL(2,\mathbb{C})$,
we have $K\cap L=\emptyset$ by the same argument of Lemma \ref{4tsuhakasanaru-c2}.
Consequently, after choosing suitable coordinates, we may write
\[ L=\{Y=Z=0\},\qquad K=\{X=W=0\}, \]
and both $\tau$ and $\tau_{1}$ act trivially on the coordinates $Y, Z$.
Their actions on the two-dimensional vector space with coordinates $X, W$
are linear reflections.

We remark that the group $G:=\langle \tau, \tau_{1} \rangle \subset\Lin(S)$ is finite. 
Put $n:=\ord (\tau \circ \tau_{1})$.
The action of $G$ on the $(X, W)$-space is the dihedral reflection group
\[ I_{2}(n)=\left \langle \tau, \tau_{1} \mid \tau^{2}=\tau_{1}^{2}=(\tau \circ \tau_{1})^{n}=\id  \right \rangle . \]
Choose linear coordinates $u, v$ on this two-dimensional space such that
$\tau \circ \tau_{1}(u,v)=(\zeta_{n} u,\zeta_{n}^{-1}v)$,
where $\zeta_{n}$ is a primitive $n$-th root of unity.
It is easy to see the invariant ring is
$\mathbb{C}[u,v]^{I_{2}(n)}=\mathbb{C}[q_{2}, q_{n}]$, 
where $q_{2}=uv, q_{n}=u^{n}+v^{n}$.

Since $\overline{\tau}$ and $\overline{\tau_{1}}$ exchange $R_{1}$ and $R_{2}$,
their composition $\overline{\tau} \circ \overline{\tau_{1}}$ fixes $R_{1}$ and $R_{2}$. 
Therefore, in the above coordinates, $R_{1}$ and $R_{2}$ are the two eigenpoints
$[1:0]$ and $[0:1]$ of $\overline{\tau} \circ \overline{\tau_{1}}$. 
In particular, $q_{2}=uv$ vanishes simply at $R_{1}$ and $R_{2}$.
We remark that since $Y, Z$ are fixed by $G$, a defining quartic polynomial $F$ of $S$ belongs to
$\mathbb{C}[q_{2},q_{n}, Y, Z]$.
Since $D=S \cap L=2R_{1}+2R_{2}$, the restriction $F|_{L}$ has zeros 
$R_{1}$ and $R_{2}$, each with multiplicity 2.
Hence $F |_{L}$ and $q_{2}^{2}$ have the same zero divisor on $L \simeq \mathbb{P}^{1}$.
Therefore we have $F |_{L}=cq_{2}^{2}$ for some $c \in \mathbb{C}^{\ast}$. 
After rescaling $F$, we may assume $F |_{L}=q_{2}^{2}$.

We determine $n$.
If $n > 4$, no term containing $q_{n}$ can occur in the quartic defining polynomial $F$. 
If $n=4$, the only possible additional term is a constant
multiple of $q_4$, but its coefficient must vanish because
$F |_{L}=q_{2}^{2}$ has exactly the two zeros $R_{1}$ and $R_{2}$, each with multiplicity 2.
Hence in either case the equation of $S$ has the form
\[ q_{2}^{2}+q_{2}A_{2}(Y, Z)+A_{4}(Y, Z)=0.\]
But then it is invariant under the one-dimensional group
\[ (u,v,Y,Z) \mapsto (\lambda u,\lambda^{-1}v,Y,Z), \ (\lambda \in \mathbb{C}^{\ast}) \]
because $q_{2}=uv$ is invariant.
This gives an infinite subgroup of $\Lin(S)$, a contradiction.
If $n=2$, then $\tau \circ \tau_{1}$ acts as $(X, W) \mapsto (-X, -W)$ on the $(X,W)$-space. 
Hence its projective action on $L$ is the identity, so
$\overline{\tau}=\overline{\tau_{1}}$.
This is contrary to our assumption. 
Therefore the only remaining possibility is $n=3$.

We consider the case of $G\simeq I_{2}(3)\simeq \mathfrak{S}_{3}$.
For the dihedral reflection group $I_2(3)$, we may choose coordinates
$u,v$ such that its invariant ring is generated by $uv$ and $u^{3}+v^{3}$.
Setting $u=W+X$ and $v=W-X$, and rescaling the generators if necessary,
$q_{2}=X^{2}-W^{2}$ and $q_{3}=W(W^{2}+3X^{2})$.
Since a $G$-invariant quartic has degree 4, it is necessarily of the form
\[ q_{2}^{2}+q_{2}F_{2}(Y, Z)+q_{3}F_{1}(Y, Z)+F_4(Y, Z)=0.\]
Therefore we have
\[ (X^{2}-W^{2})^{2}+(X^{2}-W^{2})F_{2}(Y, Z)+W(W^{2}+3X^{2})F_{1}(Y, Z)+F_{4}(Y, Z)=0.\]

Finally, we see $F_{1}\neq 0$. 
If $F_{1}=0$ holds then the equation again depends on $X,W$ only through $q_{2}=X^{2}-W^{2}$.
Since 
\[ g_{\lambda} : \left (u, v, Y, Z \right ) \mapsto \left ( \lambda u, \lambda^{-1}v, Y, Z \right ) \]
acts on $S$, $\{g_{\lambda} \}_{\lambda \in \mathbb{C}^{\ast}}$ 
is a one-dimensional subgroup of $\Lin(S)$.
This contradicts the finiteness of $\Lin(S)$.
\end{proof}

\begin{prop}\label{equation2ko}
Let $S$ be a smooth quartic surface with two distinct quasi-Galois points.
Then the homogeneous equation of $S$ is 
\[X^{4}+Y^{4}+
cX^{2}Y^{2}+X^{2}F_{2}(Z, W)+
Y^{2}G_{2}(Z, W)+F_{4}(Z, W)=0, \]
or 
\[  ( X^{2}-W^{2})^{2}+(X^{2}-W^{2})F_{2}(Y,Z)+W(W^{2}+3X^{2})F_{1}(Y, Z)+F_{4}(Y, Z)=0, \]
(up to projective equivalence).
Here $F_{d}$ and $G_{d}$ are homogeneous polynomials of degree $d$, and $c \in \mathbb{C}$.
\end{prop}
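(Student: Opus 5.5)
Let $P$ and $P_{1}$ be the two distinct quasi-Galois points, with involutions $\tau\in G[P]$ and $\tau_{1}\in G[P_{1}]$. The first step is to reduce to the case where both points are not Galois. If one of them is Galois, \cite[Corollary 6]{Yoshihara--hyper} gives $S$ in the form $X^{4}+F_{4}(Y,Z,W)=0$, and then $G[P]$ contains an involution of the same diagonal type as in Proposition \ref{equation1ko}. So in all cases I work with involutions $\tau,\tau_{1}$ that are reflections, each fixing its point and a plane not through it. Put $L:=\overline{PP_{1}}$ and $D:=S\cap L$. By Lemma \ref{quasi-line}, both $\tau$ and $\tau_{1}$ preserve $L$, so they act on $D$. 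I split into the possible configurations of $D$. Lemma \ref{4tsuhakasanaru-c2} excludes every configuration with a unique point of highest multiplicity. The remaining cases are four distinct points, handled by Lemma \ref{4tsuhakasanaru-c1}, and $D=2R_{1}+2R_{2}$, handled by Lemma \ref{4tsuhakasanaru-c3}. In the $\mathfrak{S}_{3}$ alternative of Lemma \ref{4tsuhakasanaru-c3} we get exactly the second equation in the statement. So it remains to show that whenever $S$ has a $G$-pair $(Q_{1},Q_{2})$, the equation takes the first form.

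Now let $(Q_{1},Q_{2})$ be a $G$-pair with generators $\sigma_{1},\sigma_{2}$. By Corollary \ref{genefixqg} and Proposition \ref{equation1ko}, choose coordinates with $Q_{1}=[1:0:0:0]$ and $\sigma_{1}=\mathrm{diag}(-1,1,1,1)$. Its fixed locus is $\{Q_{1}\}\cup\{X=0\}$. Since $\sigma_{1}(Q_{2})=Q_{2}$ and $Q_{2}\neq Q_{1}$, the point $Q_{2}$ lies on the plane $\{X=0\}$. Change coordinates inside that plane so that $Q_{2}=[0:1:0:0]$. Next, $\sigma_{2}$ is a reflection fixing $Q_{2}$ and a plane $H_{2}\not\ni Q_{2}$, and $\sigma_{2}(Q_{1})=Q_{1}$ forces $Q_{1}\in H_{2}$. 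The key step is to show that $\sigma_{1}$ and $\sigma_{2}$ commute. The conjugate $\sigma_{2}\sigma_{1}\sigma_{2}^{-1}$ is a reflection with isolated fixed point $\sigma_{2}(Q_{1})=Q_{1}$, and it preserves $\pi_{Q_{1}}$ (argue as in the proof of Lemma \ref{4tsuhakasanaru-c1}). So it lies in $G[Q_{1}]$, which has order $2$ because $Q_{1}$ is not Galois. Hence $\sigma_{2}\sigma_{1}\sigma_{2}=\sigma_{1}$.

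Commuting reflections are simultaneously diagonalizable. So $\sigma_{2}$ preserves the $(-1)$-eigenline $Q_{1}$ and the plane $\{X=0\}$ of $\sigma_{1}$, and inside $\{X=0\}$ its $(-1)$-eigenvector is $Q_{2}$. Hence $H_{2}$ is spanned by $Q_{1}$ and a plane $\Pi\subset\{X=0\}$ not containing $Q_{2}$. Choose $Z,W$ so that $\Pi\cap\{X=0\}=\{X=Y=0\}$. Then $\sigma_{2}=\mathrm{diag}(1,-1,1,1)$. A quartic invariant under both sign changes is even in $X$ and in $Y$, so it has the form
\[ aX^{4}+bY^{4}+cX^{2}Y^{2}+X^{2}F_{2}(Z,W)+Y^{2}G_{2}(Z,W)+F_{4}(Z,W). \]
Smoothness forces $a,b\neq0$; otherwise $Q_{1}$ or $Q_{2}$ lies on $S$, contradicting Corollary \ref{genefixqg}. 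Rescaling $X$ and $Y$ then gives the first equation.

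I expect the main obstacle to be the bookkeeping in the case split. Lemmas \ref{4tsuhakasanaru-c1}--\ref{4tsuhakasanaru-c3} produce a $G$-pair that need not be $(P,P_{1})$ itself: Lemma \ref{4tsuhakasanaru-c1} may give $(P_{1},\tau(P_{1}))$. That is harmless, since only the existence of some $G$-pair matters. The Galois case also needs care: there $G[Q_{1}]$ has order $4$, so the conjugation argument must be redone using the cyclic order-4 group and its unique involution.
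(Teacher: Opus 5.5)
Your proof is correct, and its skeleton is the paper's. Lemmas \ref{4tsuhakasanaru-c1}--\ref{4tsuhakasanaru-c3} reduce everything to either the $\mathfrak{S}_{3}$ equation or the existence of some $G$-pair, and that $G$-pair is then put in normal form. The only difference is in that normalization.

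The paper does not prove commutation as a separate step. With $Q_{1}=[1:0:0:0]$ and $Q_{2}=[0:1:0:0]$, the involution of $G[Q_{2}]$ is a reflection centred at $Q_{2}$, so it has the shape $A_{2}$ (only the second row differs from the identity). The condition $\sigma_{2}(Q_{1})=Q_{1}$ means $Q_{1}$ lies on its fixed plane, which gives $r_{1}=0$. The explicit matrix $B$ then diagonalizes both reflections at once; it amounts to the change $Y\mapsto Y-\alpha Z-\beta W$ that moves the line $\Pi=H_{2}\cap\{X=0\}$ to $\{X=Y=0\}$. In other words, the two halves of the $G$-pair condition, $Q_{2}\in H_{1}=\{X=0\}$ and $Q_{1}\in H_{2}$, already give a basis in which both reflections are diagonal: $Q_{1}$, $Q_{2}$, and two points spanning $H_{1}\cap H_{2}$. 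So commutation comes for free.

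Your conjugation step ($\sigma_{2}\sigma_{1}\sigma_{2}^{-1}$ is an involution in $G[Q_{1}]$, hence equals $\sigma_{1}$) is valid but is a detour. It is the only reason you need the structure of $G[Q_{1}]$ and a separate Galois case. There it does work: by the proof of Lemma \ref{quasi-line}, the elements of $G[Q_{1}]$ are $[X:Y:Z:W]\mapsto[\alpha X+\ell(Y,Z,W):Y:Z:W]$. A finite group of such maps injects into $\mathbb{C}^{\ast}$ via $\alpha$, so it is cyclic with a unique involution.

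What your route buys in exchange is a slightly stronger statement. For the involutions, $\sigma_{2}(Q_{1})=Q_{1}$ alone forces $\sigma_{1}\sigma_{2}=\sigma_{2}\sigma_{1}$, and hence $\sigma_{1}(Q_{2})=Q_{2}$. So either half of the $G$-pair condition implies the other.
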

\begin{proof}
Assume that $(P_{1}, P_{2})$ is a $G$-pair.
We see that automorphisms which are represented by matrices
\[ \begin{pmatrix} 
-1 & 0 & 0 & 0 \\
0 & 1& 0 & 0 \\
0 & 0 & 1& 0 \\
0 & 0 & 0 & 1
 \end{pmatrix}
 \ \text{and} \ 
\begin{pmatrix} 
1 & 0 & 0 & 0 \\
0 & -1& 0 & 0 \\
0 & 0 & 1& 0 \\
0 & 0 & 0 & 1
 \end{pmatrix}\]
act on $S$ after several coordinate transformations.
We set $F[P]:=\{Q \in \mathbb{P}^{3} \mid \forall \tau\in G[P], \tau (Q)=Q \}$ for a point $P \in \mathbb{P}^{3}$.
Obviously $P_{2}\in F[P_{1}]$ and $P_{1} \in F[P_{2}]$ hold.
Since we may assume $P_{1}=[1:0:0:0]$, we have $F[P_{1}]=\{P_{1}\} \amalg \{X=0\}$.
Thus we put $P_{2}=[0:1:a:b]$.
Moreover, we can assume that $a=b=0$ by taking the coordinate transformation given by 
\[ [X:Y: Z: W] \mapsto [X:Y:Z-aY: W-bY]. \]

Then a generator $G[P_{2}]$ is represented by the matrix
\[A_{2}=\begin{pmatrix} 
1 & 0 & 0 & 0 \\
r_{1} & -1& r_{2} & r_{3} \\
0 & 0 & 1& 0 \\
0 & 0 & 0 & 1
 \end{pmatrix}.\]
 Since the plane $F[P_{2}]\setminus \{P_{2}\}$ is defined by
$r_{1}X-2Y+r_{2}Z+r_{3}W=0$ and $P_{1}\in F[P_{2}]$, we have $r_{1}=0$.
 We recall that a generator $G[P_{1}]$ is represented by the matrix
\[A_{1}=\begin{pmatrix} 
-1 & 0 & 0 & 0 \\
0 & 1& 0 & 0 \\
0 & 0 & 1& 0 \\
0 & 0 & 0 & 1
 \end{pmatrix}.\]
We take the coordinate transformation given by 
\[B=
\begin{pmatrix} 
2 & 0 & 0 & 0 \\
0 & 1& r_{2} & r_{3} \\
0 & 0 & 2 & 0 \\
0 & 0 & 0 & 2
 \end{pmatrix}
.\]
Then we have 
\[
B^{-1}A_{1}B
=
 \begin{pmatrix} 
1/2 & 0 & 0 & 0 \\
0 & 1& -r_{2}/2 & -r_{3}/2 \\
0 & 0 & 1/2 & 0 \\
0 & 0 & 0 & 1/2
 \end{pmatrix}
 \begin{pmatrix} 
-1 & 0 & 0 & 0 \\
0 & 1& 0 & 0 \\
0 & 0 & 1& 0 \\
0 & 0 & 0 & 1
 \end{pmatrix}
 \begin{pmatrix} 
2 & 0 & 0 & 0 \\
0 & 1& r_{2} & r_{3} \\
0 & 0 & 2 & 0 \\
0 & 0 & 0 & 2
 \end{pmatrix}
=
A_{1}
\]
and 
\[B^{-1}A_{2}B=
 \begin{pmatrix} 
1 & 0 & 0 & 0 \\
0 & -1& 0 & 0 \\
0 & 0 & 1& 0 \\
0 & 0 & 0 & 1
 \end{pmatrix}.\]
Hence, if $S$ has a $G$-pair then $S$ is of the form 
\[ X^{4}+Y^{4}+
cX^{2}Y^{2}+X^{2}F_{2}(Z, W)+
Y^{2}G_{2}(Z, W)+F_{4}(Z, W)=0.\]

If $S$ does not have a $G$-pair then $S$ is of the form 
\[  (X^{2}-W^{2})^{2}+(X^{2}-W^{2})F_{2}(Y,Z)+W(W^{2}+3X^{2})F_{1}(Y, Z)+F_{4}(Y, Z)=0, \]
by Lemma \ref{4tsuhakasanaru-c1}--\ref{4tsuhakasanaru-c3}.
\end{proof}

In fact, there exist quartic surfaces that do not possess a $G$-pair.

\begin{claim}\label{G-pair-nashi}
Let $S$ be a smooth quartic surface defined by
\[  (X^{2}-W^{2})^2+W(W^{2}+3X^{2})Y+Y^4+Y^{3}Z+Z^{4}=0.\]
\begin{enumerate}
\item[(I)] $S$ has exactly three quasi-Galois points $[1: 0:0:0], [1:0:0: \pm \eta]$,
where $\eta^{2}=-3$.
\item[(II)] No two of these three points form a $G$-pair.
\end{enumerate}
\end{claim}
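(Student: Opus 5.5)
The plan is to use the polar criterion, Proposition \ref{polar-hyper}, to find all quasi-Galois points, and the $\mathfrak{S}_{3}$-symmetry from Lemma \ref{4tsuhakasanaru-c3} to cut down the casework. Write
\[
F=q_{2}^{2}+q_{3}Y+Y^{4}+Y^{3}Z+Z^{4},\qquad q_{2}=X^{2}-W^{2},\quad q_{3}=W(W^{2}+3X^{2}).
\]
This is the equation of Lemma \ref{4tsuhakasanaru-c3} (2) with $F_{2}=0$, $F_{1}=Y$ and $F_{4}=Y^{4}+Y^{3}Z+Z^{4}$. Before anything else I will check smoothness by a direct gradient computation, and also that no point of $S$ is Galois, so that Proposition \ref{equation1ko} applies to every quasi-Galois point.

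\textbf{Existence of the three points.} The dihedral group $I_{2}(3)\simeq\mathfrak{S}_{3}$ acting on the $(X,W)$-plane preserves $q_{2}$ and $q_{3}$, so it acts on $S$. It contains three reflections:
\begin{itemize}
\item $X\mapsto -X$, whose isolated fixed point in $\mathbb{P}^{3}$ is $[1:0:0:0]$;
\item its two conjugates under the order-$3$ rotation.
\end{itemize}
I will write the rotation explicitly: in $u=W+X$, $v=W-X$ it is $(u,v)\mapsto(\omega u,\omega^{-1}v)$. Then I compute the $(-1)$-eigenvectors of the conjugate reflections and expect to get $[1:0:0:\pm\eta]$ with $\eta^{2}=-3$. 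Each of these reflections has the shape of Proposition \ref{equation1ko} after a change of coordinates: its center is the isolated fixed point and its fixed plane is complementary to it. By the computation in that proposition (the converse direction, $\pi_{P}\circ\tau=\pi_{P}$), these three points are quasi-Galois. Alternatively one can verify $D^{3}_{P}F\mid D_{P}F$ directly.

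\textbf{No other quasi-Galois points.} Let $P=[a:b:c:d]$ be arbitrary. Then $D^{3}_{P}F$ is a linear form $\ell_{P}$, and by Proposition \ref{polar-hyper} the point $P$ is quasi-Galois if and only if the cubic $D_{P}F$ vanishes identically on the plane $\ell_{P}=0$. I will expand both polars. Since $Z$ enters only through $Y^{3}Z+Z^{4}$, the coefficients of the monomials in $Z$ after restricting to $\ell_{P}=0$ should quickly force $c=0$, and then $b=0$; the $Y^{3}$ term together with the $q_{3}Y$ coupling to $(X,W)$ should be what kills $b$. That reduces the problem to $P=[a:0:0:d]$. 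There $\ell_{P}$ involves only $X$ and $W$, and the condition becomes a binary problem in $(a,d)$, whose solutions I expect to be exactly $d=0$ and $d/a=\pm\eta$. Solving this polynomial system, and in particular eliminating $b,c$ cleanly, is the step I expect to be the main obstacle. If the direct expansion gets messy, I would instead argue via $\Lin(S)$: every quasi-Galois point yields a reflection in $\Lin(S)$, and I would show that any such reflection must preserve the $(Y,Z)$ part, using the special shape of $F_{4}$ and the term $q_{3}Y$.

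\textbf{Part (II).} Let $\tau$ be the generator of $G[[1:0:0:0]]$, namely $X\mapsto -X$. Then
\[
\tau([1:0:0:\eta])=[-1:0:0:\eta]=[1:0:0:-\eta]\neq[1:0:0:\eta],
\]
so $([1:0:0:0],[1:0:0:\pm\eta])$ is not a $G$-pair. The remaining pair $([1:0:0:\eta],[1:0:0:-\eta])$ is the image of one of these pairs under the order-$3$ rotation in $\Lin(S)$, which permutes the three points and conjugates the corresponding groups $G[\cdot]$. Hence it is not a $G$-pair either. This is consistent with Lemma \ref{4tsuhakasanaru-c3} (2), where $\langle\tau,\tau_{1}\rangle\simeq\mathfrak{S}_{3}$.
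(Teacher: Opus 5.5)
Your strategy for (I) is the paper's: apply Proposition~\ref{polar-hyper}, force $c=0$, then $b=0$, then solve in $(a,d)$. Your existence argument via the three reflections of $I_{2}(3)\simeq\mathfrak{S}_{3}$ is correct. Their $(-1)$-eigenvectors are indeed $[1:0:0:0]$ and $[1:0:0:\pm\eta]$, with $\eta=\omega-\omega^{2}$. Your proof of (II), conjugating by the order-$3$ rotation, is also correct. The paper does these parts differently. It reads off existence from the polar computation itself ($r\mid s$ in the last step). It proves (II) by writing $\tau,\tau_{\pm}$ explicitly and checking $\tau(P_{\pm})=P_{\mp}$, $\tau_{+}(P)=P_{-}$, $\tau_{-}(P)=P_{+}$. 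Your symmetry argument is a bit more economical there.

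\textbf{Gap: uniqueness is not yet proved.} The uniqueness half of (I), which is the whole content of ``exactly three'', is only announced (``should quickly force'', ``I expect''). One claim in your sketch is also false. At $P=[a:0:0:d]$ the form $\ell_P=\tfrac16 D^{3}_{P}F=\sum_i \partial_i F(P)\,x_i$ equals $4A(aX-dW)+EY$, where $A=a^{2}-d^{2}$ and $E=\partial_Y F(P)=d(d^{2}+3a^{2})$. So it does involve $Y$, and proving $E=0$ is exactly the step that produces the three points.

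\textbf{The steps you still need to carry out.} The final step in the paper goes as follows. Write $D_PF=4(X^{2}-W^{2})r+3Ys$ with $r=aX-dW$ and $s=d(X^{2}+W^{2})+2aXW$. If $E\neq 0$, eliminating $Y$ gives $E(X^{2}-W^{2})=3As$. The $X^{2}$ and $W^{2}$ coefficients then give $E=3Ad=-E$, a contradiction, so $d=0$ or $d^{2}=-3a^{2}$.

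The earlier steps also take real work. For $c=0$ the paper splits on $\gamma=\partial_Z F(P)=b^{3}+4c^{3}$:
\begin{itemize}
\item If $\gamma=0$, the term $4cZ^{3}$ in $D_PF$ rules out divisibility.
\item If $\gamma\neq0$, substitute $Z=-(\alpha X+\beta Y+\delta W)/\gamma$ and compare the coefficients of $X^{3},XW^{2},X^{2}Y,W^{2}Y,XYW,W^{3},Y^{3}$. This needs subcases $a=0$ and $a\neq0$. The latter gives $\alpha^{2}=-3\delta^{2}$, hence $\beta=0$, hence $a\gamma^{3}=0$.
\end{itemize}
For $b=0$, substitute $Z=-(\alpha X+\beta Y+\delta W)/b^{3}$ and read off the $Y$-free coefficients of $X^{3},X^{2}W,W^{3}$. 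This gives $a=0$, $3b=4d$ and $b=-4d$.

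Your own variant, eliminating a different variable and comparing $Z$-monomials, does close the $c=0$ step when $\partial_X F(P)\neq0$. It still needs its own case split for the remaining cases.

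Also restrict to $P\notin S$, as Proposition~\ref{polar-hyper} requires. This also guarantees $\ell_P\neq0$, since $\ell_P(P)=4F(P)$.
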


\begin{proof}
(I) We set $F=(X^{2}-W^{2})^2+W(W^{2}+3X^{2})Y+Y^4+Y^{3}Z+Z^{4}$ and apply Proposition \ref{polar-hyper}.
We consider the 1-st polar hypersurface and the 3-rd polar hypersurface of $S$ at 
$P:=[a:b:c:d] \in \mathbb{P}^{3} \setminus S$.
A direct computation gives
\begin{align*}
D_{P}F={}&
4aX^{3}+(3b-4d)X^{2}W+3dX^{2}Y-4aXW^{2}+6aXWY\\
&+(b+4d)W^{3}+3dW^{2}Y+(4b+c)Y^{3}+3bY^{2}Z+4cZ^{3}
\end{align*}
and
\[ \frac{1}{6}D_{P}^{3}F=\alpha X+\beta Y+\gamma Z+\delta W, \]
where
\begin{align*}
\alpha  &=2a(2a^{2}+3bd-2d^{2}), \\
\beta  &=3a^{2}d+4b^{3}+3b^{2}c+d^{3}, \\
\gamma  &=b^{3}+4c^{3}, \\
\delta &=3a^{2}b-4a^{2}d+3bd^{2}+4d^{3}. \\
\end{align*}

First we show that $c=0$. Assume $c\neq 0$.
If $\gamma =0$ then $D_P^{3}F$ is independent of $Z$.
However the coefficient of $Z^{3}$ in $D_{P}F$ is $4c\neq 0$.
Hence $D_P^{3}F$ cannot divide $D_{P}F$.
We have $\gamma \neq 0$.
Put $L_{0}:=\alpha X+\beta Y+\delta W$ and $C_{0}:=\left.D_{P}F \right|_{Z=0}$.
Since $D_{P}F=C_{0}+3bY^{2}Z+4cZ^{3}$ and $D_P^{3}F=6(L_{0}+\gamma Z)$,
we have 
\begin{equation}\label{eq1}
\gamma^{3}C_{0}=3b\gamma^{2}Y^{2}L_{0}+4cL_{0}^{3}.
\end{equation}
by Proposition \ref{polar-hyper} and the the factor theorem.
Comparing the coefficients of 
$X^{3}$, $XW^{2}$, $X^{2}Y$, $W^{2}Y$, $XYW$, $W^{3}$ and $Y^{3}$ in (\ref{eq1}),
we obtain
\begin{align}
a\gamma^{3}&=c \alpha^{3}, \label{eq2} \\
-a\gamma^{3}&=3c \alpha \delta^{2}, \label{eq3} \\
d\gamma^{3}&=4c \alpha^{2} \beta, \label{eq4}\\
d\gamma^{3}&=4c \delta^{2} \beta, \label{eq5}\\
a\gamma^{3}&=4c\alpha \beta \delta, \label{eq6}\\
(b+4d)\gamma^{3}&=4c \delta^{3}, \label{eq7}\\
(4b+c)\gamma^{3}&=3b\gamma^{2}\beta+4c \beta^{3}. \label{eq8}
\end{align}
If $a=0$ holds then we have $c \gamma^{3}=0$ by (\ref{eq2}), (\ref{eq4}), (\ref{eq7}), (\ref{eq8}), 
definitions of $\beta$ and $\delta$.
This is a contradiction.
If $a \neq 0$  holds then we have $\alpha^{2}=-3\delta^{2}\neq 0$ by (\ref{eq2}) and (\ref{eq3}).
Note that  (\ref{eq4}) and (\ref{eq5}) imply $\beta(\alpha^{2}-\delta^{2})=0$.
Since $\alpha^{2}=-3\delta^{2}$ and $\delta \neq 0$, we obtain $\beta =0$. 
However, (\ref{eq6}) gives $a \gamma^{3}=0$, a contradiction.
Therefore we have $c=0$.

Next we show $b=0$. 
Assume $b\neq 0$. Then we have $\gamma =b^{3}\neq 0$.
Put again $L_{0}:=\alpha X+\beta Y+\delta W$ and $C_{0}:=\left.D_{P}F \right|_{Z=0}$.
Since $D_{P}F=C_{0}+3bY^{2}Z$ and $D_P^{3}F=6(L_{0}+b^{3}Z)$,
we have 
\begin{equation}\label{eq9}
b^{2}C_{0}=3\gamma^{2}Y^{2}L_{0}.
\end{equation}
by Lemma \ref{polar-hyper} and the the factor theorem.
Comparing the coefficients of $X^{3}$, $X^{2}W$ and $W^{3}$ in (\ref{eq9}),
we obtain
\[ a=0,\qquad 3b-4d=0,\qquad b+4d=0.\]
The last two equalities contradict $b\neq 0$.
Hence we may put $P=[a: 0: 0: d]$.

Since $P$ is not contained in $S$,  $F(P)=(a^{2}-d^{2})^{2} \neq 0$ holds,
especially $A:=a^{2}-d^{2} \neq 0$.
We put $r:=aX-dW$ and $s:=d(X^{2}+W^{2})+2aXW$.
Then we have 
\[ D_{P}F=4(X^{2}-W^{2})r+3Ys \]
 and 
\[ \frac{1}{6}D_{P}^{3}F=4A r+d(d^{2}+3a^{2})Y. \]
If $E:=d(d^{2}+3a^{2})\neq 0$ holds then we have 
$E(X^{2}-W^{2})=3A s$
by Proposition \ref{polar-hyper} and the the factor theorem.
Comparing the coefficients of $X^{2}$ and $W^{2}$ in the equation,
we obtain $E=3A d$ and $-E=3A d$
which is a contradiction for $E\neq 0$. 
Hence we have $E=d(d^{2}+3a^{2})= 0$.

It is easy to see that $r$ divides $s$ in both case $d=0$ and case $d^{2}+3a^{2}=0$.
Thus $D_{P}^{3}F$ divides $D_{P}F$.
This implies that quasi-Galois points of $S$ are precisely 
$[1: 0:0:0], [1:0:0: \pm \eta]$, where $\eta^{2}=-3$.

\vspace{1\baselineskip}

(II) We set $P:=[1: 0:0:0]$, $P_{+}:=[1:0:0: \eta]$ and $P_{-}:=[1:0:0: -\eta]$.
It is easy to see that generators of $G[P]$, $G[P_{+}]$ and $G[P_{-}]$
are given by 
\begin{align*}
\tau &: [X:Y:Z:W] \mapsto [-X:Y:Z:W] \\
\tau_{+} &: [X:Y:Z:W] \mapsto \left[ \frac{X+\eta W}{2}:Y:Z:\frac{-\eta X- W}{2} \right]\\
\tau_{-} &: [X:Y:Z:W] \mapsto \left[ \frac{X-\eta W}{2}:Y:Z:\frac{\eta X- W}{2} \right]
\end{align*}
respectively.
Moreover these satisfy
\[\tau (P_{+})=P_{-}, \ \tau (P_{-})=P_{+},  \quad
\tau_{+} (P)=P_{-}, \ \tau_{+} (P_{-})=P, \quad
\tau_{-} (P)=P_{+}, \ \tau_{-} (P_{+})=P. 
\]
Combined with (I), $S$ does not have a $G$-pair.
\end{proof}

\section{The Fermat quartic surface}\label{Fermat}
Let $S$ be the Fermat quartic surface $X^{4}+Y^{4}+Z^{4}+W^{4}=0$ in $\mathbb{P}^{3}$.

\begin{lem}\label{2ko0}
Let $P:=[1:a:b:c]$ be a quasi-Galois point for $S$.
Then $a=b=0$, $b=c=0$ or $c=a=0$.
\end{lem}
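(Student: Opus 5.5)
We apply Proposition \ref{polar-hyper} directly to $F=X^{4}+Y^{4}+Z^{4}+W^{4}$ at $P=[1:a:b:c]$. We have
\[ D_{P}F=4(X^{3}+aY^{3}+bZ^{3}+cW^{3}), \qquad \tfrac{1}{24}D^{3}_{P}F=X+a^{3}Y+b^{3}Z+c^{3}W. \]
Note that $P\notin S$ forces $1+a^{4}+b^{4}+c^{4}\neq 0$, and in particular the linear form $\ell:=X+a^{3}Y+b^{3}Z+c^{3}W$ is nonzero. By Proposition \ref{polar-hyper}, $P$ is quasi-Galois if and only if $\ell$ divides the cubic $G:=X^{3}+aY^{3}+bZ^{3}+cW^{3}$. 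By the factor theorem this is equivalent to
\[ G\bigl(-a^{3}Y-b^{3}Z-c^{3}W,\,Y,\,Z,\,W\bigr)=0 \]
identically in $Y,Z,W$.

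We expand $-(a^{3}Y+b^{3}Z+c^{3}W)^{3}+aY^{3}+bZ^{3}+cW^{3}$ and set every coefficient to zero. The pure cubes give
\[ a^{9}=a,\qquad b^{9}=b,\qquad c^{9}=c. \]
The mixed monomial $YZW$ gives $6a^{3}b^{3}c^{3}=0$. The terms $Y^{2}Z$, $YZ^{2}$, etc. give
\[ a^{6}b^{3}=a^{3}b^{6}=0, \]
together with the analogous relations for the pairs $(a,c)$ and $(b,c)$. Hence $a^{3}b^{3}=0$, so $ab=0$, and similarly $ac=0$ and $bc=0$. Therefore at most one of $a,b,c$ is nonzero, which is exactly the statement: $a=b=0$, $b=c=0$, or $c=a=0$. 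As a byproduct, the surviving coordinate satisfies $x^{8}=1$ or $x=0$, and the condition $1+x^{4}\neq0$ then excludes $x^{4}=-1$, leaving $x^{4}=1$ or $x=0$. This will be useful for counting later, though it is not needed for the lemma.

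There is essentially no obstacle here. The only care needed is bookkeeping: the normalization of $D^{3}_{P}F$ (the multinomial factors $3!/(i!j!k!l!)$ collapse to $24\ell$ for the Fermat polynomial), and reading off the coefficient of $Y^{2}Z$ correctly as $-3a^{6}b^{3}$.

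A point with first coordinate $0$ is not covered by the statement and would be handled by symmetry elsewhere.
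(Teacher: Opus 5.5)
Your proof is correct, but it takes a different route from the paper's.

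\textbf{The paper's route.} The paper argues group-theoretically. It uses Yoshihara's result that the Galois points of $S$ are exactly the four coordinate points $Q_{k}$. By Lemma \ref{quasi-line}, any $\tau\in G[P]$ preserves each line $\overline{PQ_{k}}$, so it must fix the unique Galois point $Q_{k}$ on that line; hence $\tau$ is diagonal. Since $\tau$ also fixes $P$ (Corollary \ref{genefixqg}), an eigenvector condition is used to exclude two nonzero entries among $a,b,c$.

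\textbf{Your route.} You apply the polar criterion (Proposition \ref{polar-hyper}) directly. For the Fermat polynomial this gives $D^{3}_{P}F=24(X+a^{3}Y+b^{3}Z+c^{3}W)$. The divisibility test then becomes a coefficient comparison. Your expansion is correct, and the $Y^{2}Z$-type coefficients $-3a^{6}b^{3}$, etc., do force $ab=bc=ca=0$.

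\textbf{What each approach buys.}
\begin{enumerate}
\item Your argument is self-contained. It needs no classification of the Galois points of $S$.
\item It gives more. The relations $x^{9}=x$ and $1+x^{4}\neq 0$ already force the surviving coordinate to satisfy $x=0$ or $x^{4}=1$. Combined with the permutation symmetry of $F$, this yields all 28 points directly. That would make Proposition \ref{surface-kara-curve} and the appeal to the Fermat quartic curve unnecessary. Separating Galois from non-Galois points still needs a separate check.
\item It avoids a small subtlety in the paper's eigenvector step. When exactly one of $a,b,c$ vanishes, the condition only gives $\tau=\mathrm{diag}(k,k,k,\mu)$ up to the position of the free entry. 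Forcing $\tau=\id$ then needs the further input that $\tau$ preserves every line through $P$, or equivalently that $P$ is the isolated fixed point of $\tau$.
\end{enumerate}
The paper's approach, in return, explains the result structurally: every $\tau\in G[P]$ must respect the configuration of the four Galois points, so the quasi-Galois points lie on the edges of the coordinate tetrahedron.
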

\begin{proof}
We remark that $S$ has exactly four Galois points
$Q_{1}=[1:0:0:0]$, $Q_{2}=[0:1:0:0]$, $Q_{3}=[0:0:1:0]$, $Q_{4}=[0:0:0:1]$
by \cite[Theorem 10, Corollary 6]{Yoshihara--hyper}.
If $P$ is Galois then the assertion holds.

Assume that $P$ is not Galois.
Since a projective linear transformation $\tau=(a_{ij})_{1\leq i,j \leq 4} \in G[P]$ 
preserves the line $\overline{PQ_{k}}$ passing through $P$ and $Q_{k}$ by Lemma \ref{quasi-line},
and $Q_{k}$ is the only Galois point on $\overline{PQ_{k}}$, $\tau$ fixes $Q_{k}$.
Thus if $i\neq j$ then we have $a_{ij}=0$.
Moreover since $\tau$ also fixes $P$ by Corollary \ref{genefixqg}, 
there exists a non-zero constant $k \in \mathbb{C}^{\ast}$ such that
\[\begin{pmatrix} 
a_{11} & 0 & 0 & 0 \\
0 & a_{22}& 0 & 0 \\
0 & 0 & a_{33}& 0 \\
0 & 0 & 0 & a_{44}
 \end{pmatrix}
 \begin{pmatrix} 
1 \\ a \\ b \\ c
 \end{pmatrix}
 =k
  \begin{pmatrix} 
1 \\ a \\ b \\ c
 \end{pmatrix}.
\]
This implies that if $a=b=0, b=c=0$ or $c=a=0$ do not hold then 
$\tau$ is the identity map as a projective transformation.
It contradicts the assumption that $P$ is quasi-Glois.
\end{proof}

\begin{prop}\label{Fermat-basyo}
Points
$[1:\zeta_{4}^{i}:0:0]$, $[1:0:\zeta_{4}^{i}:0]$, $[1:0:0:\zeta_{4}^{i}]$, $[0:1:\zeta_{4}^{i}:0]$, $[0:1:0:\zeta_{4}^{i}]$ 
and
$[0:0:1:\zeta_{4}^{i}]$ ($i=0, 1, 2, 3$)
are quasi-Galois points of $S$.
\end{prop}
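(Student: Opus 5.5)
The plan is to apply Proposition \ref{hobokore} directly, after checking that the relevant involution preserves the Fermat surface. Then I would use the symmetry of $S$ under coordinate permutations to pass from one of the six families of points to all of them.

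First I treat the points $P_{i}=[1:\zeta_{4}^{i}:0:0]$ for $i=0,1,2,3$. Consider the projective transformation
\[\tau_{i} : [X:Y:Z:W] \mapsto [(-1)^{i}Y:X:\zeta_{4}^{2-i}Z:\zeta_{4}^{2-i}W].\]
\begin{itemize}
\item It acts on $S$: substituting into $X^{4}+Y^{4}+Z^{4}+W^{4}$ gives $Y^{4}+X^{4}+\zeta_{4}^{4(2-i)}(Z^{4}+W^{4})$, and $\zeta_{4}^{4(2-i)}=1$, so the polynomial is invariant.
\item It has order $2$ as a projective transformation. Its square sends $[X:Y:Z:W]$ to $[(-1)^{i}X:(-1)^{i}Y:\zeta_{4}^{2(2-i)}Z:\zeta_{4}^{2(2-i)}W]$. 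Since $\zeta_{4}^{2(2-i)}=(-1)^{2-i}=(-1)^{i}$, this is scalar multiplication by $(-1)^{i}$, i.e.\ the identity in $PGL(4,\mathbb{C})$. It is also nontrivial.
\end{itemize}
So Proposition \ref{hobokore} applies, and each $P_{i}$ is quasi-Galois for $S$.

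For the remaining five families, I use that $S$ is invariant under every permutation of the coordinates $X,Y,Z,W$. Such a permutation $\sigma$ is an automorphism of $\mathbb{P}^{3}$ preserving $S$. If $P$ is quasi-Galois with $\tau\in G[P]$ nontrivial, then $\pi_{\sigma(P)}$ agrees with $\pi_{P}\circ\sigma^{-1}$ up to a projective transformation of the target plane (the Remark in Section \ref{fund-qgp}). Hence $\sigma\tau\sigma^{-1}$ is a nontrivial element of $G[\sigma(P)]$, and $\sigma(P)$ is quasi-Galois. This is the same conjugation argument as in Lemma \ref{qgp-mapsto-qgp}(1).

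Applying suitable transpositions to the points $[1:\zeta_{4}^{i}:0:0]$ moves the two nonzero coordinates into any pair of positions. This gives the points $[1:0:\zeta_{4}^{i}:0]$, $[1:0:0:\zeta_{4}^{i}]$, $[0:1:\zeta_{4}^{i}:0]$, $[0:1:0:\zeta_{4}^{i}]$ and $[0:0:1:\zeta_{4}^{i}]$, covering all six families.

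None of these points lies on $S$, since each has coordinate fourth powers summing to $1+1=2\neq0$. The only delicate step is the exponent bookkeeping in $\tau_{i}$, namely that $\zeta_{4}^{2(2-i)}=(-1)^{i}$, which makes $\tau_{i}^{2}$ scalar. I do not expect a genuine obstacle.
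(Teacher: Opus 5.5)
Your proposal is correct and matches the paper's proof, which simply invokes Proposition~\ref{hobokore}. You also check explicitly that $\tau_i$ preserves the Fermat quartic and that its square is a scalar, and you spell out the coordinate-permutation symmetry (conjugation as in Lemma~\ref{qgp-mapsto-qgp}(1)) that the paper leaves implicit for the other five families.
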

\begin{proof}
It follows from Proposition \ref{hobokore}.
\end{proof}

\begin{prop}\label{surface-kara-curve}
If $P:=[1:a:0:0]$ a quasi-Galois point of $S$ then
the Fermat quartic curve $C: X^{4}+Y^{4}+Z^{4}=0$ has also a quasi-Galois point $P'=[1:a:0]$.
\end{prop}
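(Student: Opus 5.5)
We plan to restrict the involution to the plane section $\{W=0\}$. The first step is to pin down the generator of $G[P]$. If $P=[1:a:0:0]$ is Galois, then $a=0$, $P=[1:0:0:0]$, and $[1:0:0]$ is Galois for $C$, so nothing needs proving. Assume therefore that $P$ is quasi-Galois but not Galois, and let $\tau$ generate $G[P]$. By Lemma \ref{geneprojective}, $\tau$ is a projective transformation preserving $S$. The argument in Lemma \ref{2ko0} does not apply directly, since it forces $\tau$ to be diagonal only when the lines $\overline{PQ_k}$ are distinct. Here $Q_1,Q_2$ lie on the same line as $P$, so we argue separately for each Galois point.

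For $k=3,4$, the line $\overline{PQ_{k}}$ contains exactly one Galois point of $S$, so $\tau$ fixes $Q_3=[0:0:1:0]$ and $Q_4=[0:0:0:1]$. For the line $L=\overline{PQ_1}=\{Z=W=0\}$, Lemma \ref{quasi-line} shows that $\tau$ preserves $L$, and $L$ contains the two Galois points $Q_1,Q_2$. Since $\tau$ permutes Galois points by Lemma \ref{qgp-mapsto-qgp}, it either fixes or swaps $Q_1,Q_2$. In both cases $\tau$ maps $\operatorname{span}(e_1,e_2)$, $e_3$ and $e_4$ to themselves up to scalars. Hence the matrix of $\tau$ is block diagonal, with a $2\times 2$ block $M$ on $(X,Y)$ and scalars $\lambda_3,\lambda_4$ on $Z$ and $W$.

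Next we use Corollary \ref{genefixqg} and Proposition \ref{equation1ko}. These say that $\tau$ is a reflection-type involution: it has eigenvalue $-1$ at $P$ and eigenvalue $1$ on a plane $H\not\ni P$. Because $e_3$ and $e_4$ are eigenvectors, $H$ contains $Q_3$ and $Q_4$. Therefore $H=\{\ell(X,Y)=0\}$ for some linear form $\ell$. It follows that $\tau$ preserves the plane $\Pi=\{W=0\}$, which contains $P$. We then set $\tau':=\tau|_{\Pi}$. This is a projective involution of $\Pi\simeq\mathbb{P}^2$ preserving $C=S\cap\Pi$, which is exactly the Fermat quartic curve $X^4+Y^4+Z^4=0$. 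Moreover $\tau'$ is nontrivial, because it acts on $\Pi$ with the isolated fixed point $P'=[1:a:0]$ and the fixed line $H\cap\Pi$.

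The final step is to check that $\tau'\in G[P']$. Lemma \ref{quasi-line} shows that $\tau$ preserves every line through $P$, and the lines through $P'$ in $\Pi$ are among these. Hence $\pi_{P'}\circ\tau'=\pi_{P'}$ on $C$, where $\pi_{P'}$ is the projection from $P'$ in $\Pi$, which is the restriction of $\pi_P$. Since $\tau'\neq\id$, we get $|G[P']|\ge 2$, so $P'$ is quasi-Galois for $C$.

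The main obstacle is showing that $\tau$ preserves $\{W=0\}$, that is, the block-diagonal structure. The step we expect to need the most care is the treatment of the line through $P$ carrying two Galois points. If the permutation argument there seems shaky, we would fall back on the explicit normal form. In that form $H\ni Q_3,Q_4$ forces $\tau(X,Y,Z,W)=(X,Y,Z,W)-2\frac{\ell}{\ell(P)}(1,a,0,0)$. Then invariance of $X^4+Y^4+Z^4+W^4$ can be checked directly, and it visibly preserves $W$.
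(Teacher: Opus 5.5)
Your argument is correct, but it takes a longer route than the paper for the key step. The paper writes $\pi_P\circ\tau=\pi_P$ as a proportionality of linear forms, $k(Y-aX,\,Z,\,W)^{T}=(\ldots)$, and reads off from the third component that the last row of $\tau$ is $(0,0,0,k)$. So $a_{41}=a_{42}=a_{43}=0$, $\tau$ preserves $\{W=0\}$, and its restriction is the upper-left $3\times 3$ block, which commutes with $\pi_{P'}$.

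Geometrically this is the observation that, by Lemma \ref{quasi-line}, $\tau$ preserves every line through $P$, and hence every plane through $P$. So the step you call the ``main obstacle'' follows immediately from the lemma you already invoke in your last paragraph. For that step you do not need:
\begin{itemize}
\item the separate Galois case,
\item Yoshihara's list of Galois points $Q_1,\dots,Q_4$,
\item the fact that $\tau$ permutes Galois points (which needs $G[\tau(Q)]=\tau G[Q]\tau^{-1}$, not Lemma \ref{qgp-mapsto-qgp} as stated),
\item the block-diagonal form.
\end{itemize}
Even within your own set-up, the block-diagonal form already gives invariance of $\{W=0\}=\operatorname{span}(e_1,e_2,e_3)$, so the detour through $H$ is redundant.

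What your route does buy is an explicit check that $\tau'$ is non-trivial, via the fixed locus $\{P'\}\cup(H\cap\Pi)$ from Proposition \ref{equation1ko}; the paper never verifies this. Strictly, you need $\tau'|_{C}\neq\id_{C}$. That holds because the smooth quartic $C$ is not contained in a point together with a line.

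The paper's computation, for its part, uses nothing specific to the Fermat surface beyond $S\cap\{W=0\}=C$. It therefore works for any smooth quartic and any plane through $P$.
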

\begin{proof}
We remark that $\pi_{P}$ and $\pi_{P'}$ are given by the followings:
\begin{align*}
\pi_{P}:[X:Y:Z:W]&\mapsto [Y-aX : Z : W]\\
\pi_{P'}:[X:Y:Z]&\mapsto [Y-aX : Z].
\end{align*}
Since there exists a projective automorphism $\tau=(a_{ij})_{1\leq i,j \leq 4}$ of $S$ 
such that $a_{11}=1$ and $\pi_{P}=\pi_{P}\circ \tau$, 
there exists a non-zero constant $k \in \mathbb{C}^{\ast}$ such that
\begin{align*}
k\begin{pmatrix} Y-aX \\ Z \\ W  \end{pmatrix}
&=\begin{pmatrix}
a_{21}X+a_{22}Y+a_{23}Z+a_{24}W-a(X+a_{12}Y+a_{13}Z+a_{14}W) \\ 
a_{31}X+a_{32}Y+a_{33}Z+a_{34}W\\ 
a_{41}X+a_{42}Y+a_{43}Z+a_{44}W
 \end{pmatrix}\\
&=\begin{pmatrix}
(a_{21}-a)X+(a_{22}-aa_{12})Y+(a_{23}-aa_{13})Z+(a_{24}-aa_{14})W\\
a_{31}X+a_{32}Y+a_{33}Z+a_{34}W\\
a_{41}X+a_{42}Y+a_{43}Z+a_{44}W
\end{pmatrix}
\end{align*}

In particular $a_{24}-aa_{14}=a_{34}=a_{41}=a_{42}=a_{43}=0$ holds.
We put $\tau':=\tau_{|C}=(a_{ij})_{1\leq i,j \leq 3}$.
Then it is an automorphism of $C$ and satisfies that $\pi_{P'}\circ \tau '=\pi_{P'}$.
\end{proof}

\begin{thm}
The Fermat quartic surface $S$ has $\delta'[4]=4$ and $\delta'[2]=24$,
hence 28 quasi-Galois points.
\end{thm}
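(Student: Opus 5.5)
The plan is to combine the classification of Galois points with Lemma \ref{2ko0}, which pins quasi-Galois points to the coordinate lines, and then to settle each coordinate line by the polar criterion of Proposition \ref{polar-hyper}. The step $\delta'[4]=4$ is already known: by \cite[Theorem 10, Corollary 6]{Yoshihara--hyper} the Galois points of $S$ are exactly $Q_{1},\dots,Q_{4}$, the coordinate vertices. So it remains to show that the non-Galois quasi-Galois points are exactly the $24$ points of Proposition \ref{Fermat-basyo}. These are pairwise distinct and none of them is a coordinate vertex, so they contribute $\delta'[2]\geq 24$, and the whole work is the reverse inclusion.

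First, reduce to a single coordinate line. Let $P$ be a quasi-Galois point that is not Galois. The Fermat equation is invariant under all permutations of the coordinates, and such a permutation sends quasi-Galois points to quasi-Galois points. So, whenever some coordinate of $P$ is nonzero, we may normalize that coordinate to be the first one and write $P=[1:a:b:c]$. Lemma \ref{2ko0} then says that at least two of $a,b,c$ vanish. Hence, after a further permutation, $P=[1:a:0:0]$ with $a\neq 0$, since $a=0$ would give the Galois point $Q_{1}$. It therefore suffices to determine which $a\in\mathbb{C}^{\ast}$ make $[1:a:0:0]$ quasi-Galois, and then to transport the answer to all six coordinate lines by symmetry.

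Next, apply Proposition \ref{polar-hyper} with $F=X^{4}+Y^{4}+Z^{4}+W^{4}$ and $P=[1:a:0:0]$, assuming $P\notin S$, i.e.\ $1+a^{4}\neq 0$. A direct computation gives
\[ D_{P}F=4X^{3}+4aY^{3}, \qquad D^{3}_{P}F=24X+24a^{3}Y. \]
By the factor theorem, $D^{3}_{P}F$ divides $D_{P}F$ if and only if $D_{P}F$ vanishes after substituting $X=-a^{3}Y$. This substitution yields $4(a-a^{9})Y^{3}$, so with $a\neq 0$ the condition is $a^{8}=1$. The roots with $a^{4}=-1$ are exactly the points lying on $S$, which are excluded. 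Hence $a^{4}=1$, i.e.\ $a=\zeta_{4}^{i}$. (Alternatively one could use Proposition \ref{surface-kara-curve} and the known quasi-Galois points of the Fermat quartic curve from \cite{FMT2}, but the polar computation is self-contained.)

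Finally, count. On each of the six coordinate lines the non-Galois quasi-Galois points are exactly the four points with the two nonzero coordinates in ratio $\zeta_{4}^{i}$. This gives $6\cdot 4=24$ points, which are precisely those of Proposition \ref{Fermat-basyo}, together with the $4$ Galois points, for a total of $28$. The main delicate point is the reduction step: Lemma \ref{2ko0} is stated only for points whose first coordinate is $1$, so one must check that the symmetry argument really covers every point of $\mathbb{P}^{3}$ (every point has some nonzero coordinate, so it does). One must also remember to discard the roots with $a^{4}=-1$, since the hypothesis $P\notin S$ of Proposition \ref{polar-hyper} is essential there.
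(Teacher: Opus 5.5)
Your proof is correct, but it settles the key step differently from the paper: deciding which $a\in\mathbb{C}^{\ast}$ make $[1:a:0:0]$ quasi-Galois. The reduction via the coordinate symmetry, Lemma \ref{2ko0} and Yoshihara's count of Galois points is the same in both.

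The paper passes to a plane section. Proposition \ref{surface-kara-curve} shows that $[1:a:0]$ is then quasi-Galois for the Fermat quartic curve $X^{4}+Y^{4}+Z^{4}=0$. The values $a=\zeta_{4}^{i}$ are then imported from the classification for that curve in \cite{FMT}.

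You instead apply the polar criterion of Proposition \ref{polar-hyper} on $S$ itself. Your computation is right: $D_{P}F=4X^{3}+4aY^{3}$, $D^{3}_{P}F=24X+24a^{3}Y$, and the remainder after substituting $X=-a^{3}Y$ is $4(a-a^{9})Y^{3}$.

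Your route is self-contained. It needs neither Proposition \ref{surface-kara-curve} nor the external curve classification. Because the criterion is an equivalence, the same computation also shows that the points with $a^{4}=1$ are quasi-Galois, so Proposition \ref{Fermat-basyo} becomes a by-product rather than an input. The paper's route instead reuses known results on plane quartics and makes explicit the link between quasi-Galois points of $S$ and those of its hyperplane section.

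One step you should justify rather than assume is $P\notin S$, which is what lets you discard the roots with $a^{4}=-1$. The reason is that a non-Galois quasi-Galois point automatically lies off $S$. For $P\in S$ one has $\deg\pi_{P}=3$, so $|G[P]|\geq 2$ forces $|G[P]|=3$, and $P$ would be Galois. This is consistent with Yoshihara's result that the only Galois points are the $Q_{k}$, none of which lies on $S$.
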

\begin{proof}
We have at least 24 quasi-Galois points by Proposition \ref{Fermat-basyo}
and exactly 4 Galois points by \cite[Theorem 10]{Yoshihara--hyper}.
We see that $S$ has no other quasi-Galois points than these.

If $S$ has a quasi-Galois point $P$ then we may put 
$P=[1:a:0:0]$, $[1:0:b:0]$, $[1:0:0:c]$, $[0:1:b:0]$, $[0:1:0:c]$ or $[0:0:1:c]$ by Lemma \ref{2ko0}.
Let $P=[1:a:0:0]$ be a quasi-Galois point of $S$.
Since $P'=[1:a:0]$ is a quasi-Galois point of the Fermat quartic curve $X^{4}+Y^{4}+Z^{4}=0$
by Proposition \ref{surface-kara-curve}, we have $a=1, \zeta_{4}, -1, \zeta_{4}^{3}$
by \cite[Proposition 3.1 (1), Proposition 3.2]{FMT}.
Thus, quasi-Galois points of $S$  are all 28 points above.
\end{proof}

\end{document}